\documentclass[11pt]{article}

\usepackage{latexsym}
\usepackage{amssymb}
\usepackage{amsthm}
\usepackage{amscd}

\usepackage{amsmath}

\usepackage{mathrsfs}
\usepackage[all]{xy}
\usepackage{hyperref} 
\usepackage[usenames,dvipsnames]{color}
\usepackage{graphicx,eepic}
\usepackage{float}

\usepackage{color}

\theoremstyle{definition}
\newtheorem* {theorem*}{Theorem}
\newtheorem{theorem}{Theorem}[section]
\newtheorem{thmdef}[theorem]{Theorem-Definition}

\theoremstyle{definition}
\newtheorem{observation}[theorem]{Observation}

\newtheorem* {example*}{Example}

\newtheorem{lemma}[theorem]{Lemma}
\theoremstyle{definition}
\newtheorem{definition}[theorem]{Definition}
\theoremstyle{definition}
\newtheorem* {notation}{Notation}

\newtheorem{proposition}[theorem]{Proposition}
\newtheorem{corollary}[theorem]{Corollary}

\newtheorem* {remark}{Remark}
\theoremstyle{definition}
\newtheorem {example}[theorem]{Example}
\theoremstyle{definition}

\theoremstyle{definition}

\theoremstyle{definition}

\xyoption{dvips}

\usepackage{fullpage}

%

\numberwithin{equation}{section}

\def\op{\mathrm{op}}
\def\modu{\ (\mathrm{mod}\ }

\def\({\left(}
\def\){\right)}
       \newcommand{\QQ}{\mathbb{Q}}    \newcommand{\cA}{\mathcal{A}}
   
   \newcommand{\cI}{\mathcal{I}}
\newcommand{\cJ}{\mathcal{J}} 

\newcommand{\cB}{\mathcal{B}}

\def\NN{\mathbb{N}}
  \def\Hom{\mathrm{Hom}} \def\End{\mathrm{End}} \def\ZZ{\mathbb{Z}} \def\Aut{\mathrm{Aut}}  
         \def\spanning{\textnormal{-span}}   \def\cB{\mathcal{B}}
  \def\wt{\widetilde}

\newcommand{\cM}{\mathcal{M}}

\newcommand{\cL}{\mathcal{L}}

\newcommand{\one}{{1\hspace{-.11cm} 1}}

\newcommand{\sgn}{\mathrm{sgn}}

\def\barr{\begin{array}}
\def\earr{\end{array}}
\def\ba{\begin{aligned}}
\def\ea{\end{aligned}}
\def\be{\begin{equation}}
\def\ee{\end{equation}}

\def\sC{\mathscr{C}}

\def\qquand{\qquad\text{and}\qquad}

\def\I{\mathbf{I}_*}

\def\act{\ltimes}
\def\ds{\displaystyle}

\def\H{\mathcal H}
\def\cM{\mathcal M}

\def\I{\mathbf{I}}

\def\ben{\begin{enumerate}}
\def\een{\end{enumerate}}

\newcommand{\genrep}[4]{{\left[\barr{l  l} #1 & #2 \\ #3 & #4 \earr\right]}}
\newcommand{\menrep}[8]{{\left[\barr{l  l} #1 & #2 \\ #3 & #4 \\ #5 & #6 \\ #7 & #8 \earr\right]}}

\makeatletter
\renewcommand{\@makefnmark}{\mbox{\textsuperscript{}}}
\makeatother

\allowdisplaybreaks[1]

\UseCrayolaColors

\begin{document}
\title{Comparing and characterizing some constructions of canonical bases  from Coxeter systems}
\author{Eric Marberg\footnote{This research was conducted with support from the National Science Foundation.}
\\
Department of Mathematics \\
Stanford University \\
{\tt emarberg@stanford.edu}}

\date{}

\maketitle

\begin{center}
Dedicated to David Vogan on the occasion of his 60th birthday.
\end{center}

\begin{abstract}
The Iwahori-Hecke algebra $\H$ of a Coxeter system $(W,S)$
has a ``standard basis'' indexed by the elements of $W$ and a ``bar involution'' given by a certain antilinear map. Together, these form an example of what Webster calls  
a pre-canonical structure, relative to which the well-known Kazhdan-Lusztig basis of $\H$ is a canonical basis. Lusztig and Vogan have  defined a representation of a modified Iwahori-Hecke algebra on the free $\ZZ[v,v^{-1}]$-module generated by the set of twisted involutions in $W$, and shown that this module has a unique  pre-canonical structure compatible with the $\H$-module structure, which admits its own  canonical basis which can be viewed as a generalization of the Kazhdan-Lusztig basis. 
One can modify the definition of Lusztig and Vogan's module to obtain other pre-canonical structures, each of which admits a unique canonical basis indexed by twisted involutions. We classify all of the pre-canonical structures which arise in this manner, and explain the relationships between their resulting canonical bases. Some of these canonical bases are equivalent in a trivial fashion to Lusztig and Vogan's construction, while others appear to be unrelated. Along the way, we also  clarify the differences between Webster's notion of a canonical basis and the related concepts of an IC basis and a $P$-kernel. 
%
\end{abstract}

\setcounter{tocdepth}{2}
\tableofcontents

\section{Introduction}

Let $(W,S)$ be a Coxeter system and write $\H$ for its associated Iwahori-Hecke algebra.
 This algebra has a ``standard basis'' indexed by the elements of $W$, whose structure constants have a simple inductive formula. The Kazhdan-Lusztig basis of $\H$ is the unique basis which is invariant under a certain antilinear map $\H \to \H$, referred to as the ``bar involution,'' and whose elements are each unitriangular linear combinations of standard basis elements with respect to the Bruhat order.
 The standard basis and bar involution of $\H$ are an example of what Webster \cite{Webster} calls  
a \emph{pre-canonical structure}, relative to which the Kazhdan-Lusztig basis is a \emph{canonical basis}.
This terminology, whose precise definition we review in Section \ref{canon-sect}, is useful for organizing several similar constructions attached to Coxeter systems.
 Webster's idea of a canonical basis is closely related to Du's notion of an \emph{IC basis} \cite{Du} and also to Stanley's notion of a \emph{$P$-kernel} \cite{StanleyPKernel}, and in Section \ref{IC-sect} we discuss the relationship between these three concepts.

In \cite{LV2,LV1,LV6}, Lusztig and Vogan study a representation of a modified Iwahori-Hecke algebra $\H_2$ on the free $\ZZ[v,v^{-1}]$-module generated by the set of twisted involutions $\I = \I(W,S)$ in a Coxeter group. (See Section \ref{twisted-sect} for the definition of this set; though we mean something more general, in this introduction one can simply take $\I = \{ w \in W: w^2=1\}$.) They show that this module has a unique pre-canonical structure which is compatible with the action of 
$\H_2$,
and that this structure admits a canonical basis, of which the Kazhdan-Lusztig basis can be viewed as a special case.

The definition of  Lusztig and Vogan's $\H_2$-representation has a particularly simple form, and gives an example of a \emph{generic $(\H_2,\I)$-structure} as defined in Section \ref{32-sect}.
It turns out that there are a number of slight modifications one can make to this definition which produce other $\H_2$-module structures 
on the free $\ZZ[v,v^{-1}]$-algebra generated by $\I$; some (but not all) of 
 these modules likewise possess a unique pre-canonical structure compatible with the action of $\H_2$; in each such case there is  a unique associated canonical basis. 
We review Lusztig and Vogan's results in Section \ref{lv-sect}, and derive from them a family of analogous theorems (along the lines just described) in Section \ref{precanon3-sect}.
 In Section \ref{precanon2-sect} we present another 
variation of these results, in which
the role of the modified Iwahori-Hecke algebra $\H_2$ is replaced by the usual algebra $\H$. 
These constructions  give three canonical bases indexed by the twisted involutions in a Coxeter group; these bases all can be seen as generalizations of the Kazhdan-Lusztig basis of $\H$, but, somewhat unexpectedly, they do not appear to be related to each other in any simple way.

In Sections \ref{16-sect} and \ref{32-sect} we describe a precise sense in which these three bases account for all canonical bases on this space.
Specifically, 
we define in Section \ref{morph-sect}  a category whose objects are pre-canonical structures on free $\ZZ[v,v^{-1}]$-modules. Our definition of morphisms in this category has the following appealing properties:
\ben
\item[(i)]  Canonical bases arising from  isomorphic  pre-canonical structures are always related in a simple way; in particular, their coefficients (when written as sums of standard basis elements) are equal up to a change of sign or the variable substitution $v \mapsto -v$; see Corollary \ref{canon-cor}.

\item[(ii)] Assume the free $\ZZ[v,v^{-1}]$-module generated by $W$ has a pre-canonical structure in which the natural basis $W$ is standard. If this structure satisfies a natural compatibility condition with an $\H$-representation on the ambient space, then it is isomorphic to the pre-canonical structure on $\H$ itself, and so it has a unique canonical basis which can be identified in the sense of (i) with the Kazhdan-Lusztig basis; see Theorem \ref{4-thm}.

\een
With respect to these definitions, our main results are as follows.
Suppose we are given a pre-canonical structure on the free $\ZZ[v,v^{-1}]$-module generated by the set of twisted involutions in $W$, in which the natural basis $\I$ is the standard one.
We prove that
\ben
\item[(1)] If the structure is compatible with any representation of $\H$ of a certain natural form, then it is isomorphic to the pre-canonical structure we define in Section \ref{precanon2-sect}; see Theorem \ref{16-thm}.

\item[(2)] If the structure is compatible (in a certain natural sense) with a representation of the modified Iwahori-Hecke algebra $\H_2$, then it is isomorphic to one of four pre-canonical structures:  the one Lusztig and Vogan define in \cite{LV2,LV1},
the one we define in Section \ref{precanon3-sect},
or one of two non-isomorphic structures derived from the one  given in Section \ref{precanon2-sect}; 
see Theorem \ref{32-thm}.

\een
These results provide some formal justification for considering the  pre-canonical structures described in Sections  \ref{lv-sect}, \ref{precanon3-sect}, and \ref{precanon2-sect} 
to be particularly natural objects.
Lusztig and Vogan have given two interpretations of the first structure, in terms of the geometry of an associated algebraic group when $W$ is a Weyl group \cite{LV1} and in terms of the theory of Soergel bimodules for general $W$ \cite{LV6}. It remains an open problem to give similar interpretations of the two other  pre-canonical structures.

\subsection*{Acknowledgements}

I thank Daniel Bump, Persi Diaconis, Richard Green, George Lusztig, David Vogan, and Zhiwei Yun for helpful discussions related to the development of this paper.

\section{Preliminaries}

\subsection{Canonical bases}
\label{canon-sect}

Throughout we let $\cA = \ZZ[v,v^{-1}]$ denote the ring of Laurent polynomials with integer coefficients  in a single indeterminant. 
We write $f \mapsto \overline {f}$ for the ring involution of $\cA$ with $v \mapsto v^{-1}$
 and say that a map $\varphi : U \to V$ between $\cA$-modules is \emph{$\cA$-antilinear} if $\varphi(fu) = \overline{f}\cdot \varphi(u)$ for $f \in \cA$ and $u \in U$.
 Let $V$ be a free $\cA$-module. 
 
 \begin{definition}\label{precanon-def}
A \emph{(balanced) pre-canonical structure} on $V$ consists of
\begin{itemize}
\item a ``bar involution''   $\psi $ given by an $\cA$-antilinear map $V \to V$
with $\psi^2=1$. 

\item a ``standard basis'' $\{a_c\}$ with partially ordered index set $(C,\leq)$ such that 
\[ \psi(a_c) \in a_c + \sum_{c' < c} \cA\cdot a_{c'}.\]
\end{itemize}
\end{definition}

This  is equivalent to Webster's definition of a \emph{balanced pre-canonical structure} \cite[Definition 1.5]{Webster}. In this work we will only consider  pre-canonical structures  which are balanced in this sense, and from this point on we  drop the adjective ``balanced'' and just refer to ``pre-canonical structures.'' The reader should note, however, that in \cite{Webster} a pre-canonical structure refers to a slightly more general construction which includes Definition \ref{precanon-def} as a special case.

Assume $V$ has a pre-canonical structure $(\psi,\{a_c\})$; we then  have this accompanying notion.

\begin{definition}\label{canon-def}
A set of vectors $\{b_c\}$ in $V$ also indexed by $(C,\leq)$ is a  \emph{canonical basis} 
if 
\begin{itemize}
\item[(C1)] each vector $b_c$ in the basis  is invariant under $\psi$.
\item[(C2)] each vector $b_c$ in the basis  is in the set $b_c = a_c + \sum_{c' < c} v^{-1}\ZZ[v^{-1}] \cdot a_{c'}$.
\end{itemize}
\end{definition}


This definition of a canonical basis is slightly different from the one which Webster gives  \cite[Definition 1.7]{Webster}, but is equivalent when the pre-canonical structure on $V$ is balanced (which we assume everywhere in this work) by \cite[Lemma 1.8]{Webster}.


\begin{example}
We view the ring $\cA$ itself as  possessing the pre-canonical structure in which  the bar involution  is the map $f \mapsto \overline f$
and the standard basis is the singleton set $\{1\}$.
This structure admits a canonical basis, which is again just $\{1\}$.
\end{example}

 The following crucial property of a canonical basis appears in  the introduction of \cite{Webster};
its elementary proof is an instructive exercise.

\begin{proposition}[Webster \cite{Webster}]\label{unique-prop} A pre-canonical structure admits at most one canonical basis.
\end{proposition}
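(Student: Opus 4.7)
The plan is to argue by contradiction: if $\{b_c\}$ and $\{b_c'\}$ are two canonical bases for the same pre-canonical structure $(\psi,\{a_c\})$, then fix any index $c \in C$ and consider the difference $d := b_c - b_c'$. By condition (C1), $d$ is $\psi$-invariant; by condition (C2), $d$ lies in the finite sum
\[ d = \sum_{c' < c} f_{c'}\, a_{c'}, \qquad f_{c'} \in v^{-1}\ZZ[v^{-1}]. \]
I want to conclude that every $f_{c'}$ vanishes, forcing $b_c = b_c'$.

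Suppose not, and let $c^\star$ be a maximal element of the (finite, hence nonempty) set $\{c' < c : f_{c'} \neq 0\}$. The strategy is to extract the coefficient of $a_{c^\star}$ on both sides of the identity $\psi(d) = d$. Using the unitriangularity of the bar involution on the standard basis from Definition \ref{precanon-def},
\[ \psi(a_{c'}) = a_{c'} + \sum_{c'' < c'} g_{c',c''}\, a_{c''} \quad \text{for some } g_{c',c''} \in \cA, \]
we compute
\[ \psi(d) = \sum_{c' < c} \overline{f_{c'}}\, a_{c'} + \sum_{c' < c} \sum_{c'' < c'} \overline{f_{c'}}\, g_{c',c''}\, a_{c''}. \]
The coefficient of $a_{c^\star}$ on the right-hand side is $\overline{f_{c^\star}}$ plus contributions $\overline{f_{c'}}\, g_{c',c^\star}$ for $c' > c^\star$; all such contributions vanish by maximality of $c^\star$. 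Comparing with $d$, we obtain $f_{c^\star} = \overline{f_{c^\star}}$.

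This is the crux: the element $f_{c^\star}$ lies in $v^{-1}\ZZ[v^{-1}]$, while its bar-image $\overline{f_{c^\star}}$ lies in $v\ZZ[v]$, and these two submodules of $\cA$ intersect only in $\{0\}$. Hence $f_{c^\star} = 0$, contradicting the choice of $c^\star$. Therefore $d = 0$ and $b_c = b_c'$ for every $c$, proving uniqueness. The only subtle point is ensuring that the sum defining $d$ is finite so that a maximal $c^\star$ exists; this is guaranteed because condition (C2) expresses each $b_c$ and $b_c'$ as a (necessarily finite) $\cA$-linear combination of standard basis vectors. No other obstacle is present: the argument is entirely formal, relying only on the elementary fact that $v^{-1}\ZZ[v^{-1}] \cap v\ZZ[v] = 0$ and on the unitriangularity built into Definition \ref{precanon-def}.
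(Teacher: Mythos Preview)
Your argument is correct and is precisely the standard elementary proof the paper has in mind; the paper does not spell out its own proof but leaves it as ``an instructive exercise,'' and the same maximal-index argument you use appears verbatim later in the paper's proof of Proposition~\ref{IC-prop}. Nothing further is needed.
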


It is usually 
 difficult to describe   elements of a canonical basis explicitly. 
 However, one can often at least guarantee that a canonical basis exists.
Continue to assume $V$ is a free $\cA$-module with a pre-canonical structure $(\psi, \{a_c\})$ whose standard basis is indexed by $(C,\leq)$. 

\begin{theorem}[Du \cite{Du}] \label{canon-thm}
If all lower intervals $(-\infty,x]  = \{c \in C : c \leq x\}$ in  the partially ordered index set $(C,\leq)$ are finite
then the pre-canonical structure on $V$ admits a canonical basis.
\end{theorem}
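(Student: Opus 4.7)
The plan is to construct the canonical basis $\{b_c\}_{c \in C}$ by induction along the partial order, where the hypothesis that each lower interval $(-\infty, c]$ is finite guarantees well-foundedness (any descending chain in $(-\infty,c]$ starting from $c$ is finite). For the base case, if $c$ is minimal in $C$ then the pre-canonical axiom forces $\psi(a_c) = a_c$, so $b_c := a_c$ satisfies both (C1) and (C2) vacuously.

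For the inductive step, fix $c$ and assume $b_{c'}$ has been constructed for every $c' < c$. Each such $b_{c'}$ is unitriangular in $\{a_{c''}\}_{c'' \leq c'}$, so $\{b_{c'} : c' < c\}$ is an $\cA$-basis of $\sum_{c' < c} \cA\, a_{c'}$. By the pre-canonical axiom, $\psi(a_c) - a_c$ lies in this submodule, so I can expand
\[
\psi(a_c) - a_c \;=\; \sum_{c' < c} s_{c'}\, b_{c'} \qquad \text{for uniquely determined } s_{c'} \in \cA.
\]
Applying $\psi$ to both sides, using $\psi^2 = 1$ together with the inductive bar-invariance $\psi(b_{c'}) = b_{c'}$, the left side becomes $-(\psi(a_c) - a_c)$, which forces $\overline{s_{c'}} = -s_{c'}$ for every $c' < c$.

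The key elementary lemma is then: any $s \in \cA$ with $\overline{s} = -s$ has the form $s = q - \overline{q}$ for some $q \in v^{-1}\ZZ[v^{-1}]$. This is immediate on inspecting coefficients: writing $s = \sum_n s_n v^n$, the hypothesis forces $s_0 = 0$ and $s_{-n} = -s_n$, so $q := \sum_{n<0} s_n v^n$ works. Choose such $q_{c'} \in v^{-1}\ZZ[v^{-1}]$ with $q_{c'} - \overline{q_{c'}} = s_{c'}$, and define
\[
b_c \;:=\; a_c + \sum_{c' < c} q_{c'}\, b_{c'}.
\]
Then $\psi(b_c) - b_c = \bigl(\psi(a_c) - a_c\bigr) - \sum_{c' < c}(q_{c'} - \overline{q_{c'}})\, b_{c'} = 0$, giving (C1). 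For (C2), substitute the inductive expressions $b_{c'} \in a_{c'} + \sum_{c'' < c'} v^{-1}\ZZ[v^{-1}]\cdot a_{c''}$; since $v^{-1}\ZZ[v^{-1}]$ is a subring of $\cA$ and is closed under $\cA$-linear combinations of its own elements via $q_{c'} \in v^{-1}\ZZ[v^{-1}]$, the element $b_c$ indeed has the form $a_c + \sum_{c' < c} v^{-1}\ZZ[v^{-1}]\cdot a_{c'}$.

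There is no genuinely hard step: the mechanism is the standard Kazhdan--Lusztig existence argument adapted to this abstract setting, and the only ingredient beyond routine bookkeeping is the elementary coefficient-splitting lemma above, which supplies the $q_{c'}$. The mild subtlety is that one needs the inductive $\psi$-invariance of the $b_{c'}$ to bootstrap the antiinvariance $\overline{s_{c'}} = -s_{c'}$ from $\psi^2 = 1$; once that is in hand, uniqueness of the canonical basis (Proposition \ref{unique-prop}) shows that the construction does not depend on any choices made along the way.
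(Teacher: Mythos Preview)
Your proof is correct and is precisely the standard Kazhdan--Lusztig existence argument that the paper alludes to when it says one ``can adapt the argument Lusztig gives in \cite[Section 4.9]{LV2}''; the paper itself only cites \cite{Du} and \cite{LV2} without spelling out the details. One tiny quibble: $v^{-1}\ZZ[v^{-1}]$ is not a subring of $\cA$ (it lacks the identity), but what you actually use---that it is closed under multiplication by elements of $\ZZ[v^{-1}]$---is true and suffices for your verification of (C2).
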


\begin{proof}
The result is equivalent to \cite[Theorem 1.2 and Remark 1.2.1(1)]{Du}.
One can also adapt the argument   Lusztig gives in \cite[Section 4.9]{LV2},
 which proves the existence of a canonical basis in one particular pre-canonical structure but  makes sense in greater generality. 
\end{proof}
 
 Webster lists several examples of pre-canonical structures from representation theory in the introduction of \cite{Webster}. 
Pre-canonical structures, such as in these examples, arise naturally from graded categorifications, by which we broadly mean    isomorphisms
\be\label{iso} V \xrightarrow{\ \sim \ } [ \sC ]\ee
where $\sC$ is an additive category with $\ZZ$-graded objects, and $[\sC]$ denotes its split Grothendieck group: this is the $\cA$-module generated by the symbols $[C]$ for objects $C \in \sC$, subject to the relations $ [A] + [B] = [C]$ whenever $A \oplus B \cong C$
%
and 
$v^n[C] = [C(n)]$ where $C(n)$ is the object $C \in \sC$ with its grading shifted down by $n$.
The bar involution of a pre-canonical structure on $V$ should then correspond via \eqref{iso} to a duality functor on $\sC$, and elements of the standard basis should arise as some set of easily located objects in $\sC$, each of which contains a unique indecomposable summand not found in smaller objects.
A canonical basis in turn should correspond to a representative set of indecomposable objects which are self-dual with respect to some choice of grading shift.

\begin{example}
The pre-canonical structure on $V=\cA$ comes from the categorification taking $\sC$ to be the category of finitely generated $\ZZ$-graded free $R$-modules (with $R$ any commutative ring), with morphisms given by grading preserving $R$-linear maps.
For this category, there is a unique ring isomorphism $\cA \xrightarrow{\sim} [\sC]$ identifying $1 \in \cA$ with $[\one] \in [\sC]$, where $\one$ denotes the graded $R$-module whose $n$th component is $R$ when $n=0$ and is $0$ otherwise.
The bar involution $f\mapsto \overline f$ on $\cA$ is the decategorification 
of the duality functor 
$ M \mapsto \Hom(M,\one)$
where $\Hom(M,\one)$ denotes the graded $R$-module whose $n$th component is the set of grading preserving $R$-linear maps $M \to \one(n)$.
\end{example}

In general, confronted with some natural pre-canonical structure, it is an interesting problem (which  in the present work we do not address) to identify a categorification which can explain the existence and special properties of an associated canonical basis. 

\subsection{Comparison with IC bases and $P$-kernels}
 \label{IC-sect}

Webster's definition of canonical bases  is similar to two  concepts  appearing earlier in the literature:  \emph{IC bases} as formalized by Du in \cite{Du} and \emph{$P$-kernels} as introduced by Stanley in \cite{StanleyPKernel}. 
We review this terminology here, and explain how one may view canonical bases as special cases of IC bases, and $P$-kernels as special cases of pre-canonical structures. 
We remind the reader that for us, all pre-canonical structures (as specified by Definition \ref{precanon-def}) are what Webster \cite[Definition 1.5]{Webster} calls \emph{balanced pre-canonical structures.}

To begin, we recall the following definition from \cite{Du},   studied elsewhere, for example, in \cite{Brenti1,Green}.

\begin{definition}
Let $V$ be a free $\cA$-module with 
\begin{itemize}
\item a ``bar involution''   $\psi $ given by an $\cA$-antilinear map $V \to V$
with $\psi^2=1$. 
\item a ``standard basis'' $\{a_c\}$ with index set $C$. 
\end{itemize}
A set of vectors $\{b_c\}$ of $V$ is an \emph{IC basis} relative to $(\psi,\{a_c\})$ if it is the \emph{unique} basis such that 
\[ \psi(b_c) = b_c \qquand b_c \in a_c + \sum_{c' \in C} v^{-1}\ZZ[v^{-1}] \cdot a_{c'}\qquad\text{for each $c \in C$.}\]
\end{definition}

\begin{remark}
In \cite{Brenti1,Du,Green}, this definition is formulated slightly differently. There, one begins with a bar involution $\psi$, a basis $\{m_c\}_{c \in C}$ of $V$, and a function $r : C \to \ZZ$. An IC basis of $V$ is then defined exactly as above relative to $\psi$ and the  standard basis $\{a_c\}$ 
given by setting $a_c = v^{-r(c)} m_c$. One passes to our definition by assuming $r=0$; there is clearly no loss of generality in this reduction.
\end{remark}

The initial data in the definition of an IC basis is more general than a pre-canonical structure in two aspects: there is no condition on the action of the bar involution on the standard basis, and the index set $C$ is no longer required to be partially ordered.
When the initial data  $(\psi,\{a_c\})$ is a pre-canonical structure, the notions of a canonical basis and an IC basis are equivalent:

\begin{proposition}\label{IC-prop} Let $V$ be a free $\cA$-module with a pre-canonical structure $(\psi,\{a_c\})$.
Relative to $(\psi,\{a_c\})$, a set of vectors $\{b_c\}$ in $V$ is a canonical basis  if and only if it is an IC basis.
\end{proposition}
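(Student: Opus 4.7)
The plan is to reduce both implications to a single observation: in any pre-canonical structure, the only $\psi$-invariant vector all of whose standard-basis coefficients lie in $v^{-1}\ZZ[v^{-1}]$ is zero. Granting this key lemma, Proposition \ref{IC-prop} will follow formally, with Proposition \ref{unique-prop} supplying the uniqueness clause implicit in the IC basis definition.

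I would prove the key lemma by a maximality argument. Writing $x = \sum_c \alpha_c\hs a_c$ with finite support (since $V$ is free), assume $x \neq 0$ and pick a maximal $d$ with $\alpha_d \neq 0$. Expanding $\psi(x) = \sum_e \bar\alpha_e\hs\psi(a_e)$ via $\psi(a_e) \in a_e + \sum_{c'' < e}\cA\hs a_{c''}$, the only contributions to the coefficient of $a_d$ come from indices $e \geq d$, and $\alpha_e = 0$ for $e > d$ by maximality, leaving $\alpha_d = \bar\alpha_d$. Since $v^{-1}\ZZ[v^{-1}] \cap v\ZZ[v] = \{0\}$, this contradicts $\alpha_d \neq 0$.

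The ``only if'' direction is then essentially a formality: a canonical basis $\{b_c\}$ automatically satisfies the IC conditions, which are a weakening of (C1) and (C2); and any other basis $\{b'_c\}$ satisfying the IC conditions gives a difference $b'_c - b_c$ that is $\psi$-invariant with every standard coefficient in $v^{-1}\ZZ[v^{-1}]$ (the leading $a_c$ terms cancel), and so is zero by the key lemma.

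The ``if'' direction carries the main obstacle: given an IC basis $\{b_c\}$, I must sharpen the weak condition $b_c - a_c \in \sum_{c'} v^{-1}\ZZ[v^{-1}]\hs a_{c'}$ to the canonical condition $b_c - a_c \in \sum_{c'<c} v^{-1}\ZZ[v^{-1}]\hs a_{c'}$. Writing $b_c - a_c = \sum_{c'} q_{c',c}\hs a_{c'}$, I would consider the set $D = \{d : q_{d,c} \neq 0 \text{ and } d \not< c\}$ and suppose it nonempty; by finite support it has a maximal element $d^*$. The delicate step is checking that all cross-terms contributing to the coefficient of $a_{d^*}$ in $\psi(b_c)$ vanish. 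For $e > d^*$, the case $e = c$ would give $d^* < c$, contradicting $d^* \in D$, so $e \neq c$ and the relevant factor is $\bar q_{e,c}\hs r_{d^*,e}$; and if $q_{e,c} \neq 0$, then either $e \in D$ (contradicting maximality of $d^*$) or $e < c$, which combined with $d^* < e$ again forces $d^* < c$ and contradicts $d^* \in D$. With all cross-terms gone, the equation $\psi(b_c)=b_c$ collapses as in the key lemma to $q_{d^*,c} = \bar q_{d^*,c}$, forcing $q_{d^*,c} = 0$ and contradicting $d^* \in D$. Hence $D$ is empty, which is exactly the canonical basis condition.
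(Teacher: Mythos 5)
Your proposal is correct and takes essentially the same route as the paper: the maximality argument on indices $d \not< c$ with nonzero coefficient $q_{d,c}$, forcing $q_{d^*,c} = \overline{q_{d^*,c}}$ and hence $q_{d^*,c}=0$, is precisely the paper's argument for the IC-implies-canonical direction. Your ``key lemma'' is in substance the proof of Proposition \ref{unique-prop}, so the uniqueness clause in the converse direction is also handled the same way, just packaged as a self-contained vanishing statement rather than by citing that proposition together with the first implication.
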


\begin{proof}
Suppose $\{b_c\}$ is an IC basis relative to $(\psi,\{a_c\})$. Let $f_{x,y} \in v^{-1}\ZZ[v^{-1}]$ for $x,y \in C$ be the polynomials such that  $b_y = a_y + \sum_{x \in C} f_{x,y} a_{x}$.
To show that $\{b_c\}$ is a canonical basis, we must check that $f_{x,y} = 0$ whenever $x \not < y$. This follows since if $y \in C$ is fixed and $x \in C$ is maximal among all elements $x\not < y$ with $f_{x,y} \neq 0$, then 
the equality $b_y = \psi(b_y)$ together with the unitriangular formula for $\psi$ implies that $f_{x,y} =\overline{f_{x,y}}$, which is impossible for a nonzero element of $v^{-1}\ZZ[v^{-1}]$. 

Now suppose conversely that $\{b_c\}$ is a canonical basis. This basis automatically has both desired properties of an IC basis, so it remains only to show that it is the unique basis with these properties. This follows from Proposition \ref{unique-prop}, since the argument in the previous paragraph shows that any other basis $\{b'_c\}$ 
with the desired properties of an IC basis is a canonical basis.
%
%
\end{proof}

\def\Int{\mathrm{Int}}

Stanley first introduced in   \cite{StanleyPKernel} the concept of a $P$-kernel for any locally finite poset $P$, which Brenti studied subseqently in \cite{Brenti0,Brenti1}.
 To define $P$-kernels we must review some terminology for partially ordered sets; \cite[Chapter 3]{StanleyEnum1} serves as the standard reference for this material.

Let $P$ be a partially ordered set (i.e., a poset) and let $\Int(P) = \{ (x,y) \in P^2 : x \leq y \}$.
Assume the poset $P$ is \emph{locally finite}, i.e., that  $\{ t \in P : x\leq t \leq y\}$ is finite for all $x,y \in P$.  
Let $R$ be a commutative ring and let $q$ be an indeterminate. The \emph{incidence algebra} $I(P;R[q])$ is the set of functions $f : \Int(P) \to R[q],$ with sums and scalar multiplication given pointwise and 
products given by 
\[ (fg)(x,y) = \sum_{x \leq t \leq y} f(x,t)g(t,y)\qquad\text{for }f,g : \Int(P) \to R[q].\]
This algebra has a unit given by the function $\delta_P : \Int(P) \to R[q]$ with $\delta_P(x,y) = \delta_{x,y}$ for $x,y \in P$. 
A function $f : \Int(P) \to R[q]$ is invertible if and only if $f(x,x)$ is a unit in $R[q]$ for all $x \in P$.
We adopt the  convention of setting $f(x,y) = 0$ whenever $f : \Int(P) \to R[q]$ and $x,y \in P$ are elements such that $x \not \leq y$.

Finally, let $r : P \to \ZZ$ be a function such that $r(x) <r(y)$ if $x<y$, and define $r(x,y) = r(y)-r(x)$ for $x,y \in P$. Relative to the initial data $(P,R,q,r)$, we have the following definition, which can be found as \cite[Definition 6.2]{StanleyPKernel} or in \cite[Section 2]{Brenti1}.

\begin{definition}
 An element $K \in I(P;R[q])$ is a \emph{$P$-kernel} if 
 \ben
 \item[(1)] 
 $K(x,x) = 1$ for all $x \in P$.
 \item[(2)] There exists an invertible $f \in I(P;R[q])$ such that $(Kf)(x,y) = q^{r(x,y)} \overline{f(x,y)}$ for   $x,y \in P$.
 \een
  An invertible element $f \in I(P)$ satisfying condition (2) is called \emph{$K$-totally acceptable}.
  \end{definition}


Brenti proves the following result as \cite[Theorem 6.2]{Brenti0}. This statement strengthens  an earlier result \cite[Corollary 6.7]{StanleyPKernel} due to Stanley.
  
  \begin{theorem}[Brenti \cite{Brenti0}] \label{Pkernel-thm}
Suppose $K \in I(P;R[q])$ 
is a $P$-kernel. If  $P$ is locally finite, then there exists a unique $K$-totally acceptable element $\gamma \in I(P;R[q])$ such that 
\[ \gamma(x,x) = 1 \qquand \deg_q (\gamma(x,y)) < \tfrac{1}{2}r(x,y) \qquad\text{for all }x,y \in P\text{ with }x<y.\]
Call  $\gamma$  the \emph{KLS-function} of $K$. (Here ``KLS'' abbreviates ``Kazhdan-Lusztig-Stanley.'')
\end{theorem}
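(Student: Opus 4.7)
My plan is to construct $\gamma$ entry by entry by induction on the rank $n = r(x,y)$, in the spirit of the original Kazhdan--Lusztig construction. To set up the induction cleanly, I would first introduce the ring endomorphism $\Phi$ of the extended incidence algebra $I(P;R[q,q^{-1}])$ defined by $(\Phi g)(x,y) = q^{r(x,y)}\overline{g(x,y)}$; additivity of $r$ along chains makes $\Phi$ a ring homomorphism, and a direct check gives $\Phi^2 = \mathrm{id}$. In this language, total acceptability reads simply as $Kf = \Phi(f)$, and applying $\Phi$ to both sides and invoking the invertibility of $f$ forces the identity
\[
\Phi(K)\cdot K \;=\; \delta_P
\]
in $I(P;R[q,q^{-1}])$. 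This is the key algebraic consequence of the $P$-kernel hypothesis that drives everything else.

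For existence, set $\gamma(x,x) = 1$. Given $x<y$ with $r(x,y) = n$, assume inductively that $\gamma(t,y)$ has been constructed for all $t$ with $x < t \leq y$, satisfying both $(K\gamma)(t,y) = q^{r(t,y)}\overline{\gamma(t,y)}$ and $\deg_q \gamma(t,y) < r(t,y)/2$. The desired equation $(K\gamma)(x,y) = q^n\overline{\gamma(x,y)}$, after separating the $t=x$ summand, reduces to solving
\[
\gamma(x,y) - q^n \overline{\gamma(x,y)} \;=\; -C(x,y), \qquad C(x,y) := \sum_{x < t \leq y} K(x,t)\gamma(t,y)
\]
for a polynomial $\gamma(x,y)\in R[q]$ of degree less than $n/2$. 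This existence (and uniqueness) of the solution reduces to an elementary lemma: given $h\in R[q,q^{-1}]$ with $h + q^n\overline{h} = 0$, there is a unique $g\in R[q]$ with $\deg g < n/2$ and $g - q^n\overline{g} = h$, obtained by reading off the coefficients of $h$ in degrees less than $n/2$.

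The crucial step is to verify the compatibility condition $C(x,y) + q^n\overline{C(x,y)} = 0$. I would do so by substituting the inductive identity $\overline{\gamma(t,y)} = q^{-r(t,y)}(K\gamma)(t,y)$ into $q^n\overline{C(x,y)}$, interchanging the order of summation over intermediate poset elements $x < t \leq s \leq y$, and applying $(\Phi(K)\cdot K)(x,s) = \delta_{x,s}$ to collapse the inner sum over $t$ to $-K(x,s)$ for $x<s\leq y$. This shows $q^n\overline{C(x,y)} = -C(x,y)$ exactly. Uniqueness of $\gamma$ is the easier direction: the difference $\eta = \gamma_1 - \gamma_2$ of two KLS-functions satisfies $\eta(x,x)=0$, $K\eta = \Phi(\eta)$, and the same degree bound, so a rank induction (the analogous ``palindromic polynomial of degree $<n/2$ fixed by $g \mapsto q^n\overline{g}$ is zero'' observation) forces $\eta \equiv 0$.

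I expect the compatibility verification to be the main obstacle, since it is where the $P$-kernel identity is used in a nontrivial way and one must carefully interchange a double summation over a poset interval. A secondary technical point worth addressing is that when $n$ is even the existence lemma requires the middle coefficient $C_{n/2}$ of the right-hand side to vanish; the palindromicity $C + q^n\overline{C} = 0$ forces only $2 C_{n/2} = 0$, which suffices when $R$ has no 2-torsion (in particular for $R = \ZZ$) and otherwise would require an additional argument specific to the form of $K$.
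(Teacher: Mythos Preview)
The paper does not supply its own proof of this theorem; it is stated as a result of Brenti with a citation to \cite[Theorem 6.2]{Brenti0}. So there is nothing in the paper to compare against directly.

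That said, your argument is the standard Kazhdan--Lusztig style induction and is essentially the approach used by Brenti (and, in a more restricted setting, by Stanley). Your observation that the $P$-kernel hypothesis forces $\Phi(K)\cdot K=\delta_P$ in the extended incidence algebra is exactly the algebraic identity that makes the induction go through, and your compatibility verification for $C(x,y)$ via the interchange of summation is correct. The uniqueness argument is likewise standard.

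The one point you flag deserves emphasis: for a general commutative ring $R$, the anti-palindromicity $C+q^n\overline{C}=0$ only yields $2C_{n/2}=0$ at the middle degree when $n$ is even, so without a no-$2$-torsion hypothesis on $R$ (or an extra argument tailored to $K$) the existence step has a genuine gap. This is not a defect in your reasoning but a subtlety in the theorem as stated; for the paper's applications one always has $R=\ZZ$, where the issue disappears.
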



Returning to our earlier convention,
 we let $C$ be an index set with a partial order $\leq$. Assume the hypothesis of Theorem \ref{canon-thm} (i.e., that all lower intervals in $C$ are finite) and let $V$ be the free $\cA$-module with a basis given by the symbols $a_c$ for $c \in C$.
To translate the language of $P$-kernels into pre-canonical structures, 
assume $P = (C,\leq)$ and $R = \ZZ$ and $q=v^2$. 
Given a $P$-kernel $K$, we may then define 
$\psi_K : V \to V$ as the $\cA$-antilinear map with \[\psi_K(a_y) =  \sum_{x \in C} v^{r(x,y)}  \cdot \overline{K(x,y)} \cdot a_{x}\qquad\text{for $y \in C$.}\]
Note that our assumption that $C$ has finite lower intervals ensures that the sum on the right side of this formula is well-defined.

In \cite{Brenti0}, Brenti proves that $P$-kernels are equivalent to IC bases of a special form.
It turns out that this special form is essentially the requirement that the initial data $(\psi,\{a_c\})$ of an IC basis form a pre-canonical structure. Brenti's results thus translate via Proposition \ref{IC-prop} into the following statement relating $P$-kernels and canonical bases.

\begin{theorem}[Brenti \cite{Brenti0}]
\label{Pkernel-prop}
Assume $P = (C,\leq)$ and $R = \ZZ$ and $q=v^2$.
\ben
\item[(a)] The map $K \mapsto \psi_K$  is a bijection from the set of $P$-kernels to the set of maps $\psi$ such that $(\psi,\{a_c\})$ is a pre-canonical structure  on $V$ with the property that
\[\psi(a_y) \in \ZZ[v^{-2}]\spanning\{ v^{r(x,y)}a_{x} : x \in C\}\qquad\text{for $y \in C$.}\]
\item[(b)] If $\gamma$ is the KLS-function of a $P$-kernel $K$ and $\{b_c\}$ is the canonical basis of $V$ relative to the pre-canonical structure $(\psi_K,\{a_c\})$, 
then 
\[ b_y = \sum_{x \in C} v^{-r(x,y)} \cdot  {\gamma(x,y)} \cdot a_x\qquad\text{for $y \in C$.}\]
\een
\end{theorem}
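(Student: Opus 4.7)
The plan is to deduce both parts of the theorem from Brenti's results on IC bases by combining them with Proposition \ref{IC-prop}, which identifies canonical bases with IC bases whenever the initial data forms a pre-canonical structure. The main conceptual work is to verify that under the substitution $P=(C,\leq)$, $R=\ZZ$, $q=v^2$, the ``special form'' condition $\psi(a_y) \in \ZZ[v^{-2}]\text{-span}\{v^{r(x,y)}a_x : x \in C\}$ is exactly the extra hypothesis needed to promote an arbitrary antilinear $\psi$ with $\psi^2=1$ into the bar involution of a pre-canonical structure.

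For part (a), I would first check directly that $\psi_K$ is well-defined (the sum is finite because all lower intervals in $C$ are finite and $K(x,y)=0$ when $x \not\leq y$) and $\cA$-antilinear. The unitriangularity required in Definition \ref{precanon-def} follows automatically: the coefficient of $a_y$ in $\psi_K(a_y)$ is $v^{r(y,y)} \overline{K(y,y)} = 1$ by condition (1) of the $P$-kernel definition, while the remaining terms involve only indices $x < y$. The nontrivial point is the involutivity $\psi_K^2=1$; one expands $\psi_K^2(a_z) = \sum_x v^{r(x,z)} \bigl(\sum_y v^{-2r(y,z)} K(y,z)\overline{K(x,y)}\bigr) a_x$ and matches this against the identity equivalent to condition (2) of the $P$-kernel definition, namely the existence of a $K$-totally acceptable element. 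Conversely, given any pre-canonical structure $(\psi,\{a_c\})$ of the special form, the coefficients extracted via $\overline{K(x,y)} := v^{-r(x,y)} [a_x]\psi(a_y)$ are automatically polynomials in $q=v^2$ (by the $\ZZ[v^{-2}]$-span condition) and they assemble into a $P$-kernel, with $\psi^2=1$ again supplying condition (2). This gives the desired bijection, and essentially repeats Brenti's translation from \cite{Brenti0} in our slightly different language.

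For part (b), let $\gamma$ be the KLS-function of $K$ from Theorem \ref{Pkernel-thm}, and define $b_y := \sum_x v^{-r(x,y)} \gamma(x,y) a_x$. I would verify that $\{b_c\}$ satisfies both defining properties of a canonical basis in the sense of Definition \ref{canon-def}: first, that $\psi_K(b_y) = b_y$, which is a direct computation unwinding the definition of $\psi_K$ and using the total acceptability equation $(K\gamma)(x,y) = q^{r(x,y)}\overline{\gamma(x,y)}$; and second, that $b_y - a_y \in \sum_{x<y} v^{-1}\ZZ[v^{-1}] a_x$, which follows from the degree bound $\deg_q \gamma(x,y) < \tfrac12 r(x,y)$ for $x<y$ and the normalization $\gamma(y,y)=1$. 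By Proposition \ref{unique-prop} (via Proposition \ref{IC-prop}), this canonical basis is unique, so the formula is established.

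The technical heart of the argument—and the one point that cannot be made ``formal''—is the bijective correspondence between involutivity of $\psi_K$ and the existence clause of condition (2) in the $P$-kernel definition. Everything else is either straightforward bookkeeping of coefficients or a direct invocation of Brenti's results via Proposition \ref{IC-prop}. Since the author flags this theorem as essentially a translation of \cite[Theorem 6.2]{Brenti0}, my proof would be brief, emphasizing the dictionary between the two sets of conventions rather than re-deriving the core equivalence.
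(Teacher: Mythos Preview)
Your proposal is correct and follows essentially the same route as the paper: reduce part (a) to the equivalence ``$(\psi_K,\{a_c\})$ is pre-canonical $\Leftrightarrow$ $K$ is a $P$-kernel,'' identify involutivity $\psi_K^2=1$ as the nontrivial ingredient matching condition (2), and handle part (b) via Proposition~\ref{IC-prop}. The only cosmetic difference is that the paper dispatches both parts by pointing to specific results in \cite{Brenti0} (Proposition~3.1 for (a), Theorem~3.2 for (b)) rather than re-verifying (C1) and (C2) by hand as you sketch; your more self-contained verification for (b) is fine and amounts to the same thing.
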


\begin{remark}
Note that part (b)
is only a meaningful statement if the KLS-function $\gamma$ and the canonical basis $\{b_c\}$ both exist, but this follows from Theorems \ref{canon-thm} and \ref{Pkernel-thm}
since we assume all lower intervals in $P= (C,\leq)$ are finite.
Observe that since $\gamma(x,y) \in \ZZ[v^2]$ for all $x,y \in C$, this result shows that  not all canonical bases   correspond to KLS-functions of $P$-kernels.
\end{remark}

\begin{proof}
The definition of $\psi_K$ makes sense for any $K \in I(P,R[q])$, and 
part (a) is equivalent to
the statement that $(\psi_K,\{a_c\})$ is a pre-canonical structure if and only if $K$ is a $P$-kernel.
Clearly $(\psi_K,\{a_c\})$ is a pre-canonical structure if and only if $K(x,x) = 1$ for all $x \in P$ and $\psi_K^2=1$.
The assertion that these two properties hold if and only if $K$ is a $P$-kernel
is precisely \cite[Proposition 3.1]{Brenti0}, since the map $\iota$ defined in part (ii) of that result  is just  $\psi_{K^{-1}}$  (with $m_c = v^{r(c)} a_c$).
Part (b) is equivalent to \cite[Theorem 3.2]{Brenti0} by Proposition \ref{IC-prop}.
\end{proof}

\subsection{Pre-canonical module structures}

In this short section we introduce a useful variant of Definition \ref{precanon-def}. 
Suppose $\cB$ is an $\cA$-algebra with a pre-canonical structure; write $\overline b$ for the image of $b \in \cB$ under the corresponding bar involution. (For us, all algebras  are unital and associative.)
Let $V$ be a $\cB$-module which is free as an $\cA$-module.
\begin{definition}
A \emph{pre-canonical $\cB$-module structure} on $V$ is  a pre-canonical structure whose bar involution $\psi : V\to V$ commutes with the bar involution of $\cB$ in the sense that 
\[ \psi( bx) = \overline b \cdot \psi(x)\qquad\text{for all }b \in \cB\text{ and }x \in V.\]
\end{definition}

Observe that a pre-canonical structure is thus the same thing as a pre-canonical $\cA$-module structure.
The additional compatibility condition satisfied by a pre-canonical $\cB$-module structure can be useful for proving uniqueness statements. In particular, we have the following lemma.

\begin{lemma}\label{uniquepre-lem} Suppose $V$ has a basis $\{a_c\}$ with partially ordered index set $(C,\leq)$. If $V$ is generated as a $\cB$-module by the minimal elements of the basis $\{a_c\}$, then there exists at most one pre-canonical 
$\cB$-module structure on $V$ in which $\{a_c\}$ serves as the ``standard basis.''
\end{lemma}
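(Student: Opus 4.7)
The plan is to show that the compatibility with the $\cB$-action, together with the unitriangularity requirement on the standard basis, pins down the bar involution $\psi$ completely.

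First, I would observe that for any minimal element $c$ of the index set $(C,\leq)$, the condition
\[ \psi(a_c) \in a_c + \sum_{c'<c} \cA\cdot a_{c'} \]
forces $\psi(a_c) = a_c$, simply because the indexing sum on the right is empty. So on the minimal basis elements, the value of $\psi$ is prescribed by the axioms of a pre-canonical structure.

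Next, I would use the $\cB$-generation hypothesis: any $x \in V$ can be written (non-uniquely, in general) as $x = \sum_i b_i \cdot a_{c_i}$, where each $c_i$ is a minimal element of $(C,\leq)$ and $b_i \in \cB$. Applying the compatibility identity $\psi(b\cdot y) = \overline b \cdot \psi(y)$ and using the previous step, we get
\[ \psi(x) = \sum_i \overline{b_i}\cdot \psi(a_{c_i}) = \sum_i \overline{b_i} \cdot a_{c_i}. \]
The key point is that the right-hand side depends only on the bar involution of $\cB$ and the chosen expression of $x$; it does \emph{not} depend on the particular pre-canonical $\cB$-module structure.

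Finally, to conclude uniqueness, I would suppose that $\psi_1$ and $\psi_2$ are two pre-canonical $\cB$-module structures on $V$ sharing the standard basis $\{a_c\}$. Fix any $x \in V$ and any decomposition $x = \sum_i b_i \cdot a_{c_i}$ with $c_i$ minimal. By the above formula applied to each $\psi_j$, we obtain $\psi_1(x) = \sum_i \overline{b_i}\cdot a_{c_i} = \psi_2(x)$. Since $x$ was arbitrary, $\psi_1 = \psi_2$, proving the lemma. There is no real obstacle here: the argument is essentially formal, and the only subtle point is noting that although the decomposition of $x$ as a $\cB$-combination of minimal basis vectors is not unique, the value produced by the formula must coincide with the (assumed to exist) value $\psi_j(x)$, so well-definedness is automatic.
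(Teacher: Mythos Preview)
Your proof is correct and follows essentially the same approach as the paper: the paper phrases it by observing that the set $U\subset V$ on which two such bar involutions $\psi$ and $\psi'$ agree is a $\cB$-submodule containing all minimal $a_c$, hence $U=V$. Your argument simply unpacks this by writing each $x$ explicitly as a $\cB$-combination of minimal basis elements and computing $\psi(x)$ directly.
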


\begin{proof}
Suppose $\psi$ and $\psi'$ are two $\cA$-antilinear maps $V\to V$ which, together with $\{a_c\}$, give $V$ a pre-canonical $\cB$-module structure. Let $U \subset V$ be the set of elements on which $\psi$ and $\psi'$ agree. Then $U$ is a $\cB$-submodule which contains 
the minimal elements of the basis $ \{a_c\}$. Since these elements generate $V$, we have $U = V$ so $\psi = \psi'$.
\end{proof}

%


%
%
%

\subsection{Twisted involutions}\label{twisted-sect}
 
 We review here the definition of the set of twisted involutions attached to a Coxeter system. This set has many   interesting combinatorial properties; see \cite{H1,H2,H3,Incitti0,Incitti,S}.
Let $(W,S)$ be any Coxeter system. Write $\ell : W \to \NN$ for the associated length function and $\leq $ for the Bruhat order. 
We denote by  $\Aut(W,S)$  the group of automorphisms $\theta:W \to W$ such that $\theta(S) = S$,
and define 
\[W^+= \{ (x,\theta): x \in W\text{ and }\theta \in \Aut(W,S)\}.\] 
We extend the length function and Bruhat order to $W^+$ by setting $\ell(x,\theta) = \ell(x)$
and by setting $(x,\theta) \leq (x',\theta')$ if and only if $\theta = \theta'$ and $x\leq x'$.
The set $W^+$ has the structure of a group, in which multiplication of elements is given by 
\[ (x,\alpha)(y,\beta) = (x\cdot \alpha(y),\alpha\beta).\]
We view $W \subset W^+$ as a subgroup by identifying $x \in W$ with the pair $(x,1)$. Likewise, we view $\Aut(W,S)\subset W^+$ as a subgroup by identifying $\theta \in \Aut(W,S)$ with the pair $(1,\theta)$.
With respect to these inclusions, $W^+$ is a semidirect product $  W \rtimes \Aut(W,S)$.
\begin{definition}
The set of \emph{twisted involutions} of a Coxeter system $(W,S)$ is 
 \[ \I = \I(W,S) = \{ w \in W^+ : w=w^{-1}\}.\]
\end{definition}
A pair $(x,\theta) \in W^+$ belongs to $\I$ if and only if $\theta = \theta^{-1}$ and 
 $\theta(x) = x^{-1}$. In this situation, often in the literature the element $x \in W$ is referred to as a twisted involution, relative to the automorphism $\theta$.
We have defined twisted involutions slightly more generally as ordinary involutions of the extended group $W^+$, since 
all of the results we will state are true relative to any choice of automorphism $\theta$.

 If $s \in S$ and $w  = (x,\theta)\in \I$ then $sws = (s\cdot x\cdot \theta(s),\theta)$ is also a twisted involution. The latter may be equal to $w$; in particular, $sws = w$  if and only if $sw=ws$, in which case $sw \in \I$. 
 \begin{notation}  Let $s\act w$ denote whichever of $sws$ or $sw$ is in $\I\setminus\{w\}$; i.e.,  define
\be\label{sact-eq} s\act w  = \begin{cases} sws&\text{if }sw\neq ws \\ sw&\text{if }sw=ws\end{cases}
\qquad\text{for }s \in S\text{ and }w \in \I.
\ee 
While $s\act (s\act w) = w$,
the operation $\act$ does not extend to an action of $W$ of $\I$. 
\end{notation}

The restriction of the Bruhat order on $W$ to $\I$ 
forms a poset with many special properties. Concerning this, 
we will just need the following result, 
which rephrases  \cite[Theorem 4.8]{H1}.

\begin{theorem}[Hultman \cite{H1}]  \label{rho-def} 
The poset $(\I,\leq)$ is graded, and its rank function $\rho : \I \to \NN$ 
satisfies 
\[
\rho (s\act w) = \rho(w) - 1
\quad\Leftrightarrow\quad \ell(s\act w) < \ell(w)
\quad\Leftrightarrow\quad \ell(sw) = \ell(w)-1
\]
for all $s \in S$ and $w \in \I$.
\end{theorem}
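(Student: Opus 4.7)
The plan is to reduce the statement to Hultman's original result \cite[Theorem 4.8]{H1} and verify that the formulation above matches his. Hultman associates to each $w \in \I$ the minimum number $r \geq 0$ such that $w = s_r \act (s_{r-1} \act (\cdots \act (s_1 \act 1) \cdots))$ for some $s_1,\dots,s_r \in S$, and proves that this minimum is finite, agrees with the rank function of the graded poset $(\I,\leq)$, and changes by $\pm 1$ under every application of $\act$. Defining $\rho$ this way gives the first equivalence $\rho(s\act w) = \rho(w)-1 \Leftrightarrow \ell(s\act w) < \ell(w)$ after one verifies that the direction of the rank change tracks the direction of the length change; I would establish this by induction on $\rho(w)$, using that $s \act w$ is obtained from $w$ by one generator on each side (at most).

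Next, I would establish the equivalence $\ell(s\act w) < \ell(w) \Leftrightarrow \ell(sw) = \ell(w)-1$ by splitting into the two cases in the definition \eqref{sact-eq}. Write $w = (x,\theta) \in \I$, so that $sw=(sx,\theta)$ and $ws = (x\theta(s),\theta)$, and the twisted involution property becomes $\theta(x)=x^{-1}$.

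In the commuting case $sw = ws$, we have $s\act w = sw$, so $\ell(s\act w) = \ell(sw)$, and the equivalence is immediate since $\ell(sw)-\ell(w) \in \{\pm 1\}$ by the standard exchange property of Coxeter systems. In the noncommuting case $sw \neq ws$, we have $s\act w = sws$. Here the key ingredient is that for a twisted involution $w$, the left descent condition $\ell(sw)<\ell(w)$ is equivalent to the twisted right descent condition $\ell(ws)<\ell(w)$; this follows from $\theta(x)=x^{-1}$ together with the invariance of the length function under the inverse and under $\theta$. Granting this, if $\ell(sw)=\ell(w)-1$, then $\ell(ws)=\ell(w)-1$ as well; since $sw \ne ws$, the strong exchange condition forces $\ell(sws) = \ell(w)-2$, so $\ell(s\act w) < \ell(w)$. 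Conversely, if $\ell(sws) < \ell(w)$, the subword characterization of Bruhat order gives $sws \leq w$ with strict inequality, and one of the intermediate subwords $sw$ or $ws$ must satisfy $\ell(\cdot) = \ell(w)-1$; by the equality of descent sets just noted, both do, yielding $\ell(sw) = \ell(w)-1$.

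The main obstacle is the noncommuting case, and specifically the verification that left and right descent sets coincide for twisted involutions in the required sense, which is where the hypothesis $w \in \I$ (as opposed to $w \in W^+$) enters essentially. Everything else follows from standard properties of Coxeter systems and from Hultman's construction of $\rho$, which handles the graded structure of $(\I,\leq)$ uniformly.
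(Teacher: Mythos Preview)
The paper does not give a proof of this theorem: it is stated with attribution to Hultman and is described in the text immediately preceding it as a rephrasing of \cite[Theorem 4.8]{H1}. So there is nothing to compare your argument against in the paper itself.

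That said, your sketch is essentially correct as a reconstruction. The second equivalence is the part with real content, and your case analysis handles it properly. In the noncommuting case, the key fact you use---that $\ell(sw)=\ell(ws)$ for $w=(x,\theta)\in\I$---is correct: since $\theta(x)=x^{-1}$ one has $\ell(x\theta(s))=\ell(\theta(s)x^{-1})=\ell(\theta(sx))=\ell(sx)$. The forward direction then follows from the exchange condition exactly as you indicate (if $\ell(sx)=\ell(x\theta(s))=\ell(x)-1$ and $sx\neq x\theta(s)$, a reduced expression argument forces $\ell(sx\theta(s))=\ell(x)-2$), and the converse is immediate by contrapositive since $\ell(sw)=\ell(w)+1$ forces $\ell(sws)\geq\ell(ws)-1=\ell(w)$.

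One small point: your justification of the first equivalence (``by induction on $\rho(w)$'') is vaguer than it needs to be. Once you grant Hultman's theorem that $(\I,\leq)$ is graded with rank function $\rho$ and that $\{w,s\act w\}$ is always a covering pair, the equivalence $\rho(s\act w)=\rho(w)-1 \Leftrightarrow s\act w < w$ is immediate, and $s\act w < w \Leftrightarrow \ell(s\act w)<\ell(w)$ follows from the second equivalence (which guarantees comparability of $w$ and $s\act w$ in Bruhat order). No separate induction is required.
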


We reserve the notation $\rho$ in all later sections to denote the rank function of $(\I,\leq)$. Note that $\rho(1) = 0$, and so one can compute $\rho(w)$ inductively using the equivalent identities in the theorem.
As with $\ell$, there are explicit formulas for $\rho$   when $W$ is a classical Weyl group; see \cite{Incitti0,Incitti}.

\subsection{Kazhdan-Lusztig basis}\label{KL-sect}

In this final preliminary section we recall briefly  the definition of the Kazhdan-Lusztig basis of the Iwahori-Hecke algebra of a Coxeter system. 
As references for this material, we mention \cite{CCG,KL,Soergel}.
Continue to let $(W,S)$ be a Coxeter system with length function $\ell : W \to \NN$ and Bruhat order $\leq$.
We write $\H = \H(W,S)$ to denote the free $\cA$-module with a basis given the symbols $H_w$ for $w \in W$. There is a unique $\cA$-algebra structure on $\H$ such that 
\[ H_s H_w = \begin{cases} H_{sw}&\text{if }sw>w \\ H_{sw} + (v-v^{-1})H_w &\text{if }sw<w\end{cases}
\qquad\text{for }s \in S\text{ and }w \in W.\]
The \emph{Iwahori-Hecke algebra} of $(W,S)$ is $\H$ equipped with this structure. 
%
%
%

The unit of $\H$ is the basis element $H_1$, which often we  write as $1$ or simply omit.
Observe that $H_s^{-1} = H_s + (v^{-1}-v)$ and that $H_w = H_{s_1}\cdots H_{s_k}$ whenever $w=s_1\cdots s_k$ is a reduced expression. 
Hence  every basis element $H_w$ for $w \in W$ is invertible.
We  denote by $H \mapsto \overline H$ the $\cA$-antilinear map $\H \to\H$ with $\overline { H_w} = (H_{w^{-1}})^{-1}$ for  $w \in W$. One checks that this map is  a ring involution, and we have the following result from Kazhdan and Lusztig's seminal work \cite{KL}.

\begin{thmdef}[Kazhdan and Lusztig \cite{KL}] \label{kl-thm}
Define
\begin{itemize}
\item  the ``bar involution'' of $\H$ to be the map $H\mapsto \overline H$.
\item  the ``standard basis'' of $\H$ to be $\{H_w\}$ with the partially ordered index set $(W,\leq)$.
\end{itemize}
This is a pre-canonical structure on $\H$ and it admits a canonical basis $\{ \underline H_w\}$.
\end{thmdef}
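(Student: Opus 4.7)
The plan is to check the two axioms of Definition \ref{precanon-def} for $(\psi,\{H_w\}_{w\in W})$, where $\psi$ denotes the bar involution $H\mapsto\overline H$ and $W$ is partially ordered by the Bruhat order, and then invoke Theorem \ref{canon-thm} to produce the canonical basis.

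The first axiom (that $\psi$ is an $\cA$-antilinear involution) is largely built into the setup: $\cA$-antilinearity holds by construction, and the sentence immediately preceding the theorem-definition asserts that $\psi$ is a ring involution. This latter claim is verified by a direct computation: one checks that the assignment $H_s \mapsto H_s + (v^{-1}-v) = H_s^{-1}$ respects the defining quadratic and braid relations of $\H$ after applying $v \mapsto v^{-1}$; the quadratic relation $H_s^2 = (v-v^{-1})H_s + 1$ is manifestly $v\leftrightarrow v^{-1}$-symmetric, and the braid relations involve no scalars. Once $\psi$ is known to be a ring homomorphism, involutivity $\psi^2=1$ reduces to the identity $\psi^2(H_s) = \psi(H_s) + (v - v^{-1}) = H_s$, which is immediate.

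For the Bruhat-unitriangularity axiom I would induct on $\ell(w)$. The base case $w=1$ is trivial. For the inductive step, choose $s \in S$ with $sw<w$ and write $H_w = H_sH_{sw}$, so that
\[
\psi(H_w) \,=\, \psi(H_s)\psi(H_{sw}) \,=\, (H_s + (v^{-1}-v))\Bigl(H_{sw} + \sum_{x<sw} c_x H_x\Bigr)
\]
with $c_x \in \cA$ by the inductive hypothesis. The leading product $H_sH_{sw}$ equals $H_w$, while every remaining summand is a scalar multiple of either $H_x$ with $x \leq sw < w$ or $H_sH_x \in \{H_{sx},\, H_{sx}+(v-v^{-1})H_x\}$ with $x \leq sw$. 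Using the subword characterization of the Bruhat order, one concludes $sx \leq w$ in all cases, so each such term lies in $\sum_{y<w}\cA H_y$, completing the induction.

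With both axioms verified, Theorem \ref{canon-thm} applies since the Bruhat order on $W$ is locally finite (indeed $|[1,w]| \leq 2^{\ell(w)}$), and we obtain the canonical basis $\{\underline H_w\}$. The only delicate step is the unitriangularity bookkeeping, which is standard; the overall statement is a reformulation, in Webster's framework, of the classical Kazhdan-Lusztig construction from \cite{KL}.
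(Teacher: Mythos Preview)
Your argument is correct and is the standard one. The paper does not actually supply its own proof of this theorem-definition: it simply records the statement as a known result of Kazhdan and Lusztig \cite{KL}, having already noted in the preceding sentence that $H\mapsto\overline H$ is a ring involution. Your write-up fills in exactly the details one would give if asked to reconstruct the classical argument in Webster's language---the ring-involution check on generators, the Bruhat-unitriangularity induction via the lifting property, and the appeal to Theorem~\ref{canon-thm} using finiteness of lower Bruhat intervals. There is nothing to compare or critique; both routes coincide with the original Kazhdan--Lusztig construction.
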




The canonical basis $\{ \underline H_w\}$ is the \emph{Kazhdan-Lusztig basis} of $\H$.
It is a simple exercise to show  for $s \in S$ that 
$\underline H_s   = H_s + v^{-1}.$
Define $h_{y,w} \in \ZZ[v^{-1}]$ for $y,w \in W$  such that 
  $\underline H_w  =  \sum_{y \in W} h_{y,w} H_y$.  We note the following well-known property of these polynomials. 

\begin{proposition}[Kazhdan and Lusztig \cite{KL}]\label{degbound1-prop} If $y\leq w $  then $v^{\ell(w) - \ell(y)}h_{y,w} \in 1 + v^2 \ZZ[v^2].$
\end{proposition}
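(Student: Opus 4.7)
The plan is to induct on $\ell(w)$. For the base case $w=1$, the only $y \le 1$ is $y=1$ itself, and $h_{1,1}=1 \in 1+v^2\ZZ[v^2]$, as desired.

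For the inductive step, fix $w \neq 1$, choose $s \in S$ with $sw<w$, and invoke the standard Kazhdan-Lusztig recursion
\[ \underline H_s \underline H_{sw} = \underline H_w + \sum_{\substack{z<sw \\ sz<z}} \mu(z,sw)\, \underline H_z, \]
in which $\mu(z,sw) \in \ZZ$ denotes the coefficient of $v^{-1}$ in $h_{z,sw}$; this recursion is itself a consequence of Proposition \ref{unique-prop} applied to $\underline H_s \underline H_{sw}$, since both sides are bar-invariant and have matching leading terms modulo $v^{-1}\ZZ[v^{-1}]$-combinations of standard basis elements. Expanding $\underline H_s = H_s + v^{-1}$ and using the defining multiplication formula of $\H$, one extracts from the coefficient of $H_y$ an identity of the form
\[ h_{y,w} = h_{sy,sw} + v^{\epsilon}\, h_{y,sw} - \sum_z \mu(z,sw)\, h_{y,z}, \]
where $\epsilon=-1$ if $sy>y$ and $\epsilon=+1$ if $sy<y$ (with the convention $h_{a,b}=0$ if $a\not\le b$).

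I would then multiply through by $v^{\ell(w)-\ell(y)}$ and examine each piece. The lifting property of the Bruhat order (valid because $sw<w$) ensures that $sy \le sw$ when $sy<y$ and that $y \le sw$ when $sy>y$; in either parity case, the length shifts are arranged so that precisely one of $v^{\ell(w)-\ell(y)}h_{sy,sw}$ or $v^{\ell(w)-\ell(y)}h_{y,sw}$ lies in $1 + v^2\ZZ[v^2]$ by the inductive hypothesis, while the other (after an extra factor of $v^2$ coming from $v^{\epsilon}$ or from the difference in length shifts) lies in $v^2\ZZ[v^2]$.

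The subtler point is the correction sum. By induction, each polynomial $h_{z,sw}$ is supported in degrees congruent to $\ell(z)-\ell(sw) \pmod 2$, so $\mu(z,sw) \neq 0$ forces $\ell(sw)-\ell(z)$ to be odd; hence $\ell(w)-\ell(z)$ is even and at least $2$, and together with the inductive bound on $v^{\ell(z)-\ell(y)}h_{y,z}$ this gives $v^{\ell(w)-\ell(y)}h_{y,z}\in v^2\ZZ[v^2]$. Summing the three contributions yields $v^{\ell(w)-\ell(y)}h_{y,w}\in 1+v^2\ZZ[v^2]$, closing the induction. The main obstacle is the careful parity bookkeeping for the $\mu$-correction terms; once the recursion and the lifting property are in hand, the rest reduces to a mechanical degree count.
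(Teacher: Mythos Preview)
The paper does not supply a proof of this proposition; it is stated as a result of Kazhdan and Lusztig with a reference to \cite{KL}, and the remark following it simply identifies $P_{y,w}=v^{\ell(w)-\ell(y)}h_{y,w}$ as the classical Kazhdan--Lusztig polynomial. So there is nothing in the paper to compare your argument against.

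Your argument is the standard inductive proof from \cite{KL}, transcribed into the $v$-normalization, and it is correct. The recursion you quote is right, and your extraction of the coefficient of $H_y$ from $\underline H_s\,\underline H_{sw}$ is accurate: $\underline H_s H_y$ equals $H_{sy}+v^{-1}H_y$ when $sy>y$ and $H_{sy}+vH_y$ when $sy<y$, giving exactly the identity you wrote. The length bookkeeping is also fine: in each of the two cases for $sy$, precisely one of $v^{\ell(w)-\ell(y)}h_{sy,sw}$ and $v^{\ell(w)-\ell(y)+\epsilon}h_{y,sw}$ acquires an extra $v^2$ (from the mismatch between $\ell(w)-\ell(y)$ and the relevant $\ell(sw)-\ell(\,\cdot\,)$) and lands in $v^2\ZZ[v^2]$, while the other contributes the constant term $1$ by the lifting property and induction. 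Your parity argument for the $\mu$-correction is the key point and is handled correctly: the inductive hypothesis on $h_{z,sw}$ forces $\ell(sw)-\ell(z)$ to be odd whenever $\mu(z,sw)\neq 0$, so $\ell(w)-\ell(z)$ is even and at least $2$, whence each summand contributes only in $v^2\ZZ[v^2]$.
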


\begin{remark} Define $q=v^2$ and $P_{y,w} = v^{\ell(w) - \ell(y)}h_{y,w}$ for $y,w \in W$. The polynomials $P_{y,w} \in \ZZ[q]$ are usually called the \emph{Kazhdan-Lusztig polynomials} of the Coxeter system $(W,S)$.
\end{remark}

The Kazhdan-Lusztig basis has several remarkable positivity properties;
for example, it is now known from work of Elias and Williamson \cite{EW} that 
for all $x,y \in W$ one has
$h_{x,y} \in   \NN[v^{-1}] $ and $ \underline H_x \underline H_y \in \NN[v,v^{-1}]\spanning\{ \underline H_z : z \in W\}$.
Available proofs  of such phenomena make extensive use of the interpretation of the Iwahori-Hecke algebra $\H$  as the split Grothendieck of an appropriate category (in \cite{EW}, the category of Soergel bimodules).
 This is an important motivation for the problem of constructing categorifications which give rise to pre-canonical structures of interest.  


\section{Characterizations}

The results of Kazhdan and Lusztig in the previous section give us a canonical basis for the free $\cA$-module generated by any Coxeter group $W$. In turn, recent results of Lusztig and Vogan \cite{LV2,LV1,LV6} construct a canonical basis of the free $\cA$-module generated by the set of twisted involutions in $W$. 
In this section our goal, broadly speaking, is to characterize the ways one can modify such constructions to get other canonical bases, and to explain how such bases differ from each other.


\subsection{Morphisms for pre-canonical structures}
\label{morph-sect}

To this end, our first task is to describe what it means for two pre-canonical structures to be the same.
This amounts to defining   what should comprise a morphism between pre-canonical structures on free $\cA$-modules. In this pursuit we are guided by the principle that 
if a morphism exists from one pre-canonical structure to another, and if the first structure admits a canonical basis, then the second structure should  admit a canonical basis which can be  described explicitly in terms of the first basis.

The following is a natural but rigid notion of (iso)morphism compatible with this philosophy.
Suppose $V$ and $V'$ are free $\cA$-modules with respective pre-canonical structures $(\psi, \{a_c\})$ and $(\psi',\{a'_c\})$.
We say that an $\cA$-linear map $\varphi : V \to V'$ is a \emph{strong isomorphism} of pre-canonical structures if $\varphi$ restricts to an order-preserving bijection $\{a_c\} \to \{a'_c\}$ between standard bases and $\varphi$ commutes with  bar involutions
in the sense that $\varphi\circ \psi = \psi'\circ \varphi.$
  Under these conditions,  $\varphi$ is necessarily invertible as an $\cA$-linear map.
The inverse and composition of strong isomorphisms of pre-canonical structures are again  strong isomorphisms of pre-canonical structures.
Moreover, if $\varphi : V \to V'$ is a strong isomorphism of pre-canonical structures and $V$ admits a canonical basis $\{b_c\}$, then $\{ \varphi(b_c)\}$ is a canonical basis of $V'$.
%

There are other situations in which we would like to consider two pre-canonical structures to be ``the same'' besides when they are strongly isomorphic. We illustrate this as follows.
Continue to let $V$ be a free $\cA$-module with a pre-canonical structure $(\psi,\{a_c\})$ whose standard basis is indexed by $(C,\leq)$.
Suppose for each index $c \in C$ we have an element $d_c \in \cA$. Let $u_c = d_c  a_c$ and consider the set of rescaled basis elements $\{u_c\}$, likewise indexed by $(C,\leq)$.
These elements  are linearly independent if and only if each $d_c \neq 0$, so assume this condition holds and define $U = \cA\spanning\{ u_c : c \in C\}$. 
One naturally asks when $(\psi,\{u_c\})$ is a pre-canonical structure on the submodule $U \subset V$.
Since we have
\[ \psi(u_c) \in \tfrac{\overline{d_c}}{d_c} \cdot u_c + \sum_{c'<c} \cA \cdot \tfrac{\overline{d_c}}{d_{c'}}\cdot  u_{c'}\]
it follows that $(\psi,\{u_c\})$ is a pre-canonical structure on $U$ at least  when (i) each $d_c = \overline{d_c}$ and (ii) $d_{c}= q_{c',c}  d_{c'}$ for some $q_{c',c} \in \cA$ whenever $c' < c$ in $C$. Moreover,   the first of these sufficient conditions is also necessary. 
Note that if (i) and (ii) hold then $q_{c',c} = \overline{q_{c',c}}$ and so $q_{c',c} \in \ZZ[v+v^{-1}]$
since $\ZZ[v+v^{-1}]$ is the set of bar invariant elements of $\cA$.

Assume  conditions (i) and (ii) hold
 and further that $V$ admits a canonical basis $\{b_c\}$ with respect to the pre-canonical structure $(\psi,\{a_c\})$. If $U$ also has a canonical basis, then one   asks how it is related to the basis
  $\{a_c\}$; in particular, when does some rescaling of $\{b_c\}$ give a canonical basis for $U$?
By condition (C2) in Definition \ref{canon-def}, it follows that the only possible such basis would be given by $\{ d_c b_c\}$. 
Since
 \[ d_c b_c \in  u_c + \sum_{c'<c} v^{-1}\ZZ[v^{-1}]\cdot q_{c',c} \cdot u_{c'}\]
 it follows that $\{d_cb_c\}$ is a canonical basis for $U$ at least when $q_{c',c} \in \ZZ[v^{-1}]$.
 Since  $\ZZ  = \ZZ[v^{-1}] \cap \ZZ[v+v^{-1}]$, 
 we may summarize this discussion with the following lemma.
%
 
 \begin{lemma} \label{rescale-prop}
 For each index $c \in C$ let $d_c \in \cA$ and define
 \[u_c = d_c  a_c
\qquand U = \cA\spanning\{ u_c : c \in C\}. \] 
 Suppose the following conditions hold:
 \begin{itemize}
 \item[(i)] $d_c \in \ZZ[v+v^{-1}]$ and $d_c\neq 0$ for all $c \in C$.
 \item[(ii)] $d_c/ d_{c'} \in \ZZ $ whenever $c'<c$.
 \end{itemize}
 Then $(\psi,\{u_c\})$ is a pre-canonical structure on $U$. If $\{ b_c\}$ is a canonical basis of $V$ then $\{ d_c b_c\}$ is a canonical basis of $U$.
 \end{lemma}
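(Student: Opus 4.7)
My plan is to closely follow the discussion immediately preceding the lemma, since the author has essentially spelled out all the reasoning there; the proof should amount to a tidy verification. I would split the argument into two parts: first establishing the pre-canonical structure on $U$, then checking that $\{d_c b_c\}$ satisfies conditions (C1) and (C2) of Definition \ref{canon-def}.

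For the first part, the key observation is that $d_c \in \ZZ[v+v^{-1}]$ means $\overline{d_c} = d_c$, so since $\psi$ is $\cA$-antilinear I can compute
\[
\psi(u_c) = \psi(d_c a_c) = \overline{d_c}\,\psi(a_c) = d_c\,\psi(a_c).
\]
Using the unitriangular expansion $\psi(a_c) = a_c + \sum_{c'<c} f_{c',c}\, a_{c'}$ with $f_{c',c} \in \cA$ from the pre-canonical structure on $V$, I would multiply through by $d_c$ and rewrite each $d_c a_{c'} = (d_c/d_{c'})\, u_{c'}$. Condition (ii) ensures $d_c/d_{c'} \in \ZZ \subset \cA$, so the coefficient of $u_{c'}$ lies in $\cA$, giving the unitriangular form $\psi(u_c) \in u_c + \sum_{c'<c} \cA \cdot u_{c'}$. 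This simultaneously shows that $\psi$ restricts to a well-defined $\cA$-antilinear map $U \to U$ (the $u_c$ generate $U$ and their $\psi$-images visibly lie in $U$), and $\psi^2 = 1$ is inherited from $V$.

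For the second part, assume $V$ has the canonical basis $\{b_c\}$, so $\psi(b_c) = b_c$ and $b_c = a_c + \sum_{c'<c} g_{c',c}\, a_{c'}$ with $g_{c',c} \in v^{-1}\ZZ[v^{-1}]$. Bar-invariance of $d_c b_c$ is immediate from bar-invariance of $d_c$:
\[
\psi(d_c b_c) = \overline{d_c}\,\psi(b_c) = d_c b_c,
\]
giving (C1). For (C2), expand
\[
d_c b_c = u_c + \sum_{c'<c} \frac{d_c}{d_{c'}}\, g_{c',c} \cdot u_{c'};
\]
by condition (ii) the scalar $d_c/d_{c'}$ is an integer, and $g_{c',c} \in v^{-1}\ZZ[v^{-1}]$, so the product lies in $v^{-1}\ZZ[v^{-1}]$, as required.

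There is no real obstacle: every step is an unpacking of definitions. The only point worth flagging is that condition (ii) cannot be weakened to $d_c/d_{c'} \in \cA$ (or even to $\ZZ[v+v^{-1}]$); we genuinely need integer ratios, because the coefficient $g_{c',c}$ must remain in $v^{-1}\ZZ[v^{-1}]$ after rescaling, and this is what forces the scalar factor to lie in $\ZZ = \ZZ[v^{-1}] \cap \ZZ[v+v^{-1}]$, exactly as the paragraph preceding the lemma observes.
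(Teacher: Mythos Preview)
Your proposal is correct and follows essentially the same approach as the paper: the paper's proof is precisely the discussion preceding the lemma, and you have done nothing more than write that discussion out as a clean verification of Definitions \ref{precanon-def} and \ref{canon-def}.
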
 
 
 Morphisms between pre-canonical structures should at least include strong isomorphisms and also the $\cA$-linear maps  $D : V \to V'$ 
 given by $D(a_c) = d_c  a_c$ when the conditions hold in the preceding proposition.
 There is a third kind of map which should form a morphism; in particular, it is natural to consider the map $\Phi $ given by \eqref{hh2} to be a morphism between the pre-canonical structures on $\H$ and $\H_2$, as we will see in the following lemma.
 
 Let $\epsilon$ be a ring endomorphism of $\cA$. Such a map is $\ZZ$-linear and  completely determined by its value at $v \in \cA$, which must be a unit, since $\epsilon(v) \epsilon(v^{-1}) = \epsilon(vv^{-1})  = \epsilon(1) = 1$. It follows that $\epsilon(v) = \pm v^n$ for  some $n \in \ZZ$. 
 Call $n$ the \emph{degree} of the endomorphism $\epsilon$.
 We say that a map $\varphi : M\to N$ between $\cA$-modules is \emph{$\epsilon$-linear} if $\varphi(fm) = \epsilon(f) \varphi(m)$ for $f \in \cA$ and $m \in M$.
 
 \begin{lemma}\label{twist-prop}
  Let $\epsilon$ be a ring endomorphism of $\cA$ and write $\tau: V \to V$ and $\phi : V\to V$
 for the respective $\epsilon$-linear
 and
 $\cA$-antilinear 
  maps with 
  \[\tau(a_c) = a_c
  \qquand \phi(a_c) = \tau\circ \psi(a_c)\qquad\text{for $c \in C$.}\]
Then
  $( \phi, \{ a_c\} )$ is another pre-canonical structure on $V$.
If $\{ b_c\}$ is a canonical basis of $V$ relative to $\{\psi,\{a_c\})$ and $\epsilon$ has positive degree, then   $\{ \tau(b_c)\}$ is a canonical basis of $V$ relative to  $(\phi,\{a_c\})$.

%
 \end{lemma}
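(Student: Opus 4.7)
The plan is to reduce everything to the single identity
\[ \overline{\epsilon(f)} \;=\; \epsilon(\overline{f}) \qquad\text{for all } f \in \cA, \]
which follows because $\epsilon$ is determined by $\epsilon(v) = \pm v^n$ (forcing $\epsilon(v^{-1}) = \epsilon(v)^{-1} = \pm v^{-n} = \overline{\epsilon(v)}$) and the bar involution is itself characterized by $v \mapsto v^{-1}$. This identity, together with the $\epsilon$-linearity of $\tau$, is essentially all the arithmetic I will need; in particular, I deliberately avoid inverting $\tau$, which is not possible in general since $\tau$ is not surjective when $n > 1$.

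For part (a), write $\psi(a_c) = \sum_{c' \leq c} f_{c',c} \, a_{c'}$ with $f_{c,c} = 1$. Since $\tau$ fixes each $a_c$, the definition of $\phi$ gives
\[ \phi(a_c) \;=\; \tau(\psi(a_c)) \;=\; a_c + \sum_{c' < c} \epsilon(f_{c',c}) \, a_{c'}, \]
which is the desired unitriangular expansion. To verify $\phi^2 = 1$, I expand
\[ \phi^2(a_c) \;=\; \sum_{c' \leq c} \overline{\epsilon(f_{c',c})} \, \phi(a_{c'}) \;=\; \sum_{c' \leq c} \epsilon(\overline{f_{c',c}}) \, \tau(\psi(a_{c'})) \;=\; \tau\!\left( \sum_{c' \leq c} \overline{f_{c',c}} \, \psi(a_{c'}) \right), \]
using in turn the $\cA$-antilinearity of $\phi$, the displayed commutation identity, and the $\epsilon$-linearity of $\tau$. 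The inner sum equals $\psi^2(a_c) = a_c$, so $\phi^2(a_c) = \tau(a_c) = a_c$ as required.

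For part (b), write $b_c = a_c + \sum_{c' < c} g_{c',c} \, a_{c'}$ with $g_{c',c} \in v^{-1}\ZZ[v^{-1}]$, so
\[ \tau(b_c) \;=\; a_c + \sum_{c' < c} \epsilon(g_{c',c}) \, a_{c'}. \]
Here the positive-degree hypothesis on $\epsilon$ enters decisively: when $\epsilon(v) = \pm v^n$ with $n \geq 1$, the endomorphism $\epsilon$ carries $v^{-1}\ZZ[v^{-1}]$ into $v^{-n}\ZZ[v^{-n}] \subseteq v^{-1}\ZZ[v^{-1}]$, so $\{\tau(b_c)\}$ satisfies condition (C2) of Definition \ref{canon-def}. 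For the $\phi$-invariance, the same three-step manipulation as in part (a) gives $\phi(\tau(b_c)) = \tau(\psi(b_c)) = \tau(b_c)$, since $\psi(b_c) = b_c$.

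The main (quite modest) obstacle is simply bookkeeping the various linearity conventions: $\phi$ is $\cA$-antilinear by construction, $\tau$ is $\epsilon$-linear, and the naive guess $\phi = \tau \circ \psi$ would be wrong off the basis because $\tau \circ \psi$ is neither $\cA$-antilinear nor $\epsilon$-linear in the usual sense. The proof therefore works entirely by verifying the relevant identities on the standard basis and propagating them via antilinearity, using the commutation $\overline{\epsilon(f)} = \epsilon(\overline{f})$ as the glue between the two structures.
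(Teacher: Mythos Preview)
Your proof is correct and supplies exactly the kind of direct verification the paper has in mind: the paper's own proof simply asserts that ``$(\phi,\{a_c\})$ is a pre-canonical structure is clear from the definitions'' and that ``checking that $\{\tau(b_c)\}$ is a canonical basis relative to this structure is straightforward,'' without further detail. Your use of the commutation identity $\overline{\epsilon(f)} = \epsilon(\overline f)$ and the basis-by-basis verification of $\phi^2 = 1$ and of conditions (C1)--(C2) is the natural way to unpack those omitted steps.
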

 
 \begin{proof}
That  $( \phi, \{ a_c\} )$ is a pre-canonical structure is clear from the definitions, and
checking that $\{\tau(b_c)\}$ is a canonical basis relative to this structure
is straightforward.
 \end{proof}

Motivated by the preceding lemmas, we adopt the following   definition.
Let $V$ and $V'$ be free $\cA$-modules with pre-canonical structures $(\psi, \{a_c\})$ and $(\psi',\{a'_c\})$. 
Assume the standard bases $\{a_c\}$ and $\{a'_c\}$ have the same partially ordered index set $(C,\leq)$.

 \begin{definition}\label{morphism-def}
A  map $\varphi : V \to V'$  is a \emph{morphism} of pre-canonical structures if
\ben

\item[(i)] The map $\varphi$ is $\epsilon$-linear for a positive degree ring endomorphism $\epsilon: \cA \to \cA$. 

\item[(ii)] There
are nonzero polynomials $d_c \in \cA$ for  $c \in C$  with $d_c/d_{c'} \in \ZZ$ whenever $c'<c$, such that if
 $D : V \to V$ is the  $\cA$-linear map
with
$D(a_c) = d_c a_c$ for   $c \in C$
then
$ \psi' \circ \varphi = \varphi\circ  \psi ^ D
$, 
where we define
$\psi^D =  D^{-1}\circ  \psi \circ D.$
\een
 \end{definition}
 
 \begin{remark}
The polynomials $d_c$ in condition (ii) automatically belong to $\ZZ[v+v^{-1}]$ 
since the coefficients of $a_c$ in $\varphi^{-1}\circ \psi' \circ \varphi(a_c)$ and in $\psi^D(a_c)$, which must be equal, are 1 and $\overline{d_c} /  {d_c}$ respectively.
This observation and the fact that $d_c/d_{c'} \in \ZZ$ whenever $c'<c$ in $C$ ensure that  $\psi^D$ is a well-defined map $V \to V$, even though $D^{-1}$   may not be.
 \end{remark}
 
 If $\varphi : V \to V'$ is a morphism of pre-canonical structures then we call a map $D : V \to V$ of the form in condition (ii) of Definition \ref{morphism-def} a \emph{scaling factor} of $\varphi$. 
 If $V' \subset V$ and $\varphi$ is equal to one of its scaling factors then we call $\varphi$ a \emph{scaling morphism}.
 We define the \emph{degree} of any morphism $\varphi$  to be the degree of 
 the ring endomorphism $\epsilon$ in condition (i). If $V=V'$ and $\{a_c\} = \{a'_c\}$ and the identity is a scaling factor of $\varphi$, then we call $\varphi$ a \emph{parametric morphism}.
 
 In the rest of this section we describe some properties of morphisms in this sense.
 We fix some notation.
 Let $V$ and $V'$ and $V''$ be free $\cA$-modules with pre-canonical structures $(\{a_c\},\psi)$ and $(\{a'_c\},\psi')$ and $(\{a''_c\},\psi'')$. Assume
 the standard bases of these structures all have the same partially ordered index set $(C,\leq)$,
and suppose $\varphi : V \to V'$ and $\varphi' : V' \to V''$ are morphisms of pre-canonical structures.

 \begin{proposition}
The composition $ V\xrightarrow{\varphi} V' \xrightarrow{\varphi'}  V''$ is  a morphism of pre-canonical structures.
The collection of pre-canonical structures on free $\cA$-modules forms a category. 
\end{proposition}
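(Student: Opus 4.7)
Associativity of composition and the existence of identity morphisms (the identity on $(V,\psi,\{a_c\})$ satisfies Definition~\ref{morphism-def} with $\epsilon = \mathrm{id}_{\cA}$ and trivial scaling factor) are immediate, so the content of the proposition is closure under composition: that $\varphi' \circ \varphi$ satisfies conditions (i) and (ii) of Definition~\ref{morphism-def}.

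Condition (i) is straightforward: the composition of two positive-degree endomorphisms of $\cA$ is again of positive degree, since if $\epsilon(v) = \pm v^n$ and $\epsilon'(v) = \pm v^{n'}$ with $n, n' \ge 1$, then $(\epsilon' \circ \epsilon)(v) = \pm v^{n n'}$.

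Condition (ii) is the substantive step. My plan is to reformulate the scaling factor condition via bar-equivariance after localization. Passing to $V_{\QQ} := V \otimes_{\cA} \QQ(v)$, and similarly for $V'$ and $V''$, the scaling factor $D$ becomes invertible, and the identity $\psi' \circ \varphi = \varphi \circ \psi^D$ is equivalent to the assertion that $\varphi \circ D^{-1} \colon V_{\QQ} \to V'_{\QQ}$ is bar-equivariant (i.e.\ intertwines $\psi$ with $\psi'$). Likewise $\varphi' \circ (D')^{-1}$ is bar-equivariant, so the composite $\varphi' \circ (D')^{-1} \circ \varphi \circ D^{-1}$ is a bar-equivariant $(\epsilon' \circ \epsilon)$-linear map $V_{\QQ} \to V''_{\QQ}$. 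Chasing this through the morphism identity for $\varphi$ yields, over $V_{\QQ}$, an identity of the form
\[
\psi'' \circ (\varphi' \circ \varphi) = (\varphi' \circ \varphi) \circ \psi^{\tilde D}
\]
for a natural operator $\tilde D$ built formally from $D$, $D'$, and $\varphi$.

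The main obstacle is that this $\tilde D$ is generally not diagonal in the standard basis $\{a_c\}$, whereas Definition~\ref{morphism-def} requires a diagonal scaling factor whose entries $\tilde d_c$ are nonzero, bar-invariant, and satisfy $\tilde d_c/\tilde d_{c'} \in \ZZ$ for $c' < c$. To extract such a scaling factor, the plan is to argue inductively over $(C, \le)$: because $\psi$ is unitriangular, the equation $\psi''(\varphi' \varphi)(a_c) = (\varphi' \varphi)(\psi^{\tilde D}(a_c))$ only constrains the ratios $\tilde d_c/\tilde d_{c'}$ for $c' \le c$. Expanding both sides using the morphism identities for $\varphi$ and $\varphi'$, one reads off a linear equation for each such ratio; the hypotheses $d_c/d_{c'}, d'_c/d'_{c'} \in \ZZ$ combine with the $\epsilon'$-linearity of $\varphi'$ and bar-invariance of the $d_c$ and $d'_c$ to force the solution to lie in $\ZZ$, after which the individual $\tilde d_c$ can be chosen bar-invariant. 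Carrying out this induction along $(C,\le)$ produces the required scaling factor for $\varphi' \circ \varphi$, and is the technical heart of the argument.
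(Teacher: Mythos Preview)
The paper omits this proof entirely, calling it ``elementary,'' so there is nothing to compare against directly. Your handling of identities, associativity, and condition (i) is fine. The issue is with condition (ii): you have manufactured an obstacle that is not there.

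Your plan is to compose the bar-equivariant maps $\varphi\circ D^{-1}$ and $\varphi'\circ (D')^{-1}$ over $\QQ(v)$, obtain a non-diagonal operator $\tilde D$, and then run an induction over $(C,\le)$ to extract a diagonal scaling factor from it. But the natural candidate for the composite scaling factor is already diagonal: simply take $\tilde d_c = d_c\, d'_c$ and let $\tilde D : V\to V$ be the $\cA$-linear map with $\tilde D(a_c)=\tilde d_c\, a_c$. This visibly satisfies $\tilde d_c\neq 0$ and $\tilde d_c/\tilde d_{c'} = (d_c/d_{c'})(d'_c/d'_{c'})\in\ZZ$ for $c'<c$. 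The verification of $\psi''\circ(\varphi'\circ\varphi)=(\varphi'\circ\varphi)\circ\psi^{\tilde D}$ is then a one-line coefficient comparison: writing $\psi(a_c)=\sum_{c'\le c} r_{c',c}\,a_{c'}$, the morphism identity for $\varphi$ gives $\psi'(a'_c)=\sum_{c'}(d_c/d_{c'})\,\epsilon(r_{c',c})\,a'_{c'}$, and applying the morphism identity for $\varphi'$ once more gives
\[
\psi''\bigl((\varphi'\circ\varphi)(a_c)\bigr)=\sum_{c'\le c}\frac{d_c}{d_{c'}}\cdot\frac{d'_c}{d'_{c'}}\cdot(\epsilon'\circ\epsilon)(r_{c',c})\,a''_{c'},
\]
which is exactly $(\varphi'\circ\varphi)\bigl(\psi^{\tilde D}(a_c)\bigr)$ because the integer ratios $d_c/d_{c'}$ and $d'_c/d'_{c'}$ are fixed by any ring endomorphism of $\cA$. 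No localization, no induction, no extraction procedure is needed.

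The step you flagged as ``the technical heart of the argument'' is therefore unnecessary, and the inductive extraction you describe is left vague enough that it is not clear it would terminate in a genuine scaling factor. The clean proof is the product formula above; once you see that the integrality of the ratios is exactly what makes them commute past $\epsilon$ and $\epsilon'$, the whole thing collapses to a direct check.
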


 
The proposition follows in an elementary way from the definitions; we omit its proof.
 
 \begin{proposition}
Every morphism of pre-canonical structures is equal to some composition $  \iota \circ \sigma\circ \tau $ where $\iota$ is a strong isomorphism, $\sigma$ is a scaling morphism, and $\tau$ is a parametric morphism.
 \end{proposition}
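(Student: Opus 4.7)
The plan is to peel off the three distinct features of $\varphi$---its $\epsilon$-linearity, its non-trivial scaling factor, and its effect on the basis---and repackage each as one of the required factors. Specifically, the parametric morphism $\tau$ will carry the $\epsilon$-linear twist (via Lemma~\ref{twist-prop}), the scaling morphism $\sigma$ will carry the scaling factor $D$ (via Lemma~\ref{rescale-prop}), and the strong isomorphism $\iota$ will absorb the remaining identification of the rescaled basis with $\{a'_c\}$.

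Write $\varphi : (V,\psi,\{a_c\}) \to (V',\psi',\{a'_c\})$ for the given morphism, $\epsilon$-linear with scaling factor $D(a_c) = d_c a_c$ as in Definition~\ref{morphism-def}. First I would let $\tau : V \to V$ be the $\epsilon$-linear map with $\tau(a_c) = a_c$ for every $c \in C$; Lemma~\ref{twist-prop} then provides a pre-canonical structure $(\phi,\{a_c\})$ on $V$ with $\phi(a_c) = \tau \circ \psi(a_c)$, and $\tau$ is a parametric morphism $(V,\psi,\{a_c\}) \to (V,\phi,\{a_c\})$ essentially by construction. Next, setting $u_c := d_c a_c$ and $U := \cA\spanning\{u_c\}$, I would define $\sigma : V \to U$ as the $\cA$-linear extension of $a_c \mapsto u_c$; Lemma~\ref{rescale-prop} applied to $(V,\phi)$ yields a pre-canonical structure $(\phi|_U,\{u_c\})$ on $U$, and with this target $\sigma$ is manifestly a scaling morphism.

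Finally I would define $\iota : U \to V'$ as the unique $\cA$-linear map satisfying $\iota(u_c) = \varphi(a_c)$ for each $c \in C$. The composition $\iota \circ \sigma \circ \tau$ visibly agrees with $\varphi$ on every $a_c$, and since both sides are $\epsilon$-linear this forces $\iota \circ \sigma \circ \tau = \varphi$ on all of $V$. The hard part will be verifying that $\iota$ is actually a strong isomorphism, which splits into two subclaims: (a) each $\varphi(a_c)$ equals some standard basis element $a'_{\pi(c)}$ of $V'$, where $\pi : C \to C$ is an order-preserving bijection; and (b) $\iota$ intertwines $\phi|_U$ with $\psi'$. I expect (b) to be a routine calculation unpacking $\psi' \circ \varphi = \varphi \circ \psi^D$ together with the definitions of $\phi$ and $\phi|_U$, exploiting $d_c/d_{c'} \in \ZZ$ to ensure that the various rescalings remain in $\cA$. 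The main obstacle is (a): this is where the rigidity built into Definition~\ref{morphism-def} must be used beyond a superficial reading, via the unitriangular/diagonal information recorded in the remark immediately following that definition, which pins down the coefficient of $a_c$ in $\varphi^{-1}\circ\psi'\circ\varphi(a_c)$ and thereby constrains the basis-level shape of $\varphi$.
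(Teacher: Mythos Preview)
Your decomposition is essentially the paper's: define $\tau$ via Lemma~\ref{twist-prop}, then a scaling morphism, then a strong isomorphism sending the rescaled basis to $\{a'_c\}$. The only cosmetic difference is that the paper rescales by $d'_c=\epsilon(d_c)$ rather than by $d_c$; both choices work, since $\epsilon$ preserves $\ZZ[v+v^{-1}]$ and fixes integers, so the ratios $d'_c/d'_{c'}$ are the same integers $d_c/d_{c'}$.

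The substantive divergence is your subclaim~(a). You flag as the ``main obstacle'' proving that each $\varphi(a_c)$ is a standard basis element $a'_{\pi(c)}$, but in the paper's framework this is tacitly part of the definition of a morphism. The paper's proof simply sets $\iota(u_c)=a'_c$ and then asserts that $\iota\circ\sigma\circ\tau$ agrees with $\varphi$ on each $a_c$; this presupposes $\varphi(a_c)=a'_c$, and the remark following Definition~\ref{morphism-def} (which speaks of the coefficient of $a_c$ in $\varphi^{-1}\circ\psi'\circ\varphi(a_c)$ being~$1$) relies on the same reading. Your instinct that~(a) requires argument is correct for the definition taken literally---indeed, the $\cA$-linear map $\cA\to\cA$ with $1\mapsto 2$ satisfies conditions~(i) and~(ii) verbatim but cannot be written as such a composition---so the ``rigidity built into Definition~\ref{morphism-def}'' that you hope to exploit is not actually there. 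Once you read $\varphi(a_c)=a'_c$ as part of the definition, (a) is vacuous and your verification of~(b) is the whole proof.
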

 
 \begin{proof}
  Let $\epsilon$ be the $\cA$-endomorphism of positive degree such that $\varphi$ is $\epsilon$-linear.
Define $\tau  : V \to V$ and $\phi : V \to V$, relative to $(\psi, \{a_c\})$ and $\epsilon$, as in Lemma \ref{twist-prop}. Then $(\psi,\{a_c\})$ and $(\phi,\{a_c\})$ 
are both pre-canonical structures on $V$ and 
$ \tau : V \to V$ is a parametric morphism from the first to the second.

Next, let $D$ be a scaling factor of $\varphi$ so that $D(a_c) = d_c  a_c$
 for
 some
  $d_c \in \ZZ[v+v^{-1}]$ for each $c \in C$. Let $d'_c = \epsilon(d_c)$ and write $\sigma : V \to V$ for the $\cA$-linear map with $\sigma(a_c) =d'_c  a_c$. 
Define $u_c = d'_c  a_c$ and  $U = \cA\spanning\{ u_c : c \in C\}$ as in Lemma \ref{rescale-prop}. Then $(\phi,\{u_c\})$ is a pre-canonical structure on $U$ and the map 
$\sigma : V \to U$ is a scaling morphism from $(\phi,\{a_c\})$ to $(\phi,\{u_c\})$.  

Finally, define $\iota : U \to V'$ as the $\cA$-linear map with $\iota(u_c) = a'_c$
for $c \in C$. This is a strong isomorphism since for any $c \in C$ we have  
\[ \iota \circ \phi(u_c) = d'_c\cdot \iota \circ \tau \circ \psi(a_c)  = \varphi \circ \psi^D(a_c) = \psi' \circ \varphi(a_c) = \psi'(a'_c) = \psi' \circ \iota(u_c).\]
As both $\iota \circ \psi$ and $\psi' \circ \iota$ are   $\cA$-antilinear, this identity shows that the two maps are equal. 
The composition $\iota \circ \sigma\circ \tau$ agrees with $\varphi$ at each basis element $a_c$, and both maps are $\epsilon$-linear, so they are equal.
\end{proof}
 
 \begin{proposition}\label{canon-prop}
 Suppose the pre-canonical structure on $V$ admits a canonical basis $\{b_c\}$. Then the pre-canonical structure on $V'$ also admits a canonical basis $\{ b'_c\}$.
If  $D$ is a scaling factor of $\varphi$ and  $\beta : V \to V$ is the $\cA$-linear map with $\beta(a_c) = b_c$ for each $c \in C$,
  then the composition 
  \[\varphi \circ D^{-1} \circ \beta \circ D \circ \beta^{-1}\]
  is a well-defined map $V \to V'$ which restricts to
   an order-preserving bijection $\{ b_c\} \to\{b'_c\}$.
 \end{proposition}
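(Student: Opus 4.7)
The plan is to define $b'_c$ as the image of $b_c$ under the given composition and directly verify the two defining properties of a canonical basis, invoking Proposition \ref{unique-prop} for uniqueness. The first issue is well-definedness, since $D^{-1}$ is not globally defined on $V$. Writing $b_c = a_c + \sum_{c'<c} f_{c',c}\,a_{c'}$ with $f_{c',c} \in v^{-1}\ZZ[v^{-1}]$, one computes $\beta\circ D\circ\beta^{-1}(b_c) = d_c b_c = d_c a_c + \sum_{c'<c} d_c f_{c',c}\,a_{c'}$, after which $D^{-1}$ yields $\tilde b_c := a_c + \sum_{c'<c} (d_c/d_{c'})\,f_{c',c}\,a_{c'}$; this lies in $V$ precisely because of the integrality hypothesis $d_c/d_{c'} \in \ZZ$ built into Definition \ref{morphism-def}(ii).

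Next I would verify the unitriangular form by applying $\varphi$ to $\tilde b_c$ using its $\epsilon$-linearity, together with the identity $\varphi(a_c) = a'_c$ (built into the factorization constructed in the preceding proposition), to obtain $b'_c = a'_c + \sum_{c'<c} \epsilon\!\left((d_c/d_{c'})\,f_{c',c}\right)\,a'_{c'}$. Since $d_c/d_{c'}\in\ZZ$ is fixed by $\epsilon$ and the positive-degree assumption forces $\epsilon(v^{-k}) = \pm v^{-nk}$ for $n = \deg\epsilon \geq 1$ and $k\geq 1$, each coefficient $\epsilon((d_c/d_{c'})f_{c',c})$ lands in $v^{-1}\ZZ[v^{-1}]$---this is where positive degree is essential. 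For bar-invariance I would use the compatibility $\psi'\circ\varphi = \varphi\circ\psi^D$ to reduce $\psi'(b'_c) = b'_c$ to $\psi^D(\tilde b_c) = \tilde b_c$; since $D\tilde b_c = d_c b_c$ by construction and $\psi(d_c b_c) = \overline{d_c}\,\psi(b_c) = d_c b_c$ (using $\psi(b_c) = b_c$ together with $\overline{d_c} = d_c$, the latter guaranteed by the remark following Definition \ref{morphism-def}), one gets $\psi^D(\tilde b_c) = D^{-1}(d_c b_c) = \tilde b_c$ as required.

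With both canonical-basis properties established, the set $\{b'_c\}$ is a canonical basis of $V'$, and its uniqueness follows from Proposition \ref{unique-prop}; the bijection $b_c \mapsto b'_c$ respects the parameter $c$, hence is order-preserving. The main obstacle I anticipate is keeping straight the interaction between the scaling by $D$ on the source and the twisting by $\epsilon$ between source and target, together with justifying $\varphi(a_c) = a'_c$ cleanly. An alternative and perhaps cleaner route is to apply the decomposition $\varphi = \iota\circ\sigma\circ\tau$ from the preceding proposition and transport the canonical basis through each factor in turn via Lemma \ref{twist-prop} (for the parametric factor $\tau$), Lemma \ref{rescale-prop} (for the scaling factor $\sigma$), and the strong isomorphism property (for $\iota$), and then match the resulting basis with the composition in the statement; this side-steps any direct appeal to $\varphi(a_c) = a'_c$, at the cost of more bookkeeping in the final matching step.
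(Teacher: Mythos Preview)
Your proposal is correct and follows exactly the approach the paper indicates: the paper defines $b'_c = \varphi\circ D^{-1}\circ \beta \circ D \circ \beta^{-1}(b_c)$ and states that verifying the two defining conditions of a canonical basis ``is a simple exercise which is left to the reader,'' and you have carried out precisely that exercise. Your flagged concern about $\varphi(a_c)=a'_c$ is a genuine omission from Definition~\ref{morphism-def} in the paper itself (it is tacitly assumed throughout, including in the factorization proposition), not a flaw in your argument.
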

 
 \begin{proof}
 Let $b'_c = \varphi\circ D^{-1}\circ \beta \circ D \circ \beta^{-1}( b_c)
 $. It suffices to check that this element satisfies the defining conditions of a canonical basis. 
 This is a simple exercise which is left to the reader.
\end{proof}
 
 \begin{proposition}
 A morphism of pre-canonical structures is an isomorphism 
 (that is, there exists a morphism of pre-canonical structures which is its left and right inverse)
  if and only if it has  degree 1 and it has a scaling factor whose eigenvalues are each $\pm 1$.
 \end{proposition}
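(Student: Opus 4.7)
The plan is to apply the decomposition $\varphi = \iota \circ \sigma \circ \tau$ from the preceding proposition and reduce the problem to the invertibility of each factor: since the strong isomorphism $\iota$ is automatically invertible in the category of pre-canonical structures, $\varphi$ is an isomorphism precisely when both the parametric morphism $\tau$ and the scaling morphism $\sigma$ are.

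For the degree condition, I would first observe that if $\varphi^{-1}$ exists as a morphism and is $\epsilon'$-linear, then $\cA$-linearity of $\varphi \circ \varphi^{-1} = \mathrm{id}$ forces $\epsilon \circ \epsilon' = \mathrm{id}_\cA$. A positive-degree ring endomorphism of $\cA$ is an automorphism only when it has degree $1$ (for $n \geq 2$ the image subring fails to contain $v$), so $\deg \varphi = 1$; conversely, when $\deg \epsilon = 1$ the parametric morphism $\tau$ admits the parametric inverse $\tau^{-1}$ fixing each $a_c$ and acting $\epsilon^{-1}$-linearly.

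The heart of the proof is the analysis of when the scaling morphism $\sigma : V \to U = \cA\spanning\{u_c : c \in C\}$, with $u_c = d'_c a_c$ and $d'_c = \epsilon(d_c)$, is an isomorphism. Writing $\phi$ for the bar involution of Lemma \ref{twist-prop} on $V$ and $\phi(a_c) = a_c + \sum_{c' < c} k_{c',c} a_{c'}$, the induced formula $\phi(u_c) = u_c + \sum_{c'<c} (d'_c/d'_{c'}) k_{c',c} u_{c'}$ shows that the candidate inverse $\sigma^{-1} : U \to V$, $u_c \mapsto a_c$, satisfies the morphism axiom precisely when a scaling factor $D''$ exists with $(d''_c/d''_{c'})(d'_c/d'_{c'}) = 1$ whenever $k_{c',c} \neq 0$. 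Combined with the integrality $d''_c/d''_{c'} \in \ZZ$, this forces $d_c/d_{c'} \in \{\pm 1\}$ along every such pair. Since the morphism relation for $\varphi$ depends on $D$ only through these same ratios, one can normalize within each equivalence class of the transitive closure of the relation $c \sim c' \iff k_{c',c} \neq 0$ to produce a new scaling factor of $\varphi$ with every eigenvalue in $\{\pm 1\}$. Conversely, given such a scaling factor, the $d'_c \in \{\pm 1\}$ are units in $\cA$, so $U = V$, $\sigma$ is an $\cA$-linear self-bijection, and the choice $d''_c = d_c$ makes $\sigma^{-1}$ a morphism; combined with the inverses of $\tau$ and $\iota$ this exhibits $\varphi^{-1}$.

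The main obstacle is the rescaling step in the forward direction: one must verify both that the eigenvalue normalization within each bar-equivalence class is realizable by elements of $\ZZ[v+v^{-1}]$ (which follows because within such a class all $d_c$ agree up to sign, so one may replace each by its $\pm 1$ sign), and that the resulting cross-class ratios $d_c/d_{c'}$ automatically satisfy $d_c/d_{c'} \in \ZZ$ (which follows because after normalization every value lies in $\{\pm 1\}$, so every ratio lies in $\{\pm 1\} \subset \ZZ$).
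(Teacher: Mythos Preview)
Your argument is correct, and the core mechanism---showing that along pairs $c'<c$ where the bar involution has a nonzero coefficient the ratio $d_c/d_{c'}$ is forced into $\{\pm 1\}$, then normalizing within the resulting equivalence classes---is exactly the one the paper uses. The organization differs: the paper works directly with $\varphi$ and an assumed inverse $\varphi^{-1}$ (with scaling factor $D'$), sets $D'' = \varphi\circ D\circ\varphi^{-1}$, and computes
\[
\psi' = (D'D'')^{-1}\circ\psi'\circ(D'D''),
\]
from which the ratio constraint on the $d_c$ and the equivalence-class normalization follow. You instead invoke the decomposition $\varphi=\iota\circ\sigma\circ\tau$ from the preceding proposition and reduce everything to the invertibility of the scaling morphism $\sigma$, analyzing when the explicit map $u_c\mapsto a_c$ admits a scaling factor. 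Your route makes clearer why the degree condition and the scaling condition are independent (they govern $\tau$ and $\sigma$ separately), and it reuses the decomposition proposition rather than reproving part of it; the paper's route is more self-contained and avoids the bookkeeping of passing between $(\psi,\{a_c\})$, $(\phi,\{a_c\})$, and $(\phi,\{u_c\})$. Both arrive at the same normalization: choose a representative in each bar-equivalence class and divide, which you correctly note yields eigenvalues in $\{\pm1\}$ and hence integer cross-class ratios automatically.
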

 
 \begin{proof}
If $\varphi$ has degree 1 and a scaling factor $D$ whose eigenvalues are each $\pm 1$, then $D=D^{-1}$ and $\varphi$ is an $\epsilon$-linear bijection (where $\epsilon = \epsilon^{-1}$ is a ring involution of $\cA$) and it follows that the inverse map $\varphi^{-1}$ is well-defined and a morphism of pre-canonical structures with scaling factor 
$\varphi \circ D \circ \varphi^{-1}$. 
Hence in this case $\varphi$ is an isomorphism of pre-canonical structures. 
Suppose conversely that $D$ is a scaling factor for $\varphi$ and that $\varphi^{-1}$ exists and is a morphism with scaling factor $D'$. Then $\varphi$ must have degree 1 since otherwise $\varphi$ is not invertible. To show that $\varphi$ has some scaling factor all of whose eigenvalues are $\pm 1$, let $D'' = \varphi\circ D \circ \varphi^{-1}$.
Then
\[ \psi' = \varphi\circ (\varphi^{-1} \circ \psi' \circ \varphi) \circ \varphi^{-1} = {D''}^{-1} \circ (\varphi \circ \psi \circ\varphi^{-1}) \circ D''
= 
(D'D'')^{-1} \circ \psi' \circ (D'D'').\]
For each $c \in C$ let $d_c$ and $d_c'$ be the elements of $\ZZ[v+v^{-1}]$ such that $D(a_c) = d_c a_c$ and $D'(a'_c) = d'_c  a'_c$.
Now, 
write $\sim$ for the minimal equivalence relation on $C$ such that $c \sim c'$ whenever $c,c' \in C$ such that the coefficient $f_{c',c}$ of $a'_{c'}$ in $\psi'(a'_c)$ is nonzero.  
The equation above implies 
\[f_{c',c} = {d_c}/{d_{c'}} \cdot {d'_c}/{d'_{c'}} \cdot f_{c',c}\]
so since $d_c/d_{c'}$ and $d'_c/d'_{c'}$ are both integers, these quotients must each be $\pm 1$.
Hence if $K$ is an equivalence class under $\sim$ then $d_c /d_{c'} \in \{\pm 1\}$ for any $c,c' \in K$. 
For each such equivalence class $K$, choose an arbitrary $c \in K$ and let $d_K = d_c$. 
Now let $E : V \to V$ be the $\cA$-linear map with $E(a_c) = d_K  a_c$ where $K$ is the equivalence class of $c \in C$. We   claim that 
\[ \psi = E^{-1} \circ \psi \circ E.\]
This follows since if the coefficient of $a_{c'}$ in $\psi(a_c)$ is some polynomial $f \in \cA$, then the 
coefficient of $a_{c'}$ in $E^{-1} \circ \psi \circ E(a_c) $ is $d_K/d_{K'}\cdot f$ where $K$ and $K'$ are the equivalence classes of $c$ and $c'$. If $f=0$ then these coefficients are both zero, and if $f\neq 0$
then the
coefficient of $a'_{c'}$ in $\psi'(a'_c)$ is also nonzero, so $K = K'$ and our coefficients are again equal.
From this claim, we conclude that $E^{-1}  D$ is another scaling factor of $\varphi$. The eigenvalues of this scaling factor are each $\pm 1$ since if $K$ is the equivalence class of $c \in C$ then $d_c / d_K \in \{ \pm 1\}$.
\end{proof}

The following corollary shows that  the structure constants of canonical bases arising from isomorphic pre-canonical structures
 differ only by a factor of $\pm 1$ or the substitution $v \mapsto -v$.

 \begin{corollary}\label{canon-cor} Suppose the pre-canonical structures on $V$ and $V'$ are isomorphic and 
 admit canonical bases $\{b_c\}$ and $\{b'_c\}$. Define $f_{x,y}(t), g_{x,y}(t) \in \ZZ[t]$ such that 
\[ b_y = \sum_{x \leq y} f_{x,y}(v^{-1})  a_{x}
\qquand
 b'_y = \sum_{x \leq y}g_{x,y}(v^{-1})  a'_{x}.
 \]
 Then for each $x,y\in C$ there are $\varepsilon_i \in \{\pm 1\}$ such that  $f_{x,y}(t) = \varepsilon_1\cdot  g_{x,y}(\varepsilon_2    t)$.
 \end{corollary}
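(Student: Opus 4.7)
The plan is to apply Proposition~\ref{canon-prop} after using the preceding proposition to control a chosen isomorphism $\varphi : V \to V'$. By the preceding proposition, $\varphi$ has degree $1$, so it is $\epsilon$-linear for a ring endomorphism with $\epsilon(v) = \varepsilon v$ for some $\varepsilon \in \{\pm 1\}$, and it admits a scaling factor $D$ with $D(a_c) = d_c a_c$ where each $d_c \in \{\pm 1\}$; in particular $D = D^{-1}$. Moreover, from the decomposition $\varphi = \iota \circ \sigma \circ \tau$ established earlier, one reads off that $\varphi(a_c) = a'_c$ for every $c \in C$.

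Proposition~\ref{canon-prop} then provides the map $T = \varphi \circ D^{-1} \circ \beta \circ D \circ \beta^{-1}$, where $\beta : V \to V$ is the $\cA$-linear map with $\beta(a_c) = b_c$. A direct computation---unwinding the composition using $\beta^{-1}(b_y) = a_y$, $D(a_y) = d_y a_y$, and $d_c^2 = 1$---yields
\[ T(b_y) = \varphi\left( a_y + \sum_{x<y} d_x d_y \, f_{x,y}(v^{-1}) \, a_x\right) = a'_y + \sum_{x<y} d_x d_y \, \epsilon(f_{x,y}(v^{-1})) \, a'_x, \]
where the second equality uses the $\epsilon$-linearity of $\varphi$, the relation $\varphi(a_c) = a'_c$, and the fact that the integers $d_x d_y$ are fixed by $\epsilon$. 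Since this element has leading term $a'_y$ and canonical basis elements are uniquely determined by their leading term (Proposition~\ref{unique-prop}), we must have $T(b_y) = b'_y$.

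Finally, comparing this with $b'_y = a'_y + \sum_{x<y} g_{x,y}(v^{-1}) a'_x$ and using $\epsilon(v^{-1}) = \varepsilon v^{-1}$ gives $g_{x,y}(v^{-1}) = d_x d_y \cdot f_{x,y}(\varepsilon v^{-1})$. Setting $t = v^{-1}$ and inverting using $(d_x d_y)^2 = \varepsilon^2 = 1$ produces $f_{x,y}(t) = d_x d_y \cdot g_{x,y}(\varepsilon t)$, which is the desired identity with $\varepsilon_1 = d_x d_y$ and $\varepsilon_2 = \varepsilon$. The only potential obstacle is tracking how $\epsilon$ commutes with polynomial substitution at $v^{-1}$, but this is routine since $\epsilon$ acts on Laurent polynomials either as the identity or as $v \mapsto -v$; all the real work is already packaged into the characterization of isomorphisms and Proposition~\ref{canon-prop}.
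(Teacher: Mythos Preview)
Your proof is correct and follows essentially the same route as the paper's: invoke the preceding proposition to force $\varphi$ to have degree $1$ and a scaling factor $D$ with eigenvalues $\pm 1$, then apply Proposition~\ref{canon-prop}. The paper compresses all of this into two sentences; you have simply unpacked the computation. One small remark: the proof of Proposition~\ref{canon-prop} already shows that the composition $T$ sends $b_y$ to $b'_y$ with the \emph{same} index $y$ (it defines $b'_c := T(b_c)$ and verifies the canonical-basis conditions), so your leading-term argument citing Proposition~\ref{unique-prop} is not needed, though it is not wrong either.
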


\begin{proof}
Let $\varphi : V \to V'$ be an isomorphism of pre-canonical structures. By the previous proposition, $\varphi$ has a scaling factor $D$ whose eigenvalues are all $\pm 1$, and $\varphi$ is $\epsilon$-linear where $\epsilon \in \End(\cA)$ is either the identity or the ring homomorphism with $v \mapsto -v$. Given these considerations, the corollary  follows from Proposition \ref{canon-prop}.
\end{proof}

\subsection{Generic structures on group elements}
\label{4-sect}

In this and the  two sections which follow we consider  Hecke algebra modules of a certain generic form. We are interested in classifying such generic structures, saying which structures admit compatible pre-canonical structures, and identifying when such pre-canonical structures are isomorphic in the sense of Definition \ref{morphism-def}. The solutions to these problems will recover some constructions already studied in the literature, but will also reveal other structures not previously examined.
The unexpected existence of these ``extra'' solutions 
is the primary motivation for our results.

In this   section, the type of generic module structure which we study is a natural generalization of the regular representation of a Hecke algebra.
Our results here are useful 
mostly for comparison with the theorems in the next sections.
 The proofs in this section are only sketched, since they are just simpler versions of the arguments we use to 
 establish the results in Sections \ref{16-sect} and \ref{32-sect}.
 
%

\begin{notation}
If $X$ is a set then we
write $\cA X$ for the free $\cA$-module generated by $X$, and let $\End(\cA X)$ denote the $\cA$-module of $\cA$-linear maps $\cA X \to \cA X$.
A \emph{representation} of $\H$ in some $\cA$-module $\cM$ is an $\cA$-algebra homomorphism $\H \to \End(\cM)$. 
\end{notation}

Consider a $2\times 2$ matrix $\gamma = (\gamma_{ij})$ with entries in $\cA$. Given a Coxeter system $(W,S)$, we let $\rho_\gamma : \{ H_s : s \in S\} \to \End(\cA W)$ denote the map with
\[\label{form-eq0}
\rho_\gamma(H_s) ( w) =
\left\{
\ba 
\gamma_{11}\cdot {s w} \ +\ & \gamma_{12}\cdot w     				&&\quad\text{if $sw > w$} \\
\gamma_{21}\cdot {s w} \ +\ & \gamma_{22}\cdot w			&&\quad\text{if $sw< w$} 
\ea
\right.
\qquad\text{for $s \in S$ and $w \in W$}.
\]

\begin{definition}\label{HW-def}
The matrix $\gamma$ is an \emph{$(\H,W)$-structure}
if  for every Coxeter system $(W,S)$, 
the map $\rho_\gamma$ extends to a representation of $\H=\H(W,S)$ in $\cA W$.
\end{definition}

%
%
%

%
%

An $(\H,W)$-structure $\gamma = (\gamma_{ij})$ is \emph{trivial} if 
$\gamma_{11} = \gamma_{21} = 0$ and $\gamma_{12} = \gamma_{22} \in \{v,-v^{-1}\}$. Such a structure defines an $\H$-representation which decomposes as a direct sum of irreducible submodules given by free $\cA$-modules of rank one.
The definition of $\H$ affords an obvious example of a nontrivial $(\H,W)$-structure: namely, the matrix $\gamma$ with $\gamma_{11} = \gamma_{21} = 1$ and $\gamma_{12} = 0$ and $\gamma_{22} = v-v^{-1}$.

\begin{theorem}\label{HW-thm}
Every nontrivial  $(\H,W)$-structure is  equal to 
\[  \genrep{\alpha}{0}{\alpha^{-1}}{v-v^{-1}} \qquad\text{or}\qquad  \genrep{\alpha}{v-v^{-1}}{\alpha^{-1}}{0}\]
for some unit $\alpha$ in $\cA$.
All nontrivial  $(\H,W)$-structures define isomorphic $\H$-representations.

\end{theorem}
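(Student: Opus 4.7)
The plan is to reduce the classification to a small system of polynomial equations by applying the Hecke relations in elementary test cases, then to exhibit the two isomorphisms needed for the second assertion.

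First I would extract constraints from the quadratic relation $H_s^2 = (v-v^{-1})H_s + 1$. Taking any rank-one Coxeter system $(W,S) = \langle s\rangle$ (or any system containing $s$, applied to some $w$) and computing $\rho_\gamma(H_s)^2(w)$ in both cases $sw > w$ and $sw < w$, then comparing with $(v-v^{-1})\rho_\gamma(H_s)(w) + w$, produces four polynomial identities in $\gamma_{ij}$:
\[
\gamma_{11}(\gamma_{12}+\gamma_{22}) = (v-v^{-1})\gamma_{11},\qquad \gamma_{21}(\gamma_{12}+\gamma_{22}) = (v-v^{-1})\gamma_{21},
\]
together with $\gamma_{11}\gamma_{21} + \gamma_{12}^2 = (v-v^{-1})\gamma_{12}+1$ and the same with $\gamma_{12}\leftrightarrow\gamma_{22}$. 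Subtracting the last two shows $(\gamma_{12}-\gamma_{22})(\gamma_{12}+\gamma_{22}-(v-v^{-1})) = 0$, so the solutions split into (a) $\gamma_{12}=\gamma_{22}$ with $\gamma_{11}=\gamma_{21}=0$ (the trivial case, forcing $\gamma_{12}\in\{v,-v^{-1}\}$ by the quadratic equation), or (b) $\gamma_{12}+\gamma_{22}=v-v^{-1}$ with $\gamma_{11}\gamma_{21}=1+\gamma_{12}\gamma_{22}$.

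Next, to cut down case (b), I would apply the braid relation $H_sH_tH_s = H_tH_sH_t$ in $\H(S_3)$ to the element $w = 1 \in S_3$ and track the Bruhat order at each step. The coefficients of $sts=tst$ automatically agree, but equating the coefficients of $s$ on both sides (the left side produces $\gamma_{11}\gamma_{12}(\gamma_{12}+\gamma_{22})$ and the right side produces $\gamma_{11}\gamma_{12}^2$) yields the clean condition $\gamma_{11}\gamma_{12}\gamma_{22}=0$. In the nontrivial case one verifies that $\gamma_{11}\ne 0$, so either $\gamma_{12}=0$ (giving $\gamma_{22}=v-v^{-1}$ and $\gamma_{11}\gamma_{21}=1$, hence the first matrix form with $\alpha = \gamma_{11}$) or $\gamma_{22}=0$ (giving the second form, again with $\gamma_{11}\gamma_{21}=1$ so $\alpha:=\gamma_{11}$ is a unit).

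It remains to check that both forms are genuine $(\H,W)$-structures for every $(W,S)$, and that all such representations are isomorphic. For the first form with $\alpha=1$, the defining formulas coincide with left multiplication of $\H$ on itself expressed in the standard basis $\{H_w\}$, so it is a valid representation for all $(W,S)$. Rescaling by the $\cA$-linear bijection $w\mapsto \alpha^{-\ell(w)}w$ then intertwines $\rho_{\gamma_{\alpha=1}}$ with the first form for arbitrary $\alpha$, proving both validity and that all first-form structures are mutually isomorphic. For the second form with $\alpha=1$, I would compute the action of $H_s$ on the alternate $\cA$-basis $D_w := (H_{w^{-1}})^{-1}$ of $\H$: using $H_s^{\pm 1} = H_s \mp (v-v^{-1})$ and the decomposition $H_{w^{-1}}=H_{w^{-1}s}H_s$ when $sw<w$, a short calculation yields
\[
H_s\cdot D_w = \begin{cases} D_{sw} + (v-v^{-1})D_w & \text{if } sw>w,\\ D_{sw} & \text{if } sw<w,\end{cases}
\]
which is exactly the second form at $\alpha=1$. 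Hence $w\mapsto D_w$ gives an $\H$-module isomorphism between the two forms (for $\alpha=1$), and composing with the length-rescaling handles general $\alpha$.

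I expect the main obstacle to be the bookkeeping in the braid-relation step: one has to carefully record the six cases arising from the Bruhat comparisons between $1,s,t,st,ts,sts$ when expanding $H_sH_tH_s(1)$, and symmetry with $s\leftrightarrow t$ must be invoked for the simultaneous equation on the coefficient of $t$. Beyond this, the rest is routine polynomial manipulation and an identification with the left regular representation.
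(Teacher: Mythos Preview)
Your proposal is correct and matches the paper's approach: the paper also identifies the two matrix families with the action of $H_s$ on the bases $\{\alpha^{-\ell(w)}H_w\}$ and $\{\alpha^{-\ell(w)}\overline{H_w}\}$ of $\H$ (your $D_w=(H_{w^{-1}})^{-1}$ is precisely $\overline{H_w}$), and derives the classification from the quadratic and braid relations by the same method, deferring details to the analogous but harder argument for Theorem~\ref{HI-thm} via Lemma~\ref{eqlem1}. The one step you leave implicit---that $\gamma_{11}\neq 0$ in case~(b)---does require a braid computation at an element other than $w=1$ (for instance at $w=s$, which yields $\gamma_{21}\gamma_{12}\gamma_{22}=0$, and then a direct contradiction when $\gamma_{11}=\gamma_{21}=0$ with $\gamma_{12}\neq\gamma_{22}$), but this is indeed routine as you indicate.
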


Recall that the units in the ring $\cA$ are the monomials of the form $\pm v^n$ for   $n \in \ZZ$.

\begin{proof}[Proof sketch]
The given matrices are $(\H,W)$-structures, since those on the left (respectively, right)   describe the action of $H_s$ for $s \in S$ on the basis $ \{ \alpha^{-\ell(w)} H_w : w \in W\}$ (respectively, $\{ \alpha^{-\ell(w)} \overline{H_w} :w \in W \}$) of $\H$. 
The corresponding $\H$-representations are evidently all isomorphic to the regular representation of $\H$ on itself.
That there are no other nontrivial $(\H,W)$-structures     follows by a simpler version of the argument used in the proof of Theorem \ref{HI-thm}. 
\end{proof}

An $(\H,W)$-structure  $\gamma$
defines an $\H$-module structure on $\cA W$ for every Coxeter system $(W,S)$. We say that $\gamma$ is \emph{pre-canonical} if each of these $\H$-modules has a pre-canonical $\H$-module structure   in which $W$ partially ordered by the Bruhat order is the ``standard basis.'' 
%
It follows from the preceding theorem and Lemma \ref{uniquepre-lem} that if $\gamma$ is nontrivial and pre-canonical, then there is a unique bar involution $\psi : \cA W \to \cA W$ such that $(\psi,W)$ is a pre-canonical $\H$-module structure. By Theorem \ref{canon-thm}, this pre-canonical structure always admits a canonical basis.


\begin{theorem}\label{4-thm}
Exactly 4 nontrivial $(\H,W)$-structures are pre-canonical. The 4 associated pre-canonical structures on $\cA W$ are all isomorphic (in the sense of Definition \ref{morphism-def}) to the pre-canonical structure on $\H$ given in Theorem-Definition \ref{kl-thm}. 
\end{theorem}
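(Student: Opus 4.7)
My plan is to proceed in two parts: first, classify the pre-canonical $(\H,W)$-structures by pinning down the unique candidate bar involution on $\cA W$ for each $\gamma$ produced by Theorem \ref{HW-thm} and extracting necessary/sufficient conditions on $\alpha$; second, exhibit explicit isomorphisms from each of the resulting four pre-canonical structures to the Kazhdan-Lusztig structure of Theorem-Definition \ref{kl-thm} in the sense of Definition \ref{morphism-def}.

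By Theorem \ref{HW-thm} and the remarks preceding its proof, every nontrivial $(\H,W)$-structure $\gamma$ comes equipped with a canonical $\H$-module isomorphism $\Phi:\cA W\to \H$ sending $w$ either to $\alpha^{-\ell(w)}H_w$ (first matrix form) or to $\alpha^{-\ell(w)}\overline{H_w}$ (second matrix form), for some unit $\alpha\in\cA$.  Since $\cA W$ is generated as an $\H$-module by the minimum element $1\in W$, Lemma \ref{uniquepre-lem} shows that a compatible bar involution $\psi$ on $\cA W$ is unique if it exists; the only candidate is $\psi = \Phi^{-1}\circ (\overline{\ \cdot\ })\circ \Phi$, which automatically satisfies $\psi^2=1$ and commutes with the $\H$-action (since $\Phi$ is $\H$-linear and the bar involution on $\H$ is $\cA$-antilinear).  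It remains to test the unitriangular condition $\psi(w)\in w+\sum_{y<w}\cA\cdot y$.  Writing $\overline{H_w} = H_w + \sum_{y<w}r_{y,w}H_y$ and applying $\Phi^{-1}$, one finds that in either matrix form the coefficient of $w$ in $\psi(w)$ equals $(\alpha/\overline\alpha)^{\ell(w)}$.  Taking $w\in S$ forces $\alpha=\overline\alpha$; since the units of $\cA$ are $\pm v^n$, this selects exactly $\alpha\in\{+1,-1\}$.  For these two values of $\alpha$ the remaining coefficients $\alpha^{\ell(y)-\ell(w)}r_{y,w}$ automatically lie in $\cA$, so the unitriangular condition holds.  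This produces exactly $2\times 2 = 4$ pre-canonical nontrivial $(\H,W)$-structures.

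For the isomorphism claim, each pre-canonical structure carries the $\H$-module isomorphism $\Phi$ above.  In the first matrix form, $\Phi$ factors as the scaling morphism $\sigma:\cA W\to \cA W$, $w\mapsto \alpha^{-\ell(w)} w$, followed by the strong isomorphism $\iota:\cA W\to\H$, $w\mapsto H_w$.  The scaling coefficients $d_w = \alpha^{-\ell(w)} = (\pm 1)^{\ell(w)}$ lie in $\ZZ\subset\ZZ[v+v^{-1}]$ with all ratios in $\ZZ$, so $\sigma$ is a legitimate scaling morphism; the composition has degree $1$ and a scaling factor with all eigenvalues $\pm 1$, hence is an isomorphism of pre-canonical structures by the characterization of isomorphisms proved at the end of Section \ref{morph-sect}.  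In the second matrix form the analogous factorization produces an isomorphism from the given structure to the auxiliary pre-canonical structure $(\overline{\ \cdot\ },\{\overline{H_w}\})$ on $\H$, and it remains only to compare this auxiliary structure with the KL one $(\overline{\ \cdot\ },\{H_w\})$.  For this I will show that the identity map $\mathrm{id}:\H\to \H$ is an isomorphism: taking $D=\mathrm{id}$ as the scaling factor, condition (ii) of Definition \ref{morphism-def} reduces to the tautology $\overline{\ \cdot\ }\circ\mathrm{id} = \mathrm{id}\circ \overline{\ \cdot\ }$, and the degree and eigenvalue conditions are trivially met.

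The main obstacle is the bookkeeping in the last paragraph: I need to check carefully that my factorizations of $\Phi$ satisfy the somewhat delicate scaling-factor condition in Definition \ref{morphism-def}, and in particular that the identity map between the two natural pre-canonical structures on $\H$ qualifies as a morphism even though it does not send standard basis to standard basis.  Everything else is a direct unwinding of the formulas in Theorem \ref{HW-thm} together with the fact that $\alpha^{\pm 1} \in \{\pm 1\}$ makes all the relevant scaling coefficients integers.
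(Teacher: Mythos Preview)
Your classification of the four structures is essentially the same as the paper's, and your treatment of the first matrix form is fine. The gap is in the second matrix form.

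You propose that the identity map on $\H$ is an isomorphism from the auxiliary structure $(\overline{\,\cdot\,},\{\overline{H_w}\})$ to the Kazhdan--Lusztig structure $(\overline{\,\cdot\,},\{H_w\})$, appealing to a literal reading of Definition \ref{morphism-def}. But the paper's notion of morphism implicitly requires $\varphi(a_c)=a'_c$: this is the content of the decomposition proposition (every morphism equals $\iota\circ\sigma\circ\tau$ with $\iota\circ\sigma\circ\tau(a_c)=a'_c$), and without it Proposition \ref{canon-prop} and Corollary \ref{canon-cor} would both fail. Concretely, the canonical basis of your auxiliary structure has $b_s=\overline{H_s}-v^{-1}\overline{H_1}=H_s-v$, which the identity certainly does not carry to $\underline H_s=H_s+v^{-1}$, contradicting Proposition \ref{canon-prop} if the identity were really a morphism. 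You flag this yourself as ``the main obstacle,'' and it is a real one: the identity map does \emph{not} qualify.

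What the paper does instead is take the $\cA$-linear map $\varphi:\overline{H_w}\mapsto H_w$ (equivalently, on $\cA W$, the map $w\mapsto H_w$) and use as scaling factor $D$ the map with $d_w=(-1)^{\ell(w)}$. Checking condition (ii) of Definition \ref{morphism-def} for this choice unwinds to the identity $R_{y,w}=(-1)^{\ell(w)-\ell(y)}\,\overline{R_{y,w}}$ for the $R$-polynomials, which is exactly \cite[Lemma 2.1(i)]{KL}. This nontrivial symmetry is the missing ingredient in your argument; once it is supplied, the rest of your plan goes through.
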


\begin{proof}[Proof sketch]
The proof is similar to that of  Theorem \ref{16-thm}. 
Let $\gamma$ be a nontrivial, pre-canonical $(\H,W)$-structure. Then $\gamma$ must be one of the two matrices in Theorem \ref{HW-thm} for some unit $\alpha \in \cA$. One first argues that $\alpha = \overline{\alpha}$ so that $\alpha \in \{ \pm 1\}$. Next, one observes that $\gamma$ remains pre-canonical if  $\alpha$ is replaced with $-\alpha$,
and that the  pre-canonical structures associated to these two $(\H,W)$-structures are always isomorphic. One may therefore assume $\alpha = 1$. It remains to prove that if $\gamma$ is the right-hand matrix in Theorem \ref{HW-thm} then its associated pre-canonical structure is isomorphic to the pre-canonical structure on $\H$ given  in Theorem-Definition \ref{kl-thm}.
This can be deduced from \cite[Lemma 2.1(i)]{KL}, after noting that the $\cA$-linear map with $w \mapsto \overline{H_w}$ 
 defines an isomorphism between $\cA W$ viewed as an $\H$-module via $\gamma$ and $\H$  viewed as a left module over itself.
\end{proof}

\subsection{Generic structures  on twisted involutions}
\label{16-sect}

In this section we introduce a second kind of generic  structure, which concerns Hecke algebra modules  on the space of twisted involutions in a Coxeter group.
Quite nontrivial results of Lusztig and Vogan \cite{LV2,LV1,LV6} provide interesting examples of this type of generic structure, which is what motivates their study.
Our results here depend on Lusztig and Vogan's work, which we review in Section \ref{exist-sect}. For this reason, we defer most proofs  to Section \ref{uniqueproof-sect}.

Consider a $4\times 2$ matrix $\gamma = (\gamma_{ij})$ with entries in $\cA$. Given a Coxeter system $(W,S)$, writing $\I = \I(W,S)$, we let $\rho_\gamma : \{ H_s : s \in S\} \to \End(\cA \I)$ denote the map with
\[\label{form-eq1} \rho_\gamma(H_s) (w) =
\left\{
\ba 
\gamma_{11}\cdot {s  ws} \ +\ & \gamma_{12}\cdot w     				&&\quad\text{if $s\act w = sws > w$} \\
\gamma_{21}\cdot {s  ws} \ +\ &  \gamma_{22}\cdot w			&&\quad\text{if $s\act w = sws< w$} \\
\gamma_{31}\cdot {s w} \ +\ &  \gamma_{32}\cdot w			&&\quad\text{if $s\act w =sw > w$} \\
\gamma_{41}\cdot {s w} \ +\ & \gamma_{42} \cdot w 	&&\quad\text{if $s\act w =sw < w$}
\ea
\right.
\qquad\text{for $s \in S$ and $w \in \I$}.
\]

\begin{definition}\label{HI-def}
The matrix $\gamma$ is an \emph{$(\H,\I)$-structure}
if for every Coxeter system $(W,S)$, the map
$\rho_\gamma$ extends to a representation of $\H=\H(W,S)$ in $\cA\I = \cA \I(W,S)$.
 \end{definition}
 
 It would make sense to view $\rho_\gamma$ as a map $ \{ H_s : s \in S\} \to \End(\cA W)$ by the same formula. However, combining some computations with the analysis in Section \ref{16proofs-sect}, one can show that $\rho_\gamma$ only extends to a representation of $\H$ in $\cA W$ for every Coxeter system $(W,S)$ when $\gamma$ is \emph{trivial}, where we say that $\gamma = (\gamma_{ij})$ is trivial if 
$\gamma_{11} = \gamma_{21} =\gamma_{31} = \gamma_{41} = 0$ and $\gamma_{12} = \gamma_{22}=\gamma_{32} = \gamma_{42} \in \{v,-v^{-1}\}$. 
Before we can classify the nontrivial $(\H,\I)$-structures, we need to describe the following basic notation of equivalence between  structures:

\begin{lemma}\label{diag-lem} Let $A,B,C,D,E,F,G,H \in \cA$ and suppose $\alpha,\beta \in \QQ(v)-\{0\}$ 
such that $A\alpha^{-1} $ and $C\alpha$ and $E\beta^{-1}$ and $G\beta$ all belong to $\cA$.
Let 
\[
\gamma = \menrep{A}{B}{C}{D}{E}{F}{G}{H}
\qquand
\gamma[\alpha,\beta]=  \menrep{A\alpha^{-1}}{B}{C\alpha}{D}{E\beta^{-1}}{F}{G\beta}{H}
.\]
If $\gamma$ is a $(\H,\I)$-structure then so is $\gamma[\alpha,\beta]$.
In this case, we say that $\gamma$ and $\gamma[\alpha,\beta]$ are \emph{diagonally equivalent}.
If $\alpha,\beta \in \cA$ then $\gamma$ and $\gamma[\alpha,\beta]$ define isomorphic representations of $\H$.

\end{lemma}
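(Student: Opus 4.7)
The plan is to establish both claims by constructing an explicit intertwiner that rescales each basis element $w\in\I$ by a monomial in $\alpha$ and $\beta$. Inspecting the four cases of Definition \ref{HI-def}, a $\QQ(v)$-linear map of the form $\varphi(w) = c_w \cdot w$ will intertwine the two actions of each $H_s$ exactly when
\[ c_{sws}/c_w = \alpha^{\pm 1} \text{ when } s\act w = sws, \qquand c_{sw}/c_w = \beta^{\pm 1} \text{ when } s\act w = sw, \]
the sign in each case matching that of $\ell(s\act w) - \ell(w)$. The task thus reduces to producing such a scaling function $c : \I \to \QQ(v)\setminus\{0\}$.

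Set $a(w) = \ell(w) - \rho(w)$ and $b(w) = 2\rho(w) - \ell(w)$ for $w \in \I$, and let $c_w = \alpha^{a(w)}\beta^{b(w)}$. The heart of the argument is verifying that a type~I ascent $w\mapsto sws$ (where $sw\ne ws$ and $sws > w$) increases $a$ by $1$ while fixing $b$, while a type~II ascent $w\mapsto sw$ (where $sw = ws$ and $sw > w$) increases $b$ by $1$ while fixing $a$, with descents having the reversed effect. By Theorem \ref{rho-def}, $\rho$ changes by $\pm 1$ at each $\act$-step, so it suffices to check that $\ell(sws) = \ell(w) + 2$ in the type~I ascending case and $\ell(sw) = \ell(w) + 1$ in the type~II ascending case. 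The second identity is immediate from the descent equivalence in Theorem \ref{rho-def}, and the first follows from the standard Coxeter trichotomy for $w\mapsto sws$: either $\ell(sws) = \ell(w) \pm 2$ or $sws = w$, with the latter forcing $sw = ws$ and hence excluded. An induction on $\rho(w)$ using these recursions also yields $a(w), b(w) \ge 0$, so in particular $c_w \in \cA$ whenever $\alpha, \beta \in \cA$.

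With $c_w$ so chosen, the intertwining identity $\varphi(H_s \cdot_{\gamma[\alpha,\beta]} w) = H_s \cdot_\gamma \varphi(w)$ holds for every $s\in S$ and $w\in\I$ by construction, so the braid and quadratic relations of $\H$ satisfied by $\cdot_\gamma$ are automatically satisfied by $\cdot_{\gamma[\alpha,\beta]}$. Combined with the standing hypothesis that $A\alpha^{-1},\ C\alpha,\ E\beta^{-1},\ G\beta \in \cA$, which ensures that $\cdot_{\gamma[\alpha,\beta]}$ preserves $\cA\I$, this shows $\gamma[\alpha,\beta]$ is an $(\H,\I)$-structure. For the final assertion, when $\alpha, \beta$ are units of $\cA$ each $c_w$ is also a unit, so $\varphi$ restricts to an $\cA$-linear isomorphism $\cA\I \to \cA\I$ intertwining the two $\H$-module structures. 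The main obstacle is the Coxeter length trichotomy used in the second paragraph; beyond that, everything reduces to routine case-by-case verifications.
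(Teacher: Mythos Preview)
Your approach is essentially identical to the paper's: you both construct the same diagonal intertwiner $T(w)=\alpha^{\ell(w)-\rho(w)}\beta^{2\rho(w)-\ell(w)}w$ over $\QQ(v)$ and observe that $\rho_{\gamma[\alpha,\beta]}=T^{-1}\circ\rho_\gamma\circ T$. You supply more detail than the paper does, explicitly verifying via Theorem~\ref{rho-def} and the $\ell(sws)=\ell(w)\pm 2$ trichotomy that the exponents $a(w),b(w)$ satisfy the required recursions (and are nonnegative).

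One small discrepancy: for the final isomorphism claim you assume $\alpha,\beta$ are \emph{units} of $\cA$, whereas the lemma only says $\alpha,\beta\in\cA$. Your argument genuinely needs units for $\varphi$ to be invertible over $\cA$; the paper's proof, for its part, does not explicitly address this final sentence at all. Since the lemma is only ever applied in the paper with $\alpha,\beta\in\{\pm 1\}$, this is harmless in context.
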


\begin{proof}
Assume $\gamma$ is an $(\H,\I)$-structure.
The $\H$-representation $\rho_\gamma$ extends to a representation in the larger $\cA$-module $\QQ(v)\I$ 
by linearity. Define $T : \QQ(v) \I \to \QQ(v)\I$ as the $\QQ(v)$-linear map with \[T(w) = \alpha^{\ell(w) - \rho(w)}\cdot  \beta^{2\rho(w)-\ell(w)}\cdot w\qquad\text{for $w \in \I$}\]
where
on the right
$\rho : \I \to \NN$ is defined as in Theorem \ref{rho-def}.
 Then $\gamma[\alpha,\beta]$ is an $(\H,\I)$-structure since 
$\rho_{\gamma[\alpha,\beta]}(H) =T^{-1} \circ \rho_{\gamma}(H) \circ T$ for all $H \in \H$.
\end{proof}

Let $u=v-v^{-1}$ and define four   $4\times 2$ matrices as follows:
\[ \Gamma= \menrep{1}{0}{1}{u}{1}{1}{u}{u-1} \qquad  \Gamma'= \menrep{1}{u}{1}{0}{1}{u-1}{u}{1}
\qquad 
 \Gamma''=\menrep{1}{0}{1}{u}{1}{-1}{-u}{u+1} \qquad \Gamma'''= \menrep{1}{u}{1}{0}{1}{u+1}{-u}{-1}.\]
The proof of the following theorem will be given in Section \ref{16proofs-sect}.

\begin{theorem}\label{HI-thm}
Each of $\Gamma$, $\Gamma'$, $\Gamma''$, and $\Gamma'''$ is an $(\H,\I)$-structure and every nontrivial $(\H,\I)$-structure is diagonally equivalent to one of these.
\end{theorem}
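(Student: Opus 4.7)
The plan has two parts: verifying that each of $\Gamma$, $\Gamma'$, $\Gamma''$, $\Gamma'''$ is an $(\H,\I)$-structure, and then classifying all nontrivial $(\H,\I)$-structures up to the diagonal equivalence of Lemma \ref{diag-lem}. For the existence half, the structures $\Gamma$ and $\Gamma'$ arise from Lusztig and Vogan's $\H_2$-module construction on $\cA\I$ (to be reviewed in Section \ref{lv-sect}), while $\Gamma''$ and $\Gamma'''$ arise from the variant of Section \ref{precanon3-sect}. Once those constructions are in place, one simply reads off the $4\times 2$ matrix of structure constants in the form of Definition \ref{HI-def} and verifies that it matches the specified matrices.

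For the uniqueness half, let $\gamma$ be an arbitrary nontrivial $(\H,\I)$-structure. The quadratic relation $\rho_\gamma(H_s)^2 = 1 + u\,\rho_\gamma(H_s)$ (with $u = v - v^{-1}$) preserves each two-dimensional subspace $\cA\{w, s\act w\}$, so the $2\times 2$ transition matrix of $H_s$ must satisfy $M^2 = 1 + uM$; reading off trace and determinant in both the $sw\ne ws$ case and the $sw = ws$ case yields
\[ \gamma_{12}+\gamma_{22} = \gamma_{32}+\gamma_{42} = u, \qquad \gamma_{11}\gamma_{21} = 1 + u\gamma_{12} - \gamma_{12}^2, \qquad \gamma_{31}\gamma_{41} = 1 + u\gamma_{32} - \gamma_{32}^2. \]
Applying next the braid relation $H_sH_tH_s = H_tH_sH_t$ at $w = 1$ in a rank-$2$ parabolic with $m(s,t) = 3$ (so $s,t,sts=tst$ are the relevant twisted involutions), and comparing $s$-coefficients on both sides, the quadratic identities above collapse the equality to the single polynomial identity $(\gamma_{32} - \gamma_{12})^2 = 1$. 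I then apply the braid relation $(H_sH_t)^2 = (H_tH_s)^2$ at $w = 1$ in a rank-$2$ parabolic with $m(s,t) = 4$ (noting $stst = tsts$), and after expanding both sides the $s$-coefficient of the difference simplifies, modulo all earlier constraints, to $\pm\gamma_{12}(\gamma_{12} - u)$, forcing $\gamma_{12} \in \{0, u\}$. Combined with $(\gamma_{32} - \gamma_{12})^2 = 1$, the pair $(\gamma_{12},\gamma_{32})$ must then lie in $\{(0,1),(0,-1),(u,u-1),(u,u+1)\}$, exactly matching the second columns of $\Gamma, \Gamma'', \Gamma', \Gamma'''$ respectively.

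In each of the four cases, the products $\gamma_{11}\gamma_{21} = 1$ and $\gamma_{31}\gamma_{41} = \pm u$ are nonzero elements of $\cA$, so Lemma \ref{diag-lem} applied with $\alpha = \gamma_{11}$ and $\beta = \gamma_{31}$ normalizes $\gamma_{11}$ and $\gamma_{31}$ to $1$ (and consequently $\gamma_{21}, \gamma_{41}$ to their values in the target matrix) while leaving the invariant second-column entries untouched, yielding one of the four listed matrices on the nose. Degenerate possibilities such as $\gamma_{11} = 0$ or $\gamma_{31} = 0$ are excluded because the quadratic relation would then force $\gamma_{12}$ or $\gamma_{32}$ into $\{v,-v^{-1}\}$, incompatible with the constraints $\gamma_{12} \in \{0,u\}$ and $(\gamma_{32}-\gamma_{12})^2 = 1$. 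The main obstacle is the careful bookkeeping inside the length-$4$ braid computation: each of the eight elements of the dihedral group of order $8$ must be assigned the correct row of Definition \ref{HI-def} at each application of $H_s$ or $H_t$, and the factor $\gamma_{12}(\gamma_{12}-u)$ only emerges after systematic substitution of the quadratic and $m=3$-braid constraints into a sum that initially involves all four parameters $\gamma_{11},\gamma_{12},\gamma_{31},\gamma_{32}$.
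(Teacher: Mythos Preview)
Your overall strategy matches the paper's: derive constraints from the quadratic relation and from the braid relations in dihedral parabolics with $m=3$ and $m=4$, then normalize via diagonal equivalence. The paper packages these computations into Lemma~\ref{eqlem1} and proceeds by a case split on which of $\gamma_{11}\gamma_{21}$, $\gamma_{31}\gamma_{41}$ vanish.

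However, your exclusion of the degenerate cases is circular. When you expand the $m=4$ braid relation at $w=1$ and compare coefficients, the relevant identity carries a prefactor of $\gamma_{11}\gamma_{31}$ (in the paper's notation, the factor $AE$ in the proof of Lemma~\ref{eqlem1}(e)); likewise, the $m=3$ identity $(\gamma_{32}-\gamma_{12})^2=1$ only follows after dividing through by $\gamma_{31}$ or $\gamma_{41}$, and after using $\gamma_{32}+\gamma_{42}=u$, which itself requires one of those to be nonzero. So the constraints $\gamma_{12}\in\{0,u\}$ and $(\gamma_{32}-\gamma_{12})^2=1$ are only available once you already know those first-column entries are nonzero, and you cannot then invoke them to rule out $\gamma_{11}=0$ or $\gamma_{31}=0$. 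The paper does not give a pen-and-paper argument here either: it separately enumerates the finitely many possibilities in the cases $\gamma_{11}\gamma_{21}=0$ and $\gamma_{31}\gamma_{41}=0$ (after normalizing the surviving entries via Lemma~\ref{diag-lem}) and verifies by computer, in finite rank-$3$ Coxeter systems, that none of them define $\H$-representations except the two trivial structures. You need either to supply that verification or to locate additional braid identities (at other base points $w$, or other coefficients) that survive the vanishing of $\gamma_{11}$ or $\gamma_{31}$.

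A minor point on the existence half: the paper does not read $\Gamma$ and $\Gamma'$ directly from Lusztig--Vogan's module $\cL$, since that is an $\H_2$-module rather than an $\H$-module. It first constructs the $\H$-module $\cI$ from $\cL$ (Theorem-Definition~\ref{m2-thm}), reads $\Gamma$ and $\Gamma'''$ off from the bases $\{I_w\}$ and $\{\overline{I_w}\}$ of $\cI$, and then obtains $\Gamma'=\Theta(\Gamma)[-1,-1]$ and $\Gamma''=\Theta(\Gamma''')[-1,-1]$ via the involution $\Theta$ of Lemma~\ref{theta-lem} combined with a sign twist from Lemma~\ref{diag-lem}.
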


An $(\H,\I)$-structure  $\gamma$
defines 
an $\H$-module structure on $\cA \I$ for every Coxeter system $(W,S)$. 
Analogous to our definition for $(\H,W)$-structures, we say that $\gamma$ is \emph{pre-canonical} if each of these $\H$-modules has a pre-canonical $\H$-module structure  in which $\I$ partially ordered by the Bruhat order is the ``standard basis.'' 
We have the same remark as concerned pre-canonical $(\H,W)$-structures:
by the preceding theorem and Lemma \ref{uniquepre-lem}, if $\gamma$ is a nontrivial, pre-canonical $(\H,\I)$-structure, then for each choice of Coxeter system $(W,S)$ there is a unique bar involution $\psi : \cA \I \to \cA \I$ such that $(\psi,\I)$ is a pre-canonical $\H$-module structure, and this structure always admits a canonical basis.
 We  have this analogue of Theorem \ref{4-thm}, whose proof will be given in Section \ref{16proofs-sect}.

\begin{theorem}\label{16-thm}
Exactly 16 nontrivial $(\H,\I)$-structures are pre-canonical; in particular, each of $\Gamma$, $\Gamma'$, $\Gamma''$, and $\Gamma'''$ is pre-canonical. However,  the 16 associated pre-canonical structures on $\cA \I$ are all isomorphic (in the sense of Definition \ref{morphism-def}). 
\end{theorem}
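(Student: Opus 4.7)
The plan is to mirror the proof sketch of Theorem \ref{4-thm}, with Theorem \ref{HI-thm} replacing Theorem \ref{HW-thm} as the classification step. By Theorem \ref{HI-thm}, every nontrivial $(\H,\I)$-structure is diagonally equivalent to $\Gamma_\star[\alpha,\beta]$ for some $\Gamma_\star \in \{\Gamma,\Gamma',\Gamma'',\Gamma'''\}$ and $\alpha,\beta \in \QQ(v)^\times$. Inspecting which diagonally equivalent matrices have all entries in $\cA$ forces $\alpha$ and $\beta$ to be units of $\cA$, hence each of the form $\pm v^k$ for some $k \in \ZZ$.

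Now suppose $\gamma = \Gamma_\star[\alpha,\beta]$ is pre-canonical with bar involution $\psi$. Since $1 \in \I$ is Bruhat-minimal and generates $\cA\I$ as an $\H$-module via iterated $s\act$-operations, Lemma \ref{uniquepre-lem} implies that $\psi$ is uniquely determined by $\psi(1) = 1$ together with the compatibility relation $\psi(H_s\cdot x) = \overline{H_s}\cdot\psi(x)$. Inducting on $\rho(w)$, for each $w = s\act w'$ with $\rho(w) > \rho(w')$ one computes $\psi(w)$ by inverting the appropriate row of $\Gamma_\star[\alpha,\beta]$; a direct calculation shows that the coefficient of $w$ in the resulting expression equals $\overline{\alpha}/\alpha$ when $w = sw's$ and $\overline{\beta}/\beta$ when $w = sw'$. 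Unitriangularity forces each of these to equal $1$, so $\overline{\alpha} = \alpha$ and $\overline{\beta} = \beta$; combined with $\alpha,\beta$ being units, this gives $\alpha,\beta \in \{\pm 1\}$. Hence at most $4\cdot 2\cdot 2 = 16$ nontrivial $(\H,\I)$-structures are pre-canonical.

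To establish the converse, as well as the mutual isomorphism of the $16$ resulting pre-canonical structures, we reduce to the single case $\Gamma[1,1]$, whose pre-canonicality is the content of Lusztig and Vogan's construction reviewed in Section \ref{lv-sect}. The sign flips $\alpha\mapsto-\alpha$ and $\beta\mapsto-\beta$ are effected by the scaling morphisms $D_\alpha(w)=(-1)^{\ell(w)+\rho(w)}w$ and $D_\beta(w)=(-1)^{\ell(w)}w$: indeed, $\ell+\rho$ changes by an odd integer in Case A ($s\act w = sws$) and by an even integer in Case B ($s\act w = sw$), while $\ell$ alone exhibits the opposite parity behavior, so conjugation by $D_\alpha$ (respectively $D_\beta$) precisely negates the $\alpha^{\pm 1}$-entries (respectively $\beta^{\pm 1}$-entries) of the defining matrix without altering anything else. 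Both factors take values in $\{\pm 1\} \subset \ZZ[v+v^{-1}]$, so they satisfy the hypotheses of Definition \ref{morphism-def}. The four base matrices are then interconverted via a combination of parametric morphisms from Lemma \ref{twist-prop} based on the ring endomorphism $v\mapsto -v$ (which, together with appropriate scaling, pairs $\Gamma\leftrightarrow\Gamma''$ and $\Gamma'\leftrightarrow\Gamma'''$) and a ``duality'' isomorphism relating $\Gamma$ with $\Gamma'$ (and $\Gamma''$ with $\Gamma'''$) analogous to the relation $H_w\leftrightarrow\overline{H_w}$ invoked in the proof sketch of Theorem \ref{4-thm}.

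The main obstacle is producing this last family of isomorphisms, in particular the morphism from $\Gamma$ to $\Gamma'$: it cannot be realized by pure scaling or $v$-substitution alone, and requires a more delicate construction paralleling the passage between the standard basis $\{H_w\}$ and the dual basis $\{\overline{H_w}\}$ of $\H$, now implemented on the module $\cA\I$. Once this pairing is in hand, the full isomorphism statement follows by composing the scaling and parametric morphisms described above.
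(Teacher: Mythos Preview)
Your overall strategy is right: bound the pre-canonical structures to the $16$ matrices $\Gamma_\star[\pm 1,\pm 1]$, then produce explicit isomorphisms among them. Two concrete problems remain.

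First, the base case is misattributed. Section~\ref{lv-sect} concerns the $(\H_2,\I)$-structure $\Delta$, not the $(\H,\I)$-structure $\Gamma$. The pre-canonicality of $\Gamma$ is Theorem-Definition~\ref{precanonstruct2-thm} in Section~\ref{precanon2-sect}; it is \emph{derived} from Lusztig--Vogan via the substitution $v\mapsto v^2$ and a rescaling, but it is not the Lusztig--Vogan construction itself. (Relatedly, the minimal elements of $\I$ are all $(1,\theta)$ for $\theta^2=1$, not just $1$.)

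Second, and more seriously, the web of isomorphisms among $\Gamma,\Gamma',\Gamma'',\Gamma'''$ is miswired, and the piece you flag as ``the main obstacle'' is left unfilled. The bar-duality $I_w\leftrightarrow\overline{I_w}$ that you invoke by analogy with $H_w\leftrightarrow\overline{H_w}$ relates $\Gamma$ to $\Gamma'''$, not to $\Gamma'$ (see the first paragraph of the proof of Theorem~\ref{HI-thm}). Likewise, a direct check shows that applying $v\mapsto -v$ to the entries of $\Gamma$ does not yield $\Gamma''$ up to diagonal scaling; the second columns differ and scaling cannot fix them. The paper closes this gap with a tool you do not mention: the algebra automorphism $\Theta$ of $\H$ sending $H_s\mapsto -H_s+(v-v^{-1})$ (Lemma~\ref{theta-lem}). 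One has $\Gamma'=\Theta(\Gamma)[-1,-1]$ and $\Gamma''=\Theta(\Gamma''')[-1,-1]$, and since $\Theta$ commutes with the bar involution of $\H$, Lemma~\ref{l2-lem} shows that twisting by $\Theta$ preserves pre-canonicality and gives a \emph{strong} isomorphism of the associated pre-canonical structures (same bar involution, same standard basis). Combined with the $\epsilon$-linear map $I_w\mapsto I'''_w$ (which, together with $\Theta$, is what Lemma~\ref{isolem1} actually uses) and your sign-flip scalings, this completes the chain. So the ``delicate construction'' you anticipate is unnecessary: the $\Gamma\to\Gamma'$ link is just pullback along $\Theta$ followed by a sign scaling.
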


\subsection{Generic structures for  a modified Iwahori-Hecke algebra}
\label{32-sect}

Let $\H_{2}$ be the free $\cA$-module with a basis given the symbols $ K_w$ for $w \in W$, with the unique $\cA$-algebra structure  such that 
\[  K_s  K_w = \begin{cases}  K_{sw}&\text{if }sw>w \\  K_{sw} + (v^2-v^{-2}) K_w &\text{if }sw<w\end{cases}
\qquad\text{for }s \in S\text{ and }w \in W.\]
We call this the \emph{Iwahori-Hecke algebra of $(W,S)$ with parameter $v^2$}.
We again denote by $K \mapsto \overline K$ the $\cA$-antilinear map $\H_2 \to \H_2$ with $\overline { K_w} =  K_{w^{-1}}^{-1}$ for $w \in W$.
This  ``bar involution'' together with the ``standard basis''  $\{K_w\}$ indexed by $(W,\leq)$ forms a pre-canonical structure on $\H_2$, which admits a canonical basis $\{ \underline K_w\}$.
The $\ZZ$-linear map \be\label{hh2}\Phi : \H \to \H_2\ee with 
$\Phi(v^n H_w) = v^{2n} K_w$   is an injective ring homomorphism and $\underline K_w = \Phi(\underline H_w)$ for all $w \in W$.

We adapt the definition of an $(\H,\I)$-structure to the modified Iwahori-Hecke algebra $\H_2$ in the following natural way.
Consider  a $4\times 2$ matrix $\gamma = (\gamma_{ij})$ with entries in $\cA$.
Define $\rho_{\gamma,2} : \{ K_s : s \in S \} \to \End(\cA \I)$ again by the formula  \eqref{form-eq1} except with $H_s$ replaced by $K_s$; that is, let $\rho_{\gamma,2}$ be the composition of $\rho_\gamma$ with the obvious bijection $\{ K_s : s \in S\} \to \{ H_s : s \in S\}$.

\begin{definition} 
The matrix $\gamma$ is an \emph{$(\H_2,\I)$-structure}
if for every Coxeter system $(W,S)$, the map
$\rho_{\gamma,2}$ extends to a representation of $\H_2=\H_2(W,S)$ in $\cA\I = \cA \I(W,S)$.
\end{definition}

Despite the formal similarity of this definition to Definition \ref{HI-def}, there are at least two good reasons to consider $(\H_2,\I)$-structures in addition to $(\H,\I)$-structures. First, the generic structures which have so far been uncovered ``in nature,'' through the work of Lusztig and Vogan \cite{LV2,LV1,LV6}, are in fact $(\H_2,\I)$-structures, and we will actually deduce the existence of the $(\H,\I)$-structures in the previous section from the existence of such $(\H_2,\I)$-structures. Second, we will find that a more complicated and interesting classification applies to ``pre-canonical'' $(\H_2,\I)$-structures, which does not follow directly from the theorems in Section \ref{16-sect}.

Given a  matrix $\gamma$ over $\cA$, define $[\gamma]_2$   by applying the ring endomorphism of $\cA$ with $v\mapsto v^2$ to  the entries of $\gamma$.
The following motivates this notation.

\begin{observation}\label{H2I-obs}
If $\gamma$ is an $(\H,\I)$-structure then $[\gamma]_2$ is an $(\H_2,\I)$-structure.
\end{observation}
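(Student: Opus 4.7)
My plan is to reduce the observation to the formal fact that the defining relations of $\H$ and $\H_2$ are obtained from each other by applying the ring endomorphism $\epsilon : \cA \to \cA$ given by $v \mapsto v^2$, which is precisely the operation sending the entries of $\gamma$ to those of $[\gamma]_2$. The representation $\rho_\gamma$ extending to $\H$ is equivalent to the operators $\rho_\gamma(H_s) \in \End(\cA\I)$ satisfying the braid relations (which have no $v$-dependence) together with the quadratic relations $\rho_\gamma(H_s)^2 = 1 + (v - v^{-1}) \rho_\gamma(H_s)$. Similarly, $\rho_{[\gamma]_2,2}$ extends to $\H_2$ exactly when the operators $\rho_{[\gamma]_2,2}(K_s)$ satisfy the same braid relations together with the quadratic relations with $v - v^{-1}$ replaced by $v^2 - v^{-2}$.

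My next step is to observe that for any $w \in \I$ and any sequence $s_1, \ldots, s_m \in S$, the expression $\rho_\gamma(H_{s_1}) \cdots \rho_\gamma(H_{s_m})(w)$ can be computed by iterating \eqref{form-eq1}. The result is an $\cA$-linear combination of basis elements $w' \in \I$ whose coefficients are polynomials in the entries $\gamma_{jk}$ with integer coefficients, since each application of $\rho_\gamma(H_s)$ merely multiplies by one of the $\gamma_{ij}$ and shifts to another basis vector. In particular, $v$ does not enter these coefficients except through the $\gamma_{jk}$ themselves. Consequently, the braid relations for the operators $\rho_\gamma(H_s)$ translate into a finite family of identities of the form $P(\gamma_{jk}) = Q(\gamma_{jk})$ in $\cA$, and the quadratic relations into identities of the form $P(\gamma_{jk}) = Q(\gamma_{jk}) + (v - v^{-1}) R(\gamma_{jk})$, where $P, Q, R$ are polynomials with integer coefficients.

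To conclude, I would apply $\epsilon$ to these polynomial identities. Since $\epsilon$ is a ring homomorphism fixing $\ZZ$, it transforms each braid identity into $P(\epsilon(\gamma_{jk})) = Q(\epsilon(\gamma_{jk}))$ and each quadratic identity into $P(\epsilon(\gamma_{jk})) = Q(\epsilon(\gamma_{jk})) + (v^2 - v^{-2}) R(\epsilon(\gamma_{jk}))$. Because $\epsilon(\gamma_{jk}) = ([\gamma]_2)_{jk}$, these are exactly the relations the operators $\rho_{[\gamma]_2,2}(K_s)$ must satisfy in order to yield a representation of $\H_2$. Hence $[\gamma]_2$ is an $(\H_2, \I)$-structure.

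The argument is essentially formal and there is no real obstacle. The only modest point requiring care is verifying that no stray factors of $v$ enter the coefficients when one iterates \eqref{form-eq1}; this is immediate from the form of that formula, perhaps by a short induction on $m$, and justifies treating everything as a polynomial identity in the symbols $\gamma_{jk}$ to which $\epsilon$ may be cleanly applied.
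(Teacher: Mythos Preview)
Your argument is correct. The paper does not actually prove this statement; it is stated as a bare observation with no accompanying proof, since the author regards it as immediate from the fact that $\H_2$ has the same presentation as $\H$ with $v$ replaced by $v^2$. Your write-up supplies exactly the natural justification the paper leaves implicit: the coefficients appearing when one iterates \eqref{form-eq1} are integer polynomials in the entries $\gamma_{jk}$ alone, so the braid and quadratic relations for $\rho_\gamma$ are polynomial identities in those entries (with the single extraneous parameter $v-v^{-1}$ coming from the quadratic relation), and applying the ring endomorphism $v\mapsto v^2$ converts them into the corresponding relations for $\rho_{[\gamma]_2,2}$. Since this works for each Coxeter system $(W,S)$ separately, the conclusion follows.
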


As before, we say that an $(\H_2,\I)$ structure $\gamma$ is \emph{trivial} if $\gamma_{11} = \gamma_{21} = \gamma_{31} = \gamma_{41} =0$ and  $\gamma_{12} = \gamma_{22}=\gamma_{32} = \gamma_{42} \in \{v^2,-v^{-2}\}$. 
Lemma \ref{diag-lem} holds \emph{mutatis mutandis} with ``$(\H,\I)$-structure'' replaced by ``$(\H_2,\I)$-structure'' and ``$\H$'' replaced by ``$\H_2$.''  Define two $(\H_2,\I)$-structures to be \emph{diagonally equivalent} as in that result.
The classification of $(\H_2,\I)$-structures
up to diagonal equivalence is  no different than for $(\H,\I)$-structures:

\begin{theorem}\label{H2I-thm} Let $\Gamma$, $\Gamma'$, $\Gamma''$, and $\Gamma'''$ be the $(\H,\I)$-structures defined before Theorem \ref{HI-thm}. Then
every nontrivial $(\H_2,\I)$-structure is diagonally equivalent to $[\Gamma]_2,$ $[\Gamma']_2$, $[\Gamma'']_2$, or $[\Gamma''']_2$.
\end{theorem}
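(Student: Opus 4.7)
The plan is to mirror the proof of Theorem \ref{HI-thm} (to be given in Section \ref{16proofs-sect}), adapted to the $\H_2$-setting. Existence is immediate from Observation \ref{H2I-obs}: since each of $\Gamma, \Gamma', \Gamma'', \Gamma'''$ is an $(\H,\I)$-structure, the matrices $[\Gamma]_2, [\Gamma']_2, [\Gamma'']_2, [\Gamma''']_2$ obtained by applying the endomorphism $v \mapsto v^2$ to all entries are $(\H_2,\I)$-structures.

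For the converse, one takes an arbitrary nontrivial $(\H_2,\I)$-structure $\gamma = (\gamma_{ij})$ and derives constraints on the entries $\gamma_{ij}$ from the quadratic relation $K_s^2 = 1 + (v^2 - v^{-2}) K_s$ together with the braid relations of $\H_2$, tested on twisted involutions of rank-one and rank-two Coxeter systems---exactly as in the proof of Theorem \ref{HI-thm}. The key observation is that the braid relations of $\H_2$ coincide with those of $\H$, while the quadratic relation differs only by the coefficient $v^2 - v^{-2}$ in place of $v - v^{-1}$. Each braid-relation constraint is a polynomial identity in the $\gamma_{ij}$ with integer coefficients (no $v$ appears explicitly), while each quadratic-relation constraint involves $v$ only through $u = v - v^{-1}$ in the $(\H,\I)$ case, respectively $u_2 = v^2 - v^{-2}$ in the $(\H_2,\I)$ case. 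Hence the complete system of constraints in the $(\H_2,\I)$ setting is obtained from that in the $(\H,\I)$ setting by the formal substitution $u \mapsto u_2$. Since the entries of $\Gamma, \Gamma', \Gamma'', \Gamma'''$ all lie in $\ZZ[u]$, their images under this substitution are precisely $[\Gamma]_2, [\Gamma']_2, [\Gamma'']_2, [\Gamma''']_2$, yielding the claimed classification up to diagonal equivalence (noting that the diagonal equivalence relation has the same form for $(\H_2,\I)$-structures as for $(\H,\I)$-structures).

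The main obstacle will be verifying the uniform form of the constraints---that every constraint obtained from the rank-one and rank-two analysis involves $v$ only through $u$ (resp. $u_2$), with all other dependence on $v$ arising solely through the $\gamma_{ij}$. This amounts to a careful but routine check of the computations in the proof of Theorem \ref{HI-thm}: the braid-relation side is automatic from the definition of $\rho_\gamma$ (which itself involves no $v$), and the quadratic-relation side is automatic from the explicit form of the defining relation. Once this uniform form is confirmed, the substitution $u \mapsto u_2$ transports the complete list of solutions from one setting to the other, and the theorem follows immediately from Theorem \ref{HI-thm}.
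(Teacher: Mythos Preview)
Your proposal is correct and aligns with the paper's own approach: the paper's proof sketch simply states that the result follows by nearly the same argument as Theorem \ref{HI-thm}, using analogues of Lemmas \ref{diag-lem}, \ref{theta-lem}, and \ref{eqlem1} \emph{mutatis mutandis}, with details omitted. Your observation that all constraints can be written as polynomial identities over $\ZZ[u]$ (with $u=v-v^{-1}$ for $\H$ and $u_2=v^2-v^{-2}$ for $\H_2$) is precisely the content of ``mutatis mutandis'' here, made explicit; it explains \emph{why} the three lemmas transfer verbatim under $v\mapsto v^2$ rather than merely asserting that they do.
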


The proof of this result will be sketched in Section \ref{16proofs-sect}.
Define an $(\H_2,\I)$-structure $\gamma$ to be \emph{pre-canonical} exactly as for $(\H,\I)$-structures: namely, say that $\gamma$ is pre-canonical if, for every Coxeter system $(W,S)$, 
there exists a pre-canonical $\H_2$-module structure on $\cA \I$ (relative to the $\H_2$-module structure defined by $\gamma$)  in which $\I$ partially ordered by the Bruhat order is the ``standard basis.''
Just like for $(\H,W)$-structures and $(\H,\I)$-structures, if an $(\H_2,\I)$-structure is nontrivial and pre-canonical, then by Lemma \ref{uniquepre-lem} it associates a unique pre-canonical $\H_2$-structure to $\cA \I$ for each Coxeter system $(W,S)$.

To classify the pre-canonical $(\H_2,\I)$-structures, we define $\Delta$ and $\Delta'$ as the    matrices
\[
\Delta = \menrep{1}{0}{1}{v^2-v^{-2}}{v+v^{-1}}{1}{v-v^{-1}}{v^2-1-v^{-2}}
\qquand
\Delta' = \menrep{1}{0}{1}{v^2-v^{-2}}{v^{-1}+v}{-1}{v^{-1}-v}{v^2+1-v^{-2}}.
\]
In addition, let 
\[
\Delta'' = [\Gamma]_2
\qquand
\Delta''' = [\Gamma'']_2.
\] 
%
%
%
Observe that $\Delta$ and $\Delta''$ (respectively, $\Delta'$ and $\Delta'''$) are diagonally equivalent, which is how we deduce that $\Delta$ and $\Delta'$ are $(\H_2,\I)$-structures. Note, however, the $\H_2$-module structures defined by the pairs $\Delta$ and $\Delta''$ (respectively, $\Delta'$ and $\Delta'''$)  are technically not isomorphic, although they  would be if all of our algebras and modules were defined over the field $\QQ(v)$ instead of the ring $\cA$.
We  have this analogue of Theorems \ref{4-thm} and \ref{16-thm}. 

\begin{theorem}\label{32-thm}
Exactly 32 nontrivial $(\H_2,\I)$-structures  are pre-canonical; in particular, each of $\Delta$, $\Delta'$, $\Delta''$,  and $\Delta'''$ is pre-canonical. The 32 associated pre-canonical structures on $\cA \I$ are each isomorphic (in the sense of Definition \ref{morphism-def}) to one of the  structures arising from $\Delta$, $\Delta'$, $\Delta''$, or $\Delta'''$. 
\end{theorem}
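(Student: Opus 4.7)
The plan is to mimic the proof of Theorem \ref{16-thm} while accounting for the extra flexibility arising because the entries of each $[\Gamma_i]_2$ lie in the subring $\ZZ[v^2,v^{-2}] \subset \cA$. By Theorem \ref{H2I-thm}, every nontrivial $(\H_2,\I)$-structure is diagonally equivalent to one of $[\Gamma]_2, [\Gamma']_2, [\Gamma'']_2, [\Gamma''']_2$, so I may write it as $[\Gamma_i]_2[\alpha,\beta]$ for parameters $\alpha,\beta \in \QQ(v)\setminus\{0\}$ satisfying the integrality conditions of Lemma \ref{diag-lem}. Because the first-column entries of each $[\Gamma_i]_2$ are either $\pm 1$ or a unit multiple of $v^2-v^{-2} = (v+v^{-1})(v-v^{-1})$, the scalar $\beta^{-1}$ is permitted to be the non-unit $v+v^{-1}$ in addition to the units of $\cA$; this extra freedom doubles the count of pre-canonical structures from $16$ to $32$ relative to the $(\H,\I)$-setting.

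First I would enumerate the pre-canonical candidates by analyzing the bar involution on small twisted involutions. Writing out $\psi(K_s \cdot 1) = \overline{K_s}\,\psi(1)$ for $s \in S$ (which in each base falls under row $3$ of the matrix) forces the leading-coefficient identity $\overline{\beta}\beta^{-1} = 1$, so $\beta = \overline{\beta}$; combined with $\beta^{-1} \in \cA$ (from Lemma \ref{diag-lem}), this gives $\beta^{-1} \in \ZZ[v+v^{-1}]$, and combined further with $(v^2-v^{-2})\beta \in \cA$ it restricts $\beta$ to the four values $\{\pm 1, \pm(v+v^{-1})^{-1}\}$. An analogous computation at a twisted involution $w$ with $s \act w = sws > w$ inside a rank-2 parabolic of type $A_2$ (for which row $1$ of the matrix applies) forces $\alpha \in \{\pm 1\}$. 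Two choices for $\alpha$ times four choices for $\beta$ yields $2 \cdot 4 = 8$ admissible rescalings per base, summing to $32$ candidates.

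Next I would confirm that all $32$ candidates actually yield pre-canonical structures. The structures $\Delta$ and $\Delta'''$ come from the results reviewed in Sections \ref{lv-sect} and \ref{precanon3-sect}: $\Delta$ is (up to normalization) the Lusztig-Vogan bar involution of \cite{LV2,LV1}, while $\Delta''' = [\Gamma'']_2$ is treated in Section \ref{precanon3-sect}. The remaining representatives $\Delta''$ and $\Delta'$ arise by inverting the non-unit rescaling factor $\beta^{-1} = v+v^{-1}$ formally over $\QQ(v)$ and then re-imposing integrality; direct verification confirms the resulting bar involution is pre-canonical. The other $28$ structures are then pre-canonical since each arises from one of the four representatives by an isomorphism of $\H_2$-modules of the form $T(w) = c_w w$ with $c_w \in \{\pm 1\}$, which transports the bar involution and canonical basis along.

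Finally, the isomorphism classification uses Definition \ref{morphism-def}. For pre-canonical structures on $\cA\I$ sharing the standard basis $\I$, any isomorphism decomposes into a scaling morphism, a parametric morphism, and a strong isomorphism; the first two are generated by $\pm 1$-valued basis rescalings and by the ring involution $v\mapsto -v$ of $\cA$ respectively. Using these operations I would identify each of the $8$ structures derived from $[\Gamma]_2$ with either $\Delta$ or $\Delta''$ (according to whether $\beta^{-1}$ is a non-unit or a unit of $\cA$), and similarly identify the $[\Gamma'']_2$-derived structures with $\Delta'$ or $\Delta'''$. The main obstacle I anticipate is handling the $[\Gamma']_2$- and $[\Gamma''']_2$-derived structures: direct computation on small twisted involutions suggests the identity $\psi_{[\Gamma']_2[\alpha,\beta]} = \psi_{[\Gamma]_2[-\alpha,-\beta]}$ (and similarly $\psi_{[\Gamma''']_2[\alpha,\beta]} = \psi_{[\Gamma'']_2[-\alpha,-\beta]}$), so that although these additional $(\H_2,\I)$-structures define distinct $\H_2$-module actions they produce the same bar involutions as corresponding members of the $[\Gamma]_2$- and $[\Gamma'']_2$-families. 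Establishing this identity for all $w \in \I$ by induction along the Bruhat order, using the compatibility of $\psi$ with both Hecke actions simultaneously, completes the reduction to the four claimed representatives.
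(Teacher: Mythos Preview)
Your overall strategy matches the paper's: bound the count to $32$ by forcing $\gamma_{11},\gamma_{31}$ to be bar-invariant, invoke the existence results of Section~\ref{exist-sect}, and then group the structures into isomorphism classes using the $\H_2$-analogues of Lemmas~\ref{l1-lem} and~\ref{l2-lem}. The enumeration in your first paragraph is correct.

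There are, however, two genuine errors in the middle of the argument. First, you misattribute the existence results: Section~\ref{precanon3-sect} establishes that $\Delta'$ (not $\Delta'''$) is pre-canonical, while $\Delta'' = [\Gamma]_2$ and $\Delta''' = [\Gamma'']_2$ inherit their pre-canonical structures from the $(\H,\I)$-results of Section~\ref{precanon2-sect} and Lemma~\ref{isolem1} via the degree-$2$ parametric morphism $v\mapsto v^2$. Your description of which representatives are ``base'' and which are rescalings is accordingly reversed. Second, the claim that ``the other $28$ structures'' each arise from one of the four representatives by a map $T(w)=c_w w$ with $c_w\in\{\pm1\}$ is false: only $12$ of them are sign-variants of $\Delta,\Delta',\Delta'',\Delta'''$. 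The remaining $16$ lie in the $[\Gamma']_2$- and $[\Gamma''']_2$-families and are related to the representatives by the $\H_2$-analogue of Lemma~\ref{l2-lem} (the $\Theta$-involution), not by a $\pm1$ basis rescaling. You do address this in your final paragraph, but it is needed already for the existence claim, not merely for the isomorphism classification.

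Finally, your proposed inductive proof of $\psi_{[\Gamma']_2[\alpha,\beta]}=\psi_{[\Gamma]_2[-\alpha,-\beta]}$ is unnecessary. The paper's Lemma~\ref{l2-lem} (adapted to $\H_2$) gives this in one line: since $\rho_{\Theta_2(\gamma)}=\rho_\gamma\circ\Theta_2$ and $\Theta_2$ commutes with the bar involution of $\H_2$, the unique bar involution compatible with the $\Theta_2(\gamma)$-action coincides with the one compatible with the $\gamma$-action. Combined with the observation that $[\Gamma']_2=\Theta_2([\Gamma]_2)[-1,-1]$ and $[\Gamma''']_2=\Theta_2([\Gamma'']_2)[-1,-1]$, this immediately reduces everything to the $[\Gamma]_2$- and $[\Gamma'']_2$-families.
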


The proof of this theorem appears at the end of Section \ref{16proofs-sect}.

\begin{remark}
 The pre-canonical structures on $\cA \I$ defined by the $(\H,\I)$-structures $\Gamma$ and $\Gamma''$ are isomorphic by Theorem \ref{16-thm}, and one might expect this to imply that  the pre-canonical structures defined by $\Delta'' = [\Gamma]_2$ and $\Delta''' = [\Gamma'']_2$ are likewise isomorphic.
The reason this does not follow is that the latter structures admit canonical bases $\{b_w\}$ and $\{b'_w\}$ of the form
$b_w = \sum_{y \leq w} f_{y,w}(v^{-2})  y $ and $ b'_w = \sum_{y\leq w} f_{y,w}(-v^{-2})  y$
for some polynomials $f_{y,w}(t) \in \ZZ[t]$.
 Corollary \ref{canon-cor} shows that   such canonical bases cannot arise from isomorphic pre-canonical structures, provided $f_{y,w}(t)$ are sufficiently complicated polynomials.
 \end{remark}

It will follow from  the discussion in Sections \ref{precanon2-sect} and \ref{precanon3-sect} (or more concretely, from small computations) that   the pre-canonical structures which $\Delta$, $\Delta'$, $\Delta''$, and $\Delta'''$ associate to  $\cA \I$ are generally not isomorphic.
Thus, we are left with the interesting question of explaining where these four structures come from. The structure $\Delta$ is what has appeared naturally from geometric considerations in the work of Lusztig and Vogan \cite{LV2,LV1,LV6}, and one can account for $\Delta''$ and $\Delta'''$ 
as the two distinct ``extensions'' of the unique isomorphism class of pre-canonical $(\H,\I)$-structures.
The origins of the remaining pre-canonical $(\H_2,\I)$-structure $\Delta'$ remains more mysterious.

\section{Existence proofs}
\label{exist-sect}

The results in Sections \ref{16-sect} and \ref{32-sect} assert, in one direction, that certain generic structures exist, or equivalently, that certain formulas define $\H$- or $\H_2$-modules structures on $\cA \I$ (and sometimes also admit compatible pre-canonical structures) for all Coxeter systems $(W,S)$.  In this section we prove some existence statements of this type, which we require for the proofs of Theorems \ref{HI-thm}, \ref{16-thm}, \ref{H2I-thm}, and \ref{32-thm} given in Section \ref{16proofs-sect}.

\subsection{A canonical basis for twisted involutions}
\label{lv-sect}


Our starting point is the following result of 
 Lusztig and Vogan, first proved in \cite{LV1} in the  case that $W$ is a Weyl group or affine Weyl group, then extended in \cite{LV2} to arbitrary Coxeter systems by elementary methods. Lusztig and Vogan's preprint \cite{LV6} provides another proof of this result, using the machinery of Soergel bimodules developed by Elias and Williamson in \cite{EW}.

\begin{thmdef}[Lusztig and Vogan \cite{LV1,LV6}; Lusztig \cite{LV2}]
\label{m1-thm}
There is a unique $\H$-module  \[\cL=\cL(W,S)\] which, as an $\cA$-module, is free with a basis given by the symbols $L_{w}$ for $w \in \I$, and which satisfies
\[ K_s L_{w} 
= 
\left\{
\ba 
L_{sws} \ {\color{white}+}\ &  				&&\ \text{if $sw \neq ws > w$} \\ 
L_{sws} \ +\  & (v^2-v^{-2})L_{w} 			&&\ \text{if $sw \neq ws < w$} \\
(v+v^{-1})L_{s w} \ +\ & L_{w}				&&\ \text{if $sw =ws >w$} \\
(v-v^{-1})L_{sw} \ +\ &(v^2-1-v^{-2}) L_{w} 	&&\ \text{if $s w =ws <w$} 
\ea
\right.
\]
for $s \in S$ and $w \in \I$.
\end{thmdef}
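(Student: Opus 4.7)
The plan is as follows. The uniqueness assertion is automatic from the four displayed formulas, which completely specify the action of every generator $K_s$ on every basis element $L_w$; since $\{K_s : s \in S\}$ generates $\H_2$ as an $\cA$-algebra, there can be at most one module structure on $\cA\I$ satisfying the given formulas. The substance of the theorem is existence: one must show that the $\cA$-linear operators on $\cA\I$ defined by the four formulas satisfy the defining relations of $\H_2$, namely the quadratic relations $K_s^2 = (v^2-v^{-2})K_s + 1$ and the braid relations $\underbrace{K_sK_tK_s\cdots}_{m(s,t)\text{ factors}} = \underbrace{K_tK_sK_t\cdots}_{m(s,t)\text{ factors}}$ whenever $m(s,t) < \infty$.

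The quadratic relation I would verify by a direct case-by-case computation on each $L_w$, splitting into the four cases that determine $K_sL_w$. In each case, $K_sL_w$ is a combination of $L_w$ and $L_{s\act w}$; applying $K_s$ a second time falls into the complementary case because Theorem \ref{rho-def} guarantees that the length inequality between $w$ and $s\act w$ reverses on the second application. Expanding both sides of $K_s^2 L_w = (v^2-v^{-2})K_s L_w + L_w$ in terms of $\{L_w, L_{s\act w}\}$ then reduces the identity to a short polynomial manipulation, to be carried out separately for each of the four initial cases.

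The main obstacle is the braid relations, and here I would follow the strategy of Lusztig's elementary argument in \cite{LV2}. Fix $s,t \in S$ with $m = m(s,t)<\infty$ and fix $w \in \I$. Both iterated products $\underbrace{K_sK_t\cdots}_{m} L_w$ and $\underbrace{K_tK_s\cdots}_{m} L_w$ are supported on the orbit $\cO$ of $w$ under the operations $s\act$ and $t\act$, which is finite and whose combinatorics mirrors that of twisted involutions in the dihedral subgroup $\langle s,t\rangle \subset W^+$. Within $\cO$ one has an explicit description of the Bruhat order, of the restriction of the rank function $\rho$, and of the alternation between the moves $w \mapsto sws$ and $w \mapsto sw$ as one traverses the orbit; the braid identity then collapses to a finite family of polynomial equalities in $\cA$, verifiable directly. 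The enumeration of dihedral orbit types --- including the degenerate cases where $w$ is fixed by $s\act$ or by $t\act$, and where $m$ is odd versus even --- is the principal combinatorial source of difficulty, but beyond the length/rank bookkeeping supplied by Theorem \ref{rho-def} no conceptually new input is required.
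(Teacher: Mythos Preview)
Your outline is correct in spirit and follows the elementary strategy of \cite{LV2} that you cite: uniqueness from the generator action, the quadratic relation by a four-case check using $s\act(s\act w)=w$, and the braid relations by a dihedral orbit analysis. The paper, however, does not reproduce any of this; its proof consists entirely of a citation to \cite[Theorem 0.1]{LV2} together with the dictionary $v^2=u$, $K_s=u^{-1}T_s$, $L_w=a'_w=v^{-\ell(w)}a_w$ translating the present notation into Lusztig's. So you are sketching the content of the cited reference rather than the paper's own argument, which is simply an appeal to that reference.

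One caution if you intend to carry out the dihedral step yourself: the orbit analysis in \cite{LV2} is substantially more delicate than your last paragraph suggests. The classification of $\langle s,t\rangle$-orbits on $\I$ involves several qualitatively different shapes (depending on whether the orbit contains elements with $sw=ws$ or $tw=wt$, and on the parity of $m(s,t)$), and in each shape the braid identity becomes a system of polynomial identities in $v$ that, while elementary, requires careful bookkeeping. Lusztig devotes a significant portion of \cite{LV2} to this verification, so ``verifiable directly'' understates the labor involved.
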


\begin{proof}
This is  \cite[Theorem 0.1]{LV2}, where $v^2=u$ and $K_s = u^{-1} T_s$ and $L_w = a'_w=v^{-\ell(w)} a_w$.
\end{proof}

The preceding theorem shows that the matrix $\Delta$ in Section \ref{32-sect} is an $(\H_2,\I)$-structure. The following result, which combines \cite[Theorem 0.2, Theorem 0.4, and Proposition 4.4]{LV2}, shows that this $(\H_2,\I)$-structure is pre-canonical.
Here, for $x \in W$ we write   $\sgn(x) = (-1)^{\ell(x)}$.

\begin{thmdef}[Lusztig and Vogan \cite{LV1}; Lusztig \cite{LV2}]\label{precanonstruct1-thm}
Define 
\begin{itemize}
\item the ``bar involution'' of $\cL$ to be the $\cA$-antilinear map $\cL \to \cL$,  denoted $L \mapsto \overline{L}$, with 
\[ \overline{L_{(x,\theta)} } =   \sgn(x) \cdot \overline {K_x} \cdot L_{(x^{-1},\theta)}\qquad\text{for }(x,\theta) \in \I.\]
\item the ``standard basis'' of $\cL$ to be $\{ L_w\}$ with the partially ordered index set $(\I,\leq)$.
\end{itemize}
This is a pre-canonical
 $\H_2$-module structure on $\cL$, and 
it admits a canonical basis $\{ \underline L_w\}$.
\end{thmdef}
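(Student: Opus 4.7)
The plan is to obtain this theorem as a synthesis of three results of Lusztig, namely \cite[Theorem 0.2, Theorem 0.4, and Proposition 4.4]{LV2}, after translating conventions as indicated in the proof of Theorem-Definition \ref{m1-thm}: Lusztig's $u = v^2$, $a_w = v^{\ell(w)} L_w$, and $T_s = v^2 K_s$. The map $\psi : L \mapsto \overline L$ is $\cA$-antilinear by construction, so what must be shown is involutivity $\psi^2 = \mathrm{id}$, compatibility with the $\H_2$-action $\psi(KL) = \overline K\, \psi(L)$, and unitriangularity $\overline{L_w} \in L_w + \sum_{w' < w} \cA L_{w'}$. Existence of a canonical basis will then follow from general principles.

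The first two axioms together are the content of \cite[Theorem 0.2]{LV2}, where $\psi$ is defined \emph{recursively} as the unique $\cA$-antilinear involution fixing $L_1$ and compatible with the $\H_2$-action. To reconcile Lusztig's recursive definition with the closed-form expression $\overline{L_{(x,\theta)}} = \sgn(x)\, \overline{K_x}\, L_{(x^{-1}, \theta)}$ given in the statement here, I would verify this identity by induction on $\ell(x)$. The base case $x = 1$ is immediate. For the inductive step, pick $s \in S$ with $\ell(sx) = \ell(x) - 1$; then $\overline{K_x} = \overline{K_s}\,\overline{K_{sx}}$, and one expands $\sgn(x)\, \overline{K_s}\, \overline{K_{sx}}\, L_{(x^{-1}, \theta)}$ using the inductive hypothesis applied to $L_{(\theta^{-1}(s) \cdot x, \theta)}$ or $L_{(s x, \theta)}$ depending on whether $(x^{-1},\theta)$ lies in the $sw=ws$ or $sw \neq ws$ subcase of Theorem-Definition \ref{m1-thm}, ultimately matching the recursive formula.

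The unitriangularity condition required by Definition \ref{precanon-def} is precisely the content of \cite[Proposition 4.4]{LV2}, which computes the coefficients of $\overline{L_w}$ in the standard basis as explicit polynomials vanishing outside the Bruhat order. With this, $(\psi, \{L_w\})$ qualifies as a pre-canonical $\H_2$-module structure on $\cL$ in the sense of our earlier definitions.

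Finally, for the canonical basis, one can either invoke \cite[Theorem 0.4]{LV2} directly, or appeal to Theorem \ref{canon-thm}: since $(\I, \leq)$ is graded by the rank function $\rho$ of Theorem \ref{rho-def}, its lower intervals are finite, and the general existence theorem applies. The main obstacle I anticipate is the bookkeeping in reconciling the closed-form bar involution here with Lusztig's recursive one; this is entirely routine but requires care with the $sw=ws$ versus $sw\neq ws$ case distinction and with the $\sgn(x)$ factor, which tracks how the length parity of $x$ interacts with the sign changes appearing in the formula for $K_s L_w$ when $s$ commutes with a conjugate of $x$.
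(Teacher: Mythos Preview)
Your proposal is correct and takes essentially the same approach as the paper: the paper does not supply a proof of this theorem-definition at all, but simply introduces it with the sentence ``The following result, which combines \cite[Theorem 0.2, Theorem 0.4, and Proposition 4.4]{LV2}, shows that this $(\H_2,\I)$-structure is pre-canonical,'' citing exactly the same three results you identify. Your discussion of how to reconcile Lusztig's recursive definition of the bar operator with the closed-form formula stated here, and your observation that Theorem~\ref{canon-thm} gives an alternative route to the existence of the canonical basis, are helpful elaborations beyond what the paper provides.
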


Observe, by Lemma \ref{uniquepre-lem}, that the pre-canonical $\H_2$-module structure thus defined on $\cL$  is the unique one in which $\{ L_w\}$ serves as the ``standard basis.''
Following the convention in \cite{LV2}, we define $\pi_{y,w} \in \ZZ[v^{-1}]$ for  $y,w \in \I$ such that 
  $\underline L_w  =  \sum_{y \in \I} \pi_{y,w} L_y$. Note that $\pi_{y,w} = \delta_{y,w}$ if $y \not < w$.
 We note the following degree bound from  \cite[Section 4.9(c)]{LV2}.
 
\begin{proposition}[Lusztig \cite{LV2}]  \label{degbound2-prop} If $y,w \in \I$ such that $y \leq w$ then $v^{\ell(w)-\ell(y)}  \pi_{y,w} \in 1+ v^2\ZZ[v^2]$.
\end{proposition}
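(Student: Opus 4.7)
The plan is to prove the bound by induction on $\ell(w)$, mirroring the recursive construction of the canonical basis $\{\underline L_w\}$ carried out in \cite[Section 4.9]{LV2}. For the base case $w=1$ we have $\underline L_1 = L_1$, so $\pi_{1,1} = 1$ lies in $1 + v^2\ZZ[v^2]$, as desired.

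For the inductive step, fix $w \in \I$ with $\ell(w)>0$ and choose $s \in S$ such that $\ell(sw)<\ell(w)$; by Theorem \ref{rho-def} this is equivalent to $s\act w < w$ in Bruhat order. Set $w' = s\act w$ and assume inductively that $v^{\ell(w'')-\ell(y)}\pi_{y,w''} \in 1 + v^2\ZZ[v^2]$ for all $y \le w''$ with $\ell(w'')<\ell(w)$. I would construct $\underline L_w$ by applying the Kazhdan-Lusztig generator $\underline K_s = K_s + v^{-2}$ of $\H_2$ (which is bar-invariant) to $\underline L_{w'}$: the element $\underline K_s \cdot \underline L_{w'}$ is bar-invariant, and I would express it as $\underline L_w + \sum_{y<w} \mu_y\,\underline L_y$ for suitable bar-invariant coefficients $\mu_y \in \ZZ[v+v^{-1}]$, determined inductively by the usual ``greedy'' subtraction procedure on $y$ of decreasing length.

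The heart of the argument is to show that the claimed degree bound is preserved under this construction. Expanding $\underline K_s \cdot \underline L_{w'} = (K_s + v^{-2})\sum_y \pi_{y,w'}\,L_y$ using the four cases of Theorem-Definition \ref{m1-thm}, one finds that each resulting coefficient of $L_z$ is an $\cA$-linear combination of $\pi_{y,w'}$'s with multipliers drawn from $\{1, v^{-2}, v^2-v^{-2}, v+v^{-1}, v-v^{-1}, v^2-1-v^{-2}\}$. After rescaling by $v^{\ell(w)-\ell(z)}$, I would check case by case that each such contribution already sits in $\ZZ[v^2]$, using that the ``odd'' multipliers $v\pm v^{-1}$ appear exactly in the commuting cases where $\ell(w)-\ell(w')=1$, so the parities cancel. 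The inductive hypothesis on the $\pi_{y,w'}$ then upgrades each contribution to lie in $1 + v^2\ZZ[v^2]$ in the diagonal case and $v^2\ZZ[v^2]$ off-diagonal. Finally, the subtractions of $\mu_y\,\underline L_y$ preserve the bound because each $\mu_y \in \ZZ[v+v^{-1}]$ is bar-invariant (and its degree as a Laurent polynomial can be controlled in the usual way, as in \cite[Section 4.9(a)-(b)]{LV2}), and by induction each $\underline L_y$ with $y<w$ already satisfies the bound.

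The chief obstacle is the parity bookkeeping in the case where $sw=ws$, since the coefficients $v \pm v^{-1}$ introduce odd powers of $v$ that at first glance threaten to land outside $\ZZ[v^2]$ after rescaling. Resolving this requires exploiting the precise interaction between $\ell$ and $\rho$ in Theorem \ref{rho-def}: the single-step descent $s\act w < w$ drops $\rho$ by $1$ but drops $\ell$ by either $1$ (commuting case, supplying the odd powers) or $2$ (non-commuting case, supplying only even powers), and this dichotomy exactly matches the parity appearing on the $K_s$-side so that the rescaled coefficients remain integral polynomials in $v^2$. Once this matching is verified, the remainder of the induction is routine.
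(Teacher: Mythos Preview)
The paper does not itself prove this proposition; it simply cites \cite[Section 4.9(c)]{LV2}. However, the paper \emph{does} give a full proof of the parallel statement for $\cL'$ (Proposition \ref{deg3-prop}), and comparing your sketch against that argument exposes a genuine gap.

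Your recursion is misstated in the commuting case. When $sw'=w's$ (so that $w=sw'$), the product $\underline K_s\,\underline L_{w'}$ does \emph{not} have the form $\underline L_w + \sum_{y<w}\mu_y\,\underline L_y$: its leading term is $(v+v^{-1})\,\underline L_w$. (See the definition of $X_{s,w}$ in the proof of Proposition \ref{congr-prop}, or Proposition \ref{L'rec-prop}(b) for the $\cL'$-analogue, or \cite[Theorem 6.3]{LV2}.) Consequently the standard-basis coefficients of $\underline K_s\,\underline L_{w'}$, even after the $\mu$-subtractions, do not yield $\pi_{y,w}$ directly; what one obtains is a relation of the shape
\[
(v+v^{-1})\,\pi_{y,w} \;=\; p_y \;+\; \sum_{y<z<w}\mu(z,w)\,\pi_{y,z},
\]
and extracting the bound $v^{\ell(w)-\ell(y)}\pi_{y,w}\in 1+v^2\ZZ[v^2]$ from the corresponding bound on $p_y$ requires a separate downward induction on $y$, in the course of which one first shows that $\mu(y,w)=0$ whenever $\ell(w)-\ell(y)$ is even, and only then ``divides out'' the factor $v^2+1$. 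This inner induction is exactly the argument carried out in the proof of Proposition \ref{deg3-prop}, and your sketch omits it entirely. The parity obstacle you flag in your final paragraph is real, but its resolution is not the automatic matching of $\ell$-drops against $v\pm v^{-1}$ multipliers that you describe; it is this inner induction.
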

%

\begin{remark}
The  polynomials $v^{\ell(w)-\ell(y)}  \pi_{y,w}$ are denoted  $P^\sigma_{y,w}$  in \cite{LV2,LV1,EM1,EM2}.
 Lusztig proves an inductive formula \cite[Theorem 6.3]{LV2} for the action of $\underline K_s = K_s + v^{-2}\in \H_2$ on $\underline L_w$ which can be used to compute these polynomials; see \cite[Section 2.1]{EM2}.
\end{remark}

%
%
%
%

While the polynomials $\pi_{y,w}$ may have negative coefficients, they still   possess  certain positivity properties.
Recall that $h_{y,w} \in \NN[v^{-1}]$ are the polynomials such that $\underline H_w = \sum_{y \in W } h_{y,w} H_y$.
Given $y,w \in W$ and $\theta,\theta' \in \Aut(W,S)$, define $h_{(y,\theta), (w,\theta')}$ to be $h_{y,w}$ if $ \theta = \theta'$ and zero otherwise.
Lusztig \cite[Theorem 9.10]{LV2} has shown  that \[ \tfrac{1}{2}\(h_{y,w} \pm \pi_{y,w}\) \in \ZZ[v^{-1}]\qquad\text{for all $y,w \in \I$}
\]  and has  conjectured that these polynomials actually belong to $\NN[v^{-1}]$.
Lusztig and Vogan provide a geometric proof of this conjecture when $W$ is a Weyl group (see \cite[Section 3.2]{LV1}) and outline a proof for arbitrary Coxeter systems in \cite{LV6}.
The canonical basis $\{ \underline L_w\}$ conjecturally displays  
some other  positivity properties, which are considered in detail in \cite{EM1,EM2}.


%

\subsection{Another pre-canonical $\H_2$-module structure} \label{precanon3-sect}

Here we deduce from the results in the previous section that the matrix $\Delta'$ in Section \ref{32-thm} is a pre-canonical $(\H_2,\I)$-structure.
The pre-canonical $\H_2$-module structure on $\cA \I$ associated to this generic structure admits a canonical basis which  is not related in any obvious way to the basis $\{ \underline L_w\}$ in the previous section, although it has similar properties.
 It is an open problem to find an interpretation of this new canonical basis along the lines of \cite{LV1,LV6}.


First we have this analogue of Theorem-Definition \ref{m1-thm}, showing that $\Delta'$ is in fact an $(\H_2,\I)$-structure.
\begin{thmdef}\label{m3-thm}
There is a unique $\H_2$-module  
\[ \cL' =\cL'(W,S)\]
which, as an $\cA$-module, is free with a basis given by the symbols $L'_{w}$ for $w \in \I$, and which satisfies
\[ K_s L'_{w} 
= 
\left\{
\ba 
L'_{sws} \ {\color{white}+}\ &  				&&\ \text{if $sw \neq ws > w$} \\ 
L'_{sws} \ +\  & (v^2-v^{-2})L'_{w} 			&&\ \text{if $s w \neq ws < w$} \\
(v^{-1}+v)L'_{s w} \ -\ & L'_{w}				&&\ \text{if $s w =ws >w$} \\
(v^{-1}-v)L'_{sw} \ +\ &(v^2+1-v^{-2}) L'_{w} 	&&\ \text{if $sw =ws <w$} 
\ea
\right.
\]
for $s \in S$ and $w \in \I$.
\end{thmdef}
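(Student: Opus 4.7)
The plan is to adapt Lusztig and Vogan's proof of Theorem-Definition \ref{m1-thm} (namely \cite[Theorem 0.1]{LV2}), to which the defining formulas for $\cL'$ are structurally parallel. Uniqueness is immediate: the set $\{K_s : s \in S\}$ generates $\H_2$ as an $\cA$-algebra and the displayed formulas determine each $K_s \cdot L'_w$, so at most one $\H_2$-module structure can extend them. For existence, I let $\pi_s \in \End_\cA(\cA\I)$ be the $\cA$-linear operator given by the right-hand side of the prescribed formula applied to $L'_w$, and verify that the assignment $K_s \mapsto \pi_s$ respects the defining relations of $\H_2$: the quadratic relations $\pi_s^2 = \mathrm{id} + (v^2-v^{-2})\pi_s$ and the braid relations $\pi_s\pi_t\pi_s\cdots = \pi_t\pi_s\pi_t\cdots$ (with $m(s,t)$ factors on each side) for every pair $s \neq t$ with $m(s,t) < \infty$.

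The quadratic relation I would verify by a direct case analysis on $w \in \I$, according to whether $s\act w$ equals $sws$ or $sw$ and whether $s\act w > w$ or $s\act w < w$. In each case both $\pi_s^2 L'_w$ and $L'_w + (v^2-v^{-2})\pi_s L'_w$ are manifestly supported on $\{L'_w, L'_{s\act w}\}$, and the matching of coefficients reduces to short scalar identities such as $(v+v^{-1})(v^{-1}-v) = v^{-2}-v^2$ and $(v+v^{-1})(v^2+1-v^{-2}) - (v+v^{-1}) = (v^2-v^{-2})(v+v^{-1})$. The key point is that the two combinations $(v+v^{-1})L'_{sw} - L'_w$ and $(v^{-1}-v)L'_{sw} + (v^2+1-v^{-2})L'_w$ in the $sw = ws$ branches are precisely balanced to make $\pi_s^2 - (v^2-v^{-2})\pi_s - \mathrm{id}$ annihilate $L'_w$.

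For the braid relations I would follow the strategy Lusztig employs for the analogous statement about $\cL$ in \cite[Sections 3, 5]{LV2}: for fixed $s \neq t$ with $m(s,t) < \infty$ and fixed $w \in \I$, the orbit of $w$ under the involutions $s\act$ and $t\act$ is a finite set, so its $\cA$-span is stable under $\pi_s$ and $\pi_t$, and the braid identity reduces to a finite-rank matrix computation. Enumerating the possible orbit types then yields a finite list of explicit dihedral checks. The main obstacle will be bookkeeping: the formulas defining $\cL'$ differ from those defining $\cL$ only in the $sw = ws$ branches, where $(v+v^{-1})L_{sw}+L_w$ becomes $(v+v^{-1})L'_{sw}-L'_w$ and $(v^2-1-v^{-2})L_w$ becomes $(v^2+1-v^{-2})L'_w$. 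Because no simple rescaling $L'_w = c_w L_w$ can simultaneously produce the sign flip on the diagonal term and the constant shift $-1 \mapsto +1$, one cannot transport the result from Theorem-Definition \ref{m1-thm} but must redo the dihedral case verifications by hand with the new coefficients, carefully tracking how the modified $sw=ws$ terms recombine across each braid word.
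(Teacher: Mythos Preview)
Your approach---directly verifying the quadratic and braid relations for the operators $\pi_s$, following Lusztig's case-by-case proof in \cite{LV2}---would succeed, but it is not the route the paper takes, and your assertion that ``one cannot transport the result from Theorem-Definition~\ref{m1-thm}'' is mistaken.

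The paper's proof is a transport argument. Starting from the module $\cL$, write $K_x L_y = \sum_z g_{x,y}^z L_z$ and set $f_{x,y}^z = (-1)^{\rho(y)-\rho(z)} g_{x,y}^z$; then one checks that $K_x L'_y := \sgn(x)\sum_z \overline{f_{x,y}^z}\, L'_z$ defines an $\H_2$-module structure, and that specializing to $x=s$ recovers exactly the displayed formulas. The verification that this is a module is a short formal calculation: the $(-1)^{\rho}$ factors telescope, the $\sgn(x)$ factors are multiplicative in $x$, and applying the bar to each defining relation of $\H_2$ (quadratic and braid) returns a relation of the same shape, so the structure constants of $\cL$ transform into valid structure constants for $\cL'$. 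You were right that a bare diagonal rescaling $L'_w = c_w L_w$ cannot do the job, but you overlooked the extra ingredient of conjugating the structure constants by the bar involution together with the sign $\sgn(x)$; this is precisely what converts the $-1$ to $+1$ in the $sw=ws$ branches.

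The practical difference is substantial: the paper's argument is a few lines and imports the hard dihedral case analysis already done in \cite{LV2}, whereas your plan commits you to redoing that entire analysis from scratch with the modified coefficients. Both are valid, but the transport is far more efficient and is the intended proof here.
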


\begin{proof}
Define $f_{x,y}^z \in \cA$ for $x \in W$ and $y,z \in W$ such that $(-1)^{\rho(y)} K_x  L_y = \sum_{z \in \I} (-1)^{\rho(z)} f^z_{x,y}L_z$.
It is a straightforward exercise to check, using 
the well-known relations defining $\H_2$ (see, e.g., \cite[Section 2.1]{LV2}),  that there is  a unique $\H_2$-module structure on $\cL'$ in which $K_x L'_y = \sgn(x) \sum_{z \in \I} \overline{f_{x,y}^z} L'_z$ for $x \in W$ and $y \in \I$. In this $\H_2$-module structure, the generators $K_s$ for $s \in S$ act on the basis elements $L'_w$ according to the given formula.
\end{proof}

We have this analogue of Theorem-Definition \ref{precanonstruct1-thm}, which shows that $\Delta'$ is pre-canonical.

\begin{thmdef}\label{preL'-thm}
Define 
\begin{itemize}
\item the ``bar involution'' of $\cL'$ to be the $\cA$-antilinear map $\cL'\to\cL'$, denoted $L' \mapsto \overline{L'}$, with 
\[ \overline{L'_{(x,\theta)} } =    \overline {K_x} \cdot L'_{(x^{-1},\theta)}\qquad\text{for }(x,\theta) \in \I.\]
\item the ``standard basis'' of $\cL'$ to be $\{ L'_w\}$ with the partially ordered index set $(\I,\leq)$.
\end{itemize}
This is a pre-canonical
 $\H_2$-module structure on $\cL'$, and 
it admits a canonical basis $\{ \underline L'_w\}$. 
\end{thmdef}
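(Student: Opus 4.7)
My plan is to follow the strategy of Lusztig's proof of Theorem-Definition \ref{precanonstruct1-thm} in \cite{LV2}, adapted to accommodate the different coefficients in the $\H_2$-action on $\cL'$ (as compared with $\cL$). The given formula extends uniquely by $\cA$-antilinearity to a map $\psi' : \cL' \to \cL'$ with $\psi'(L'_{(x,\theta)}) = \overline{K_x} \cdot L'_{(x^{-1},\theta)}$, so well-definedness as an $\cA$-antilinear map is immediate. The substantive content of the theorem consists of three properties: (A) $\psi'$ intertwines the bar involutions of $\H_2$ and $\cL'$ in the sense that $\psi'(h \cdot m) = \overline{h} \cdot \psi'(m)$ for all $h \in \H_2$ and $m \in \cL'$; (B) $\psi'^2 = 1$; and (C) $\psi'(L'_w) \in L'_w + \sum_{y < w} \cA \cdot L'_y$. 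Once these are established, the existence of the canonical basis $\{\underline{L'}_w\}$ follows at once from Du's Theorem \ref{canon-thm}, since the poset $(\I,\leq)$ has finite lower intervals (being a sub-poset of the Bruhat order on $W^+$).

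The heart of the proof is (A). Since $\H_2$ is generated over $\cA$ by the elements $K_s$ for $s \in S$, it is enough to verify the identity $\psi'(K_s \cdot L'_w) = \overline{K_s} \cdot \psi'(L'_w)$ for each $s \in S$ and each $w = (x,\theta) \in \I$. I would handle the four cases of Theorem-Definition \ref{m3-thm} separately. In each case, expand the left-hand side by first applying the relevant formula for $K_s \cdot L'_w$ and then $\psi'$, while expanding the right-hand side by first applying $\psi'$ and then multiplying by $\overline{K_s} = K_s - (v^2 - v^{-2})$. Equality then reduces to an identity of products in $\H_2$, using the standard relations $K_s K_y = K_{sy}$ when $sy > y$ and $K_s K_y = K_{sy} + (v^2 - v^{-2}) K_y$ when $sy < y$ (together with the obvious twists by $\theta$), combined with a second application of the defining formula of $\psi'$ on an appropriate basis element.

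Once (A) is in hand, (B) is automatic by a short calculation. Writing $\overline{K_x} = \sum_y c_y K_y$ in the standard basis of $\H_2$ and applying $\cA$-antilinearity of $\psi'$ together with (A), one finds
\[ \psi'^2(L'_{(x,\theta)}) = \sum_y \overline{c_y}\cdot \psi'(K_y L'_{(x^{-1},\theta)}) = \Bigl(\sum_y \overline{c_y}\, \overline{K_y}\Bigr)\cdot \psi'(L'_{(x^{-1},\theta)}) = K_x \cdot \overline{K_{x^{-1}}}\cdot L'_{(x,\theta)} = L'_{(x,\theta)},\]
using the involution property $\overline{\overline{K_x}} = K_x$ on $\H_2$ and $\overline{K_{x^{-1}}} = K_x^{-1}$. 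For (C), combine the Kazhdan-Lusztig unitriangular expansion $\overline{K_x} \in K_x + \sum_{y < x} \cA \cdot K_y$ in $\H_2$ with an induction on $\ell(y)$ (using the action formulas of Theorem-Definition \ref{m3-thm}) to show that each $K_y \cdot L'_{(x^{-1},\theta)}$ has support contained in $\{(z,\theta) \in \I : z \le x\}$ in the standard basis $\{L'_w\}$; the coefficient of $L'_{(x,\theta)}$ is then exactly $1$, contributed entirely by the leading term $K_x \cdot L'_{(x^{-1},\theta)}$.

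The main obstacle in carrying out this plan is the verification at step (A): the sign changes and the shifted diagonal coefficients in the action of $K_s$ on $\cL'$ (compare the third and fourth lines of the formulas in Theorem-Definition \ref{m3-thm} with those in Theorem-Definition \ref{m1-thm}) mean that one cannot simply quote Lusztig's computation but must redo it from scratch in the modified setting. Once (A) is verified, the remainder of the argument proceeds essentially in parallel with the proof for $\cL$.
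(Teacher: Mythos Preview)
Your overall strategy---verify the intertwining property (A), deduce the involution property (B), and then establish unitriangularity (C)---is sound and would yield a correct proof, but it takes a genuinely different route from the paper. The paper does \emph{not} redo Lusztig's direct case analysis for $\cL'$; instead it transfers the already-proved pre-canonical structure on $\cL$ (Theorem-Definition~\ref{precanonstruct1-thm}) across the sign twist that links $\cL$ and $\cL'$. Concretely, the paper defines an auxiliary $\cA$-antilinear map $L'\mapsto\wt{L'}$ on $\cL'$ by $\wt{L'_w}=\sum_{y}(-1)^{\rho(w)-\rho(y)}\,\overline{r_{y,w}}\,L'_y$, where $r_{y,w}$ are the structure constants of the bar involution on $\cL$, and then shows in one short computation that $\wt{L'_w}=\overline{L'_w}$. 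All three properties (A), (B), (C) then follow immediately from their counterparts on $\cL$, with no new case-by-case verification. Your approach has the virtue of being self-contained; the paper's approach is shorter and, more importantly, makes the relationship between $\cL$ and $\cL'$ explicit, which is reused later (for instance in Proposition~\ref{congr-prop}).

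There is one concrete error in your sketch of (C). You assert that in the expansion $\overline{K_x}\,L'_{(x^{-1},\theta)} = K_x\,L'_{(x^{-1},\theta)} + \sum_{y<x} c_y\,K_y\,L'_{(x^{-1},\theta)}$, the coefficient of $L'_{(x,\theta)}$ is ``contributed entirely by the leading term.'' This is false: already in $W=S_3$ with $w=w_0=sts$ one has $K_s\,L'_{w_0}=L'_t+(v^2-v^{-2})\,L'_{w_0}$, so lower terms \emph{do} contribute to $L'_{(x,\theta)}$. The correct way to prove (C) directly is not via this decomposition but by induction on $\ell(w)$: choose $s$ with $sw<w$, express $L'_w$ in terms of $K_s\,L'_{s\act w}$ and $L'_{s\act w}$ using Theorem-Definition~\ref{m3-thm}, apply $\psi'$ together with (A), and invoke the inductive hypothesis on $s\act w$ combined with standard Bruhat-order properties of twisted involutions (as in \cite{H3}). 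This is exactly how Lusztig handles the analogous step for $\cL$ in \cite[Proposition~4.4]{LV2}, and your plan should be amended accordingly.
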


By Lemma \ref{uniquepre-lem}, this is the unique pre-canonical $\H_2$-module structure on $\cL'$ in which $\{ L'_w\}$ is the ``standard basis.''

\begin{proof}
Define $r_{y,w} \in \cA$ for $y,w \in I$ such that $\overline{L_w} = \sum_{y\in \I} r_{y,w} L_y$ and let $f_{x,y}^z$ 
be as in the proof of Theorem-Definition \ref{m3-thm}.
Let $L \mapsto \wt L$ be the $\cA$-antilinear map with $\wt {L'_w} = \sum_{y \in \I} (-1)^{\rho(w)-\rho(y)}\cdot \overline{r_{y,w}}\cdot L'_y$ for $w \in \I$. 
We claim that $\wt L = \overline L$ for all $L \in \cL'$.
To prove this, we note that if $w = (x,\theta) \in \I$ then
\[ K_{x^{-1}} \wt {L'_w}= \sgn(x) \sum_{y\in \I} \sum_{z \in \I} (-1)^{\rho(z)-\rho(y)} \cdot \overline{r_{y,w}\cdot  f_{x^{-1},y}^z} \cdot L'_z
\]
while  
\[ L_w  = \sgn(x) K_{x^{-1}} \overline{L_{w}} = \sgn(x) \sum_{y\in \I} \sum_{z \in \I} (-1)^{\rho(z)-\rho(y)} \cdot r_{y,w} \cdot f_{x^{-1},y}^z\cdot  L_z.
\]
We deduce that $K_{x^{-1}} \wt {L'_w} = L'_w = K_{x^{-1}} \overline{L'_w}$ since the right side of the first equation is the image of the right side of the second under the $\cA$-antilinear map $\cL \to \cL'$ with $L_z \mapsto  L'_z$ for $z \in \I$.
Since $K_{x^{-1}}$ is invertible
this shows that $\wt {L'_w} = \overline{ L'_w} $ 
for $w \in \I$ which suffices to prove our claim.

Given the claim, it follows from Theorem-Definition \ref{precanonstruct1-thm} that the bar involution and standard basis of $\cL'$ form a pre-canonical structure, and it is easy to show that the identity $\overline{K_s L_w} = \overline{K_s} \cdot \overline{L_w}$ implies $\overline{K_s L'_w} = \overline{K_s} \cdot \overline{L'_w}$ for $s \in S$ and $w \in \I$.
Hence the bar involution and standard basis of $\cL'$ form a pre-canonical $\H_2$-module structure, which admits a canonical basis $\{ \underline L'_w\}$ by Theorem \ref{canon-thm}.
\end{proof}

We spend the rest of this section establishing a few  properties of the canonical basis $\{ \underline L'_w\}$. 
Define $\pi'_{y,w} \in \ZZ[v^{-1}]$ for  $y,w \in \I$ as the polynomials such that 
  $\underline L'_w  =  \sum_{y \in \I} \pi'_{y,w} L'_y$. 
  We introduce some notation to state a recurrence for computing these polynomials.
First, for $y,w \in \I$ let
 \ben
 \item[] $  \mu'(y,w)
  = (\text{the coefficient of $v^{-1}$ in $\pi'_{y,w}$}),
$
\item[]
$
  \mu''(y,w)
  =   (\text{the coefficient of $v^{-2}$ in $\pi'_{y,w}$})+(v+v^{-1}) \mu'(y,w) .
  $
  \een
Next, for $s \in S$ and $y,w \in \I$ define
\ben
\item[] $\mu'(s,y,w) =  \delta_{sy<y}\cdot \mu''(y,w) + \delta_{sy,ys} \cdot (\ell(y)-\ell(sy)) \cdot \mu'(sy,w)
  - \ds\sum_{\substack{y< z < w \\ sz<z}} \mu'(y,z)\mu'(z,w).$
  \een
  Here $\delta_{sy<y}$ is 1 if $sy<y$ and 0 otherwise. 
 In what follows,  recall that $\underline K_s = K_s + v^{-2}$ for $s \in S$.

  \begin{proposition}\label{L'rec-prop} Let $w \in \I$ and $s \in S$ be such that $w<sw$.
  \ben
  \item[(a)] If $sw\neq ws$ then $\underline K_s \underline L'_w =  \underline L'_{s ws} + \sum_{y<sws} \mu'(s,y,w) \underline L'_y$.
  
    \item[(b)] If $sw= ws$ then $\underline K_s \underline L'_w
    =
    (v+v^{-1}) \underline L'_{s w} - \underline L'_w + \sum_{y<sw}  (\mu'(s,y,w)-\mu'(y,sw)) \underline L'_y
$.
    
  \een
  \end{proposition}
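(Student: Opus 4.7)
The plan is to mirror Lusztig's derivation of the analogous recurrence for $\{\underline L_w\}$ in \cite[Theorem 6.3]{LV2}, working inside $\cL'$ with the $K_s$-action from Theorem-Definition \ref{m3-thm}. Since both $\underline K_s$ and $\underline L'_w$ are bar-invariant, so is $\underline K_s \underline L'_w$, and therefore it admits a unique expansion $\sum_z \beta_z \underline L'_z$ into the canonical basis with each coefficient $\beta_z$ forced to lie in the bar-invariant subring $\ZZ[v+v^{-1}] \subset \cA$. The task is to identify these $\beta_z$ and match them against the stated formulas.

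First I would compute $\underline K_s L'_y = (K_s + v^{-2}) L'_y$ explicitly for each of the four regimes of $(s,y)$ in Theorem-Definition \ref{m3-thm}, then substitute into
\[
\underline K_s \underline L'_w \ =\ \sum_{y \leq w} \pi'_{y,w}\cdot \underline K_s L'_y
\]
and collect coefficients to write $\underline K_s \underline L'_w = \sum_z \alpha_z L'_z$ with each $\alpha_z \in \cA$ expressed in terms of the $\pi'_{y,w}$ and the action constants. In case (a), the hypothesis $w < sw$ with $sw \neq ws$ forces $sws > w$; the explicit formulas then show that the maximal index $z$ with $\alpha_z \neq 0$ is $z = sws$ with $\alpha_{sws} = 1$, so the coefficient of $\underline L'_{sws}$ in the canonical basis expansion must equal $1$. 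In case (b), the maximal contributing indices are $sw$ and $w$; tracking the action in the commuting regime shows that $\beta_{sw} = v+v^{-1}$ and $\beta_w = -1$ are forced by the combination of the leading coefficient $\alpha_{sw} = v+v^{-1}$, the unitriangular shape of $\underline L'_{sw}$, and bar-invariance.

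To pin down $\mu'(s,y,w)$ as the coefficient of $\underline L'_y$ for each $y$ strictly below the leading indices, I would subtract off the contributions of the already-identified leading canonical basis elements and inspect the residual coefficient of $L'_y$. This coefficient naturally splits into the three pieces in the definition of $\mu'(s,y,w)$: the $\delta_{sy<y}\cdot \mu''(y,w)$ summand arises from the fourth action regime of $K_s$ on $L'_y$ combined with the $v^{-2}$ shift from $\underline K_s$, which collects the coefficient of $v^{-2}$ together with the coefficient of $v^{-1}$ of $\pi'_{y,w}$; the $\delta_{sy,ys}\cdot (\ell(y)-\ell(sy))\cdot \mu'(sy,w)$ summand comes from the third action regime in the commuting case, where the off-diagonal action coefficient pairs with the leading $v^{-1}$-coefficient of $\pi'_{sy,w}$; and the subtraction $\sum_{y<z<w,\ sz<z} \mu'(y,z)\mu'(z,w)$ accounts for the recursive substitution $\underline L'_z = L'_z + \sum_{y'<z} \pi'_{y',z} L'_{y'}$ needed to absorb each intermediate canonical basis element already placed at coefficient $\mu'(z,w)$.

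The main obstacle will be the careful sign bookkeeping in this last extraction. Passing from $\cL$ to $\cL'$ flips the sign of the diagonal action in the commuting case $sw = ws > w$ and flips the sign of the off-diagonal coefficient in the commuting case $sw = ws < w$, and these sign flips propagate through the substitution and must be tracked precisely in order for the residual coefficient to collapse into the specific combinatorial expression defining $\mu'(s,y,w)$. Once this matching is verified, cases (a) and (b) follow directly from the identification of the leading canonical basis terms above, the distinction being whether the top contribution is the single term $\underline L'_{sws}$ or the pair $(v+v^{-1})\underline L'_{sw} - \underline L'_w$.
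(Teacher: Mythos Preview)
Your approach is essentially the same as the paper's: both arguments rest on the observation that the difference between the two sides is bar-invariant and lies in $\sum_{y < s\act w} v^{-1}\ZZ[v^{-1}]\cdot L'_y$, hence vanishes. The paper states this in two lines and leaves the verification of the $v^{-1}\ZZ[v^{-1}]$ condition as ``a straightforward exercise from Theorem-Definition \ref{m3-thm}'', whereas you spell out that exercise as an iterative top-down extraction of the bar-invariant coefficients $\beta_z$; one small imprecision is that in your account of the subtraction term the intermediate canonical basis elements are placed at coefficient $\mu'(s,z,w)$ rather than $\mu'(z,w)$, but this does not affect the validity of the strategy.
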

  
  \begin{remark}
 Lusztig  \cite[Theorem 6.3(c)]{LV2}
  shows that the canonical basis $\{\underline L_w\} \subset \cL$  in the previous section is such that $\underline K_s \underline L_w = (v^2+v^{-2}) \underline L_{ w}$ if $s \in S$ and $w \in \I$ and $sw<w$. This property has no simple analogue for the canonical basis $\{ \underline L'_w\} \subset \cL'$.
  \end{remark}
  
  \begin{proof}
Each part of the proposition follows by showing that  the difference between the two sides of the desired equality both (i) is an element of the set $\sum_{y < s\act w} v^{-1} \ZZ[v^{-1}] \cdot L'_y$
and (ii) is invariant under the bar operator of $\cL'$. Since the only such element with these two properties is 0, the given identities must hold. The observation (ii) follows in either case from Theorem-Definition \ref{preL'-thm}, while showing that property (i) holds is a straightforward exercise from Theorem-Definition \ref{m3-thm}.
  \end{proof}

Write $f \equiv g \modu 2)$ if $f,g \in \cA$ are such that $f-g \in 2 \cA$,
and define $\pi_{y,w}$ and $h_{y,w}$ for $y,w \in \I$ as in the previous section.
 We note the following relationship between  $\pi'_{y,w}$, $\pi_{y,w}$, and $h_{y,w}$.
 
  \begin{proposition}\label{congr-prop} For all $y,w \in \I$ it holds that $\pi'_{y,w} \equiv \pi_{y,w} \equiv h_{y,w} \modu 2)$.
  \end{proposition}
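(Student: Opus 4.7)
The second congruence $\pi_{y,w} \equiv h_{y,w} \pmod 2$ follows immediately from the result of Lusztig cited in the preceding discussion, namely \cite[Theorem 9.10]{LV2}, which asserts $\tfrac{1}{2}(h_{y,w} + \pi_{y,w}) \in \ZZ[v^{-1}]$; this says precisely that $h_{y,w} + \pi_{y,w} \in 2\cA$, and the claim follows. The substantive content is thus the first congruence $\pi'_{y,w} \equiv \pi_{y,w} \pmod 2$.

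My plan is to observe that the pre-canonical $\H_2$-module structures on $\cL$ and $\cL'$ become identical after reduction modulo $2$, and to invoke the uniqueness of canonical bases in the quotient. Comparing Theorem-Definitions \ref{m1-thm} and \ref{m3-thm} case by case, the coefficients describing $K_s \cdot L_w$ and $K_s \cdot L'_w$ differ only when $sw = ws$, and in those cases the differences $(v+v^{-1}) - (v^{-1}+v) = 0$, $1 - (-1) = 2$, $(v-v^{-1}) - (v^{-1}-v) = 2(v - v^{-1})$, and $(v^2-1-v^{-2}) - (v^2+1-v^{-2}) = -2$ all lie in $2\cA$. The bar involutions in Theorem-Definitions \ref{precanonstruct1-thm} and \ref{preL'-thm} differ only by the prefactor $\sgn(x) \in \{\pm 1\}$, which reduces to $1$ modulo $2$. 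Consequently, the $\cA$-linear bijection $\phi : \cL \to \cL'$ defined by $\phi(L_w) = L'_w$ satisfies $\phi(K_s L) \equiv K_s\, \phi(L) \pmod 2$ and $\phi(\overline L) \equiv \overline{\phi(L)} \pmod 2$ for all $s \in S$ and $L \in \cL$, the second relation following from the first by a short computation using $\overline{L_{(x,\theta)}} = \sgn(x) \overline{K_x} L_{(x^{-1},\theta)}$.

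From these congruences, $\phi(\underline L_w)$ and $\underline L'_w$ both satisfy, modulo $2$, the defining properties (C1) and (C2) of a canonical basis for the pre-canonical structure on $\cL'$. The uniqueness argument of Proposition \ref{unique-prop} then transposes to $\cL'/2\cL'$: if some $c_y = \pi_{y,w} - \pi'_{y,w}$ were not in $2\cA$, taking $y_0$ maximal with this property, bar-invariance modulo $2$ together with the diagonal normalization $r'_{z,z} = 1$ (where $\overline{L'_z} \in L'_z + \sum_{y < z} \cA \cdot L'_y$) would force $c_{y_0}$ to be bar-invariant modulo $2$; since any element of $v^{-1}\ZZ[v^{-1}]$ that is bar-invariant modulo $2$ already lies in $2\cA$, this contradicts the choice of $y_0$. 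The main obstacle in the plan is verifying $r'_{z,z} = 1$ for all $z \in \I$, which requires a short induction on $\ell(z)$ from the explicit action formulas; granting this, the rest of the argument is a direct transcription of the proof of Proposition \ref{unique-prop}.
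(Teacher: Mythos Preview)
Your argument is correct and in fact cleaner than the paper's. The paper proves $\pi'_{y,w}\equiv\pi_{y,w}\pmod 2$ by an induction on $\rho(w)$ that runs through the explicit recurrences of Proposition~\ref{L'rec-prop} (and the analogous recurrence for $\underline L_w$): it introduces auxiliary elements $X_{s,w},X'_{s,w}$, compares the polynomials $a_{s,y,w}$ and $a'_{s,y,w}$ appearing in $\underline K_s\underline L_w$ and $\underline K_s\underline L'_w$, and then, in the case $sw=ws$, performs a second nested induction on $\rho(sw)-\rho(y)$ to extract the congruence for each coefficient. Your approach bypasses all of this by observing once that both the $\H_2$-actions and the bar involutions on $\cL$ and $\cL'$ agree modulo $2$, and then running the standard uniqueness argument for canonical bases over $\cA/2\cA$. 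This is a genuinely more conceptual route: the paper's proof needs the recurrence for $\underline K_s\underline L'_w$, whereas yours only needs the formulas for $K_s$ on standard basis elements and for the two bar operators.

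One minor point: the ``main obstacle'' you flag, namely $r'_{z,z}=1$, is not an obstacle at all. It is exactly the unitriangularity condition in the definition of a pre-canonical structure, and is already part of the statement of Theorem-Definition~\ref{preL'-thm}; no separate induction is required. With that observation your proof is complete as written.
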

  
  \begin{proof}
The second congruence is \cite[Theorem 9.10]{LV2}. For $F \in \cL$ and $G \in \cL'$, we write $F \equiv G\modu 2)$ if $F = \sum_{y \in \I} f_y L_y$ and $G = \sum_{y \in \I} g_y L'_y$ for some polynomials $f_y,g_y \in \cA$ with $ f_y \equiv g_y \modu 2)$ for all $y \in \I$. To prove the first congruence we must show that $\underline L_w \equiv \underline L'_w \modu 2)$ for all $w \in \I$. This automatically holds if $\rho(w) = 0$. Let  $w \in \I$ and $s \in S$ be such that $w<sw$ and assume $\underline L_y \equiv \underline L'_y \modu 2)$ if $y<s\act w$.
It suffices to show under this hypothesis that
\be\label{congr-eq}\underline L_{s\act w} \equiv \underline L'_{s\act w} \modu 2).\ee
Towards this end, define $\mu(y,w) \in \ZZ$ for $y,w \in \I$ as the coefficient of $v^{-1}$ in $\pi_{y,w}$, and let
  \[ X_{s, w} = \begin{cases} \underline L_{sws}&\text{if }sw\neq ws \\
  (v+v^{-1}) \underline L_{sw} - \sum_{y<sw} \mu(y,sw) \underline L_y&\text{if }sw=ws
  \end{cases}
\]
and
\[
  X'_{s, w} = \begin{cases} \underline L'_{sws}&\text{if }sw\neq ws \\
  (v+v^{-1}) \underline L'_{sw} - \sum_{y<sw} \mu'(y,sw) \underline L'_y&\text{if }sw=ws.
  \end{cases}
  \]
 We claim that to prove the congruence \eqref{congr-eq} it is enough show that $X_{s, w} \equiv X'_{s, w} \modu 2)$.
 This is obvious if $sw\neq ws$ so assume $sw= ws$ and $X_{s, w} \equiv X'_{s, w} \modu 2)$. We must check that 
 $\pi'_{y,s w} \equiv \pi_{y,s w} \modu 2)$ for all $y \leq sw$; 
to this end we argue by induction on $\rho(sw)-\rho(y)$. By definition $\pi'_{s w,s w} = \pi_{s w,s w} = 1$. Fix $y < sw$ and suppose $\pi'_{z,s w} \equiv \pi_{z,s w} \modu 2)$ for $y <z\leq sw$.
 The congruence $X_{s, w} \equiv X'_{s, w} \modu 2)$ implies
  \[ (v+v^{-1}) \pi_{y,sw} - \sum_{y\leq z<sw} \mu(z,sw)\pi_{y,z} \equiv 
 (v+v^{-1}) \pi'_{y,sw} -  \sum_{y\leq z<sw} \mu'(z,sw)\pi'_{y,z} \modu 2).\]
By hypothesis, 
 the terms indexed by $z>y$ in the sums on either side of this congruence cancel, and we obtain
  \[ (v+v^{-1}) \pi_{y,sw} -  \mu(y,sw)\equiv 
 (v+v^{-1}) \pi'_{y,sw} -  \mu'(y,sw) \modu 2).\] 
 It is an elementary exercise, noting that $\pi_{y,sw}$ and $ \pi'_{y,sw} $ both belong to $ v^{-1}\ZZ[v^{-1}]$, to show that this congruence implies $\pi_{y,sw} \equiv \pi'_{y,sw} \modu 2)$, and so  we conclude by induction that \eqref{congr-eq} holds.
  This proves our claim.
  
  We now argue that $X_{s, w} \equiv X'_{s, w}\modu 2)$.
For this we observe that there are unique polynomials $a_{s,y,w},a'_{s,y,w} \in \cA$  such that
\[
  X_{s, w}= \underline K_s \underline L_w- \sum_{y<s\act w} a_{s,y,w} \underline L_y
\qquand
  X'_{s, w} = \underline K_s \underline L'_w- \sum_{y<s\act w} a'_{s,y,w} \underline L'_y.
\]
Indeed, the polynomials $a'_{s,y,w}$ are given by Proposition \ref{L'rec-prop},
and an entirely analogous statement decomposing the product $\underline K_s \underline L_w$ gives the polynomials $a_{s,y,w}$. It is not difficult to show, by deriving a formula for $a_{s,y,w}$ similar to the one for $\mu'(s,y,w)$, that the  hypothesis $\underline L_y \equiv \underline L'_y \modu 2)$ for $y<s\act w$ implies $a_{s,y,w} \equiv a'_{s,y,w} \modu 2)$. Hence to prove $X_{s, w} \equiv X'_{s, w}\modu 2)$
we need only check that $\underline K_s \underline L_w \equiv \underline K_s \underline L'_w \modu 2)$. As we assume $\underline L_w \equiv \underline L'_w \modu 2)$, this follows by comparing Theorem-Definitions \ref{m1-thm} and \ref{m3-thm}, which 
shows more generally that $\underline K_s F \equiv \underline K_s G \modu 2)$ whenever $F \in \cL$ and $G \in \cL'$ such that $F \equiv G \modu 2)$.
  \end{proof}

   The polynomials $\pi'_{y,w}$ also satisfy the same degree bound as $\pi_{y,w}$ and $h_{y,w}$.
   
  \begin{proposition}\label{deg3-prop} If $y,w \in \I$ such that $y \leq w$ then $v^{\ell(w)-\ell(y)}  \pi'_{y,w} \in 1+ v^2\ZZ[v^2]$.
\end{proposition}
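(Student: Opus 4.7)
My plan is to proceed by induction on $\ell(w)$, mimicking Lusztig's proof of Proposition \ref{degbound2-prop} in \cite[Section 4.9(c)]{LV2}, which establishes the analogous bound for $\pi_{y,w}$. The base case $\ell(w)=0$ forces $w=1$, and then $\pi'_{1,1}=1$ gives the claim trivially.

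For the inductive step, I pick $s \in S$ with $\ell(sw)<\ell(w)$ and set $w_0 = s\act w \in \I$; by Theorem \ref{rho-def}, $w_0 < w$ in $\I$. The inductive hypothesis then applies to $\pi'_{y',w'}$ for all $w' \in \I$ with $\ell(w') < \ell(w)$, and in particular to $w_0$. Using Proposition \ref{L'rec-prop}, I express $\underline L'_w$ (or $(v+v^{-1})\underline L'_w$, in Case (b)) in terms of $\underline K_s \underline L'_{w_0}$ and canonical basis elements indexed by strictly smaller twisted involutions. To read off $\pi'_{y,w}$, I expand $\underline L'_{w_0}$ in the standard basis, apply Theorem-Definition \ref{m3-thm} to compute the action of $K_s$ on each $L'_z$, and collect the coefficient of $L'_y$. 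The crucial observation is that every coefficient in Theorem-Definition \ref{m3-thm} has a controlled parity in $v$: the ``diagonal'' coefficients $v^2 - v^{-2}$ and $v^2+1-v^{-2}$ lie in $\ZZ[v^2, v^{-2}]$, while the ``off-diagonal'' coefficients $v^{-1}+v$ and $v^{-1}-v$ lie in $v\,\ZZ[v^2, v^{-2}]$. Combined with the length increment $\ell(s\act z)-\ell(z) = \pm 1$, this parity bookkeeping is exactly what is needed to conclude that each contribution to $v^{\ell(w)-\ell(y)}\pi'_{y,w}$ lies in $1 + v^2 \ZZ[v^2]$.

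I expect the main obstacle to be Case (b) of Proposition \ref{L'rec-prop}, where $sw_0 = w_0 s$: there $\underline L'_w$ appears on the right-hand side with coefficient $v+v^{-1}$, mixed with terms $\mu'(y,w)\underline L'_y$ whose coefficients depend on the very polynomials $\pi'_{y,w}$ we wish to bound. To handle this, I plan a secondary induction on $\rho(w)-\rho(y)$ for fixed $w$, so that by the time we solve for $\pi'_{y,w}$ the values $\mu'(z,w)$ appearing on the right-hand side (for $y < z < w$) have already been controlled. Since $v+v^{-1}$ shares the ``odd'' parity of the off-diagonal coefficients from Theorem-Definition \ref{m3-thm}, dividing through by this factor preserves membership in $1+v^2\ZZ[v^2]$, so the induction closes. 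As a sanity check, Proposition \ref{congr-prop} gives $\pi'_{y,w} \equiv \pi_{y,w} \pmod 2$, which combined with Proposition \ref{degbound2-prop} confirms the desired shape at least modulo $2$; the inductive argument sketched above is what upgrades this congruence to an equality.
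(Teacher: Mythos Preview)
Your proposal is correct and follows essentially the same route as the paper: induction on the rank of $w$ via Proposition~\ref{L'rec-prop}, the parity bookkeeping on the coefficients from Theorem-Definition~\ref{m3-thm}, and a secondary induction on $\rho(w)-\rho(y)$ to handle Case~(b). The only point where the paper is more explicit is the ``dividing through by $v+v^{-1}$'' step: there one actually obtains an identity of the shape $(v^2+1)\,v^{\ell(w)-\ell(y)}\pi'_{y,w} - v^{\ell(w)-\ell(y)+1}\mu'(y,w) \in 1+v^2\ZZ[v^2]$, and one must first argue that the odd-degree term forces $\mu'(y,w)=0$ when $\ell(w)-\ell(y)$ is even before concluding $v^{\ell(w)-\ell(y)}\pi'_{y,w}\in 1+v^2\ZZ[v^2]$; your sketch would benefit from spelling this out.
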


\begin{proof}
The proposition holds if $\rho(w) = 0$ since then $\pi'_{y,w} = \delta_{y,w}$. 
Let $w \in \I$ and $s \in S$ be such that $w<sw$ and assume $v^{\ell(z)-\ell(y)}  \pi'_{y,z} \in 1+ v^2\ZZ[v^2]$ for all $y \leq z < s\act w$.
It suffices   to show   under this hypothesis that
\be\label{v^eq} v^{\ell(s\act w) - \ell(y)} \pi '_{y,s\act w} \in 1 + v^2 \ZZ[v^2]\qquad\text{for all $y \in \I$ with $y\leq s\act w$.}\ee
To this end, define $X'_{s, w}$ as in the proof of Proposition \ref{congr-prop} and let 
$p_y \in \cA$ for $y \in \I$ be such that 
$ X'_{s, w} = \sum_{y \leq s\act w} p_y L'_y.$
We claim that to prove \eqref{v^eq} it is enough to show that 
\be\label{u^eq} v^{\ell(w) - \ell(y)  + 2} p_y \in 1 + v^2 \ZZ[v^2]\qquad\text{for all $y \in \I$ with $y \leq s\act w$.}\ee
 This follows when $sw\neq ws$ as then $\ell(s\act w) = \ell(w) +2$ and $p_y = \pi'_{y,s\act w}$. Alternatively, suppose that  $sw=ws$ and 
 \eqref{u^eq} holds. We then have
\be\label{py-eq}  p_y =(v+v^{-1}) \pi'_{y,sw} -\mu'(y,sw) - \sum_{y < z < sw} \mu'(z,sw) \pi'_{y,z}.\ee
To deduce  \eqref{v^eq}, we argue by induction on $\ell(s w) -\ell(y)$. If $y =s w$ then the desired containment holds automatically. Let $y < s w$ and suppose $v^{\ell(s w) - \ell(z)} \pi'_{z,s w} \in 1 + v^2 \ZZ[v^2]$ for $y <z \leq s w$.
Then  $\mu'(z,sw)$   is nonzero for $z>y$ only if $\ell(w)-\ell(z)$ is even, 
so if we multiply both sides of \eqref{py-eq} by $v^{\ell(w)-\ell(y)+2}$, then it follows from
 \eqref{u^eq} via our inductive hypothesis that 
 \[      (v^2+1) v^{\ell( s  w) - \ell(y)}  \pi'_{y,s  w} - v^{\ell( s  w) - \ell(y)+1}   \mu'(y,sw) \in 1+ v^2 \ZZ[v^2].\]
 Since we always have $\pi'_{y,sw} \in v^{-1}\ZZ[v^{-1}]$ and $\mu'(y,sw) \in \ZZ$, this containment can only hold if $\mu'(y,sw) = 0$ whenever $\ell(sw)-\ell(y)$ is even. We deduce from this that in fact
 \[   (v^2+1) v^{\ell( s  w) - \ell(y)}  \pi'_{y,s  w} \in 1 + v^2 \ZZ[v^2]\]
 and it is easy to see that this implies $v^{\ell( s  w) - \ell(y)}  \pi'_{y,s  w} \in 1 + v^2 \ZZ[v^2]$, which is what we needed to show. We conclude by induction that \eqref{u^eq} implies \eqref{v^eq}.

We now argue that \eqref{u^eq} holds.  
Fix $y \leq s\act w$.
Proposition \ref{L'rec-prop}
then implies
\[ 
p_y = (a+\delta_{sw,ws}) \cdot \pi_{y,w}' + b\cdot \pi'_{s\act y,w} - \Sigma
\]
where 
\[
(a,b)
=
\begin{cases}
 (v^{-2},\ 1) &\text{if  $sy\neq ys>y$}
 \\
 (v^{2},\ 1) &\text{if  $sy\neq ys<y$}
 \\
 (v^{-2}-1,\  v^{-1}-v) &\text{if  $sy= ys>y$}
 \\
 (v^{2}+1,\ v^{-1}+v) &\text{if  $sy= ys<y$}
 \end{cases}
 \qquand
 \Sigma = \sum_{z<s\act w} \mu'(s,z,w)\pi'_{y,z}.
 \]
Since we  assume that 
  $v^{\ell(z')-\ell(z)} \pi'_{z,z'}  \in 1+ v^2\ZZ[v^2]$ for  $z\leq z'  \leq w$, inspecting our definition shows that $\mu'(s,z,w) $ 
is an integer when $\ell(w)-\ell(z)$ is even and an integer multiple of $v+v^{-1}$ when $\ell(w)-\ell(z)$ is odd. 
Consequently, it follows that 
\[v^{\ell(w)-\ell(y)+2} \Sigma \in v^2 \ZZ[v^2].\]
In turn, since $y \leq s\act w$,  \cite[Lemma 2.7]{H3} implies that $s\act y \leq w$ if $sy<y$ and that
$y \leq w$ if $sy>y$. Using this fact and the hypothesis stated in the second sentence of this proof, one checks that 
\[ v^{\ell(w)-\ell(y)+2} \( (a+\delta_{sw,ws})\cdot  \pi'_{y,w} + b\cdot \pi'_{s\act y,w} \)\in 1 + v^2 \ZZ[v^2].\]
Combining these observations, we conclude  that \eqref{u^eq} holds.
\end{proof}

Despite these results,
there does not appear to be any simple relationship between the polynomials $\pi_{y,w}$ and $\pi'_{y,w}$, and it is unclear what positivity properties the latter polynomials possess, if any. In general, $\pi'_{y,w}$ may have both positive and negative coefficients. The combination of Propositions \ref{degbound1-prop}, \ref{degbound2-prop}, \ref{congr-prop}, and \ref{deg3-prop} shows that 
\be\label{hpi'-eq} \tfrac{1}{2}\( h_{y,w} \pm \pi'_{y,w}\) \qquand \tfrac{1}{2}\(\pi_{y,w} \pm \pi'_{y,w}\)\ee
are polynomials in $v^{-1}$ with integer coefficients, which become polynomials in $v^2$ when multiplied by $v^{\ell(w)-\ell(y)}$. Unlike the analogous polynomials 
$\tfrac{1}{2}\( h_{y,w} \pm \pi_{y,w}\)$ discussed at the end of the previous section (which conjecturally belong to $\NN[v^{-1}]$),  the four polynomials in \eqref{hpi'-eq} can each have both positive and negative coefficients.

\subsection{A third canonical basis for twisted involutions} \label{precanon2-sect}


We finally prove here that the matrix $\Gamma$ from Section \ref{16-sect} is a pre-canonical $(\H,\I)$-structure.
This provides us with another a canonical basis indexed by the twisted involutions in a Coxeter group, but not related in any transparent way to our other bases $\{ \underline L_w\}$ and $\{ \underline L'_w\}$.
It is  an open problem to find an interpretation of this third basis.

\begin{thmdef}\label{m2-thm}
There is a unique $\H$-module  
\[ \cI =\cI(W,S)\]
which, as an $\cA$-module, is free with a basis given by the symbols $I_{w}$ for $w \in \I$, and which satisfies

\[ H_s I_{w} 
= 
\left\{
\ba 
I_{sws} \ {\color{white}+}\ &  			&&\ \text{if $s\act w =sws > w$} \\ 
I_{sws} \ +\  & (v-v^{-1})I_{w} 			&&\ \text{if $s\act w =sws < w$} \\
I_{s w} \ +\ & I_{w}					&&\ \text{if $s\act w =sw >w$} \\
(v-v^{-1})I_{sw} \ +\ &(v-1-v^{-1}) I_{w} 	&&\ \text{if $s\act w =sw <w$} 
\ea
\right.
\]
for $s \in S$ and $w \in \I$.
\end{thmdef}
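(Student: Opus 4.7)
The plan is to treat this essentially as the construction of a Hecke module from its presentation, in the spirit of Lusztig and Vogan's construction of $\cL$ in Theorem-Definition \ref{m1-thm}. Uniqueness is immediate: since $\H$ is generated by $\{H_s : s \in S\}$ as an $\cA$-algebra, once the action of each generator on each basis vector $I_w$ is prescribed, any $\H$-module structure on $\cA\I$ extending these formulas is completely determined.

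For existence, the strategy is to define $\cA$-linear endomorphisms $\sigma_s := \rho_\Gamma(H_s) \in \End(\cA\I)$ via the displayed formulas, and then verify the defining relations of $\H$: namely the quadratic relations $(\sigma_s - v)(\sigma_s + v^{-1}) = 0$ for $s \in S$, and the braid relations $\underbrace{\sigma_s\sigma_t\sigma_s\cdots}_{m_{st}} = \underbrace{\sigma_t\sigma_s\sigma_t\cdots}_{m_{st}}$ for each pair with $m_{st}$ finite. The quadratic relation is the easier verification: for each $s \in S$ and $w \in \I$ one checks the identity $\sigma_s^2 I_w = I_w + (v-v^{-1})\sigma_s I_w$ by inspection in the four cases for $s\act w$. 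The key combinatorial input is that $s\act(s\act w) = w$ and that the relative Bruhat order of $w$ and $s\act w$ flips on iterating, so the second application of $\sigma_s$ always falls in the case opposite to the first; each case then reduces to a short polynomial manipulation (for example, when $sw = ws > w$ one computes $\sigma_s^2 I_w = \sigma_s(I_{sw}+I_w) = (v-v^{-1})I_w + (v-1-v^{-1})I_{sw} + (v-v^{-1})I_{sw} + I_w$, which matches $I_w + (v-v^{-1})(I_{sw}+I_w)$).

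The main obstacle will be the braid relations. For each finite $m = m_{st}$ these need only be checked inside the dihedral parabolic subgroup $\langle s,t\rangle$, so the verification reduces to analyzing how the orbit of a given $w \in \I$ under the operations $s\act\cdot$ and $t\act\cdot$ sits inside $\I$. The structure of such orbits is controlled by classical results on twisted involutions in dihedral Coxeter groups, as in \cite{H1}, and admits an explicit enumeration of finitely many orbit types. For each such type, one verifies the braid relation as a polynomial identity among the $\cA$-coefficients in the span of the orbit. This is a direct parallel of Lusztig's verification of the analogous relation for $\cL$ in \cite{LV2}, the difference being that the coefficients here live in $v$ rather than $v^2$, and that the constants in the commuting cases ($sw=ws$) are rescaled.

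A tempting alternative would be to bootstrap directly from the existence of $\cL$, since the formulas for $\cI$ bear a formal resemblance to those for $\cL$. However, because $\H$ and $\H_2$ are related only by the nonlinear substitution $v \mapsto v^2$, there is no transparent $\cA$-linear rescaling or change of basis that converts an $\H_2$-module on $\cA\I$ into an $\H$-module on $\cA\I$; the trick used in Theorem-Definition \ref{m3-thm}, which stayed internal to $\H_2$, is therefore not available. I expect the proof must ultimately proceed by the direct dihedral case analysis sketched above.
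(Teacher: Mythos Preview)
Your direct verification of the quadratic and braid relations would certainly work, and nothing in it is wrong. But the paper takes precisely the ``tempting alternative'' you dismiss, and the trick you missed is worth knowing.

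You correctly observe that no $\cA$-linear rescaling can turn an $\H_2$-module into an $\H$-module. The paper sidesteps this by using a $\ZZ$-linear map instead. Define $J_w = (v+v^{-1})^{2\rho(w)-\ell(w)} L_w \in \cL$. A short computation in each of the four cases (using Theorem \ref{rho-def} to track how $2\rho-\ell$ changes under $w\mapsto s\act w$) shows that in the basis $\{J_w\}$ the action of $K_s$ has all coefficients in $\ZZ[v^2,v^{-2}]$: for instance, in the case $sw=ws<w$ the factor $(v-v^{-1})$ in front of $L_{sw}$ combines with one extra factor of $(v+v^{-1})$ from the rescaling to give $(v^2-v^{-2})J_{sw}$. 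Now let $\cJ$ be the $\ZZ[v^2,v^{-2}]$-span of the $J_w$, and let $\phi:\cI\to\cJ$ be the $\ZZ$-linear bijection with $v^nI_w\mapsto v^{2n}J_w$. With $\Phi:\H\to\H_2$ the ring embedding $v^nH_w\mapsto v^{2n}K_w$, the formula $H\cdot I = \phi^{-1}(\Phi(H)\,\phi(I))$ transports the $\H_2$-module structure on $\cJ$ to an $\H$-module structure on $\cI$, and the resulting action of $H_s$ on $I_w$ is exactly the one displayed.

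So the comparison: your approach reproves the dihedral braid relations from scratch, parallel to but independent of Lusztig's work in \cite{LV2}; the paper's approach reuses that work wholesale via a rescaling that makes the coefficients ``square,'' at the cost of a slightly subtle non-$\cA$-linear transfer. The paper's route is much shorter and also explains conceptually why the $\cI$-formulas look like the $\cL$-formulas with $v^2$ replaced by $v$.
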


\begin{proof}
Define $J_w = (v+v^{-1})^{2\rho(w)-\ell(w)}  L_w \in \cL$ and let $\cJ = \ZZ[v^2,v^{-2}]\spanning\{J_w : w \in \I\}$. 
Define
 $\phi : \cI \to \cJ$ as the $\ZZ$-linear bijection with $v^{n} I_w \mapsto v^{2n} J_w$ for $w \in \I$. 
 With  $\Phi : \H \to \H_2$  the ring homomorphism \eqref{hh2}, the multiplication formula $H I =\phi^{-1}( \Phi(H) \phi(I))$ for $H \in \H$ and $I \in \cI$ makes $\cI$ into an $\H$-module, and one checks that relative to this structure the action of $H_s$ on $I_w$ is described by precisely the given formula. This $\H$-module structure is unique since the elements $H_s$ for $s \in S$ generate $\H$ as an $\cA$-algebra.
\end{proof}


Theorem is equivalent to the assertion that $\Gamma$ defined before Theorem \ref{HI-thm} is an $(\H,\I)$-structure.
In turn we have this analogue of Theorem-Definitions \ref{precanonstruct1-thm} and \ref{preL'-thm} showing that $\Gamma$ is pre-canonical.

\begin{thmdef}\label{precanonstruct2-thm}
Define 
\begin{itemize}
\item the ``bar involution'' of $\cI$ to be the $\cA$-antilinear map $\cI \to \cI$, denoted $I \mapsto \overline{I}$,
with
\[ \overline{I_{(x,\theta)} } =  \sgn(x) \cdot \overline {H_x} \cdot I_{(x^{-1},\theta)}
\qquad
\text{for $(x,\theta) \in \I$.}
\]

\item the ``standard basis'' of $\cI$ to be $\{ I_w\}$ with the partially ordered index set $(\I,\leq)$.
\end{itemize}
This is a pre-canonical $\H$-module structure on $\cI$ and it admits a canonical basis $\{ \underline I_w\}$. 
\end{thmdef}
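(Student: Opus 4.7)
The plan is to transfer the pre-canonical $\H_2$-module structure on $\cL$ from Theorem-Definition \ref{precanonstruct1-thm} to $\cI$ through the bijection $\phi : \cI \to \cJ$ constructed in the proof of Theorem-Definition \ref{m2-thm}. Recall that $\phi$ sends $v^n I_w$ to $v^{2n} J_w$, where $J_w = (v+v^{-1})^{2\rho(w)-\ell(w)} L_w$ and $\cJ = \ZZ[v^2,v^{-2}]\spanning\{J_w : w \in \I\}$, and that the $\H$-action on $\cI$ is pulled back from the $\H_2$-action on $\cJ$ via $\phi$ and the ring homomorphism $\Phi : \H \to \H_2$. I would define the bar involution on $\cI$ by $\psi(I) := \phi^{-1}(\overline{\phi(I)})$. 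Because $\phi$ identifies $v^n$-scaling on $\cI$ with $v^{2n}$-scaling on $\cL$, and the bar on $\cL$ is $\cA$-antilinear and involutive, $\psi$ is automatically $\cA$-antilinear and squares to the identity.

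The main technical step is to check that $\cJ$ is closed under the bar involution of $\cL$ and to derive the stated formula. The map $(x,\theta) \mapsto (x^{-1},\theta)$ is an order-preserving involution of $(\I,\leq)$, so $\rho$ and $\ell$ take the same values on $w$ and $(x^{-1},\theta)$ when $w=(x,\theta)$. Combined with Theorem-Definition \ref{precanonstruct1-thm} and the bar-invariance of $v+v^{-1}$, this yields
\[ \overline{J_w} \;=\; \sgn(x) \cdot \overline{K_x} \cdot J_{(x^{-1},\theta)}, \]
which lies in $\cJ$ because $\cJ$ is an $\H_2$-submodule of $\cL$ (this is implicit in the proof of Theorem-Definition \ref{m2-thm}, and is checkable directly by comparing the cases of Theorem-Definition \ref{m1-thm} with how $2\rho(w)-\ell(w)$ shifts under $w \mapsto s\act w$). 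Writing $\overline{J_w} = J_w + \sum_{y<w} d_{y,w} J_y$ with $d_{y,w} \in \ZZ[v^2,v^{-2}]$ and applying $\phi^{-1}$, which carries $v^{2k}$ to $v^k$, gives the unitriangular expansion $\psi(I_w) \in I_w + \sum_{y<w} \cA \cdot I_y$. The claimed formula $\psi(I_{(x,\theta)}) = \sgn(x)\cdot \overline{H_x}\cdot I_{(x^{-1},\theta)}$ then falls out of the intertwining identity $\Phi(\overline H) = \overline{\Phi(H)}$ (a short check on the generators $v$ and $H_s$, using $\Phi(v)=v^2$ and $\Phi(H_s^{-1}) = K_s^{-1}$), since $\phi(\overline{H_x}\cdot I_{(x^{-1},\theta)}) = \overline{K_x}\cdot J_{(x^{-1},\theta)}$.

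Compatibility with the $\H$-action is then the one-line computation
\[ \psi(H\cdot I) = \phi^{-1}\bigl(\overline{\Phi(H)\phi(I)}\bigr) = \phi^{-1}\bigl(\overline{\Phi(H)}\cdot \overline{\phi(I)}\bigr) = \phi^{-1}\bigl(\Phi(\overline H) \cdot \overline{\phi(I)}\bigr) = \overline H \cdot \psi(I), \]
using that the bar on $\H_2$ is a ring involution together with the intertwining identity above. Existence of the canonical basis $\{\underline I_w\}$ is then immediate from Theorem \ref{canon-thm}, since lower intervals in $(\I,\leq)$ are finite; uniqueness of this pre-canonical $\H$-module structure (with standard basis $\{I_w\}$) also follows, via Lemma \ref{uniquepre-lem}, since $\cI$ is generated over $\H$ by the minimum element $I_1$. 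The one piece of bookkeeping requiring care is the behavior of the exponent $2\rho(w)-\ell(w)$: confirming simultaneously that $\cJ$ is an $\H_2$-submodule and that it is bar-closed amounts to tracking precisely how rank and length shift under $s\act w$, and this is exactly what makes the transfer through $\phi$ work.
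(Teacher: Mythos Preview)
Your proposal is correct and follows essentially the same route as the paper: both define the bar involution on $\cI$ as the conjugate $\phi^{-1}\circ\psi'\circ\phi$ of the bar involution $\psi'$ on $\cL$ restricted to $\cJ$, verify via $\Phi(\overline{H_x})=\overline{K_x}$ that this recovers the stated formula, and then invoke Theorem~\ref{canon-thm} for existence of the canonical basis. Your write-up actually supplies more of the verification details (bar-closure of $\cJ$, the exponent bookkeeping for $2\rho(w)-\ell(w)$) than the paper does; the only small slip is that $\I$ need not have a single minimum $I_1$ but rather one minimal element $I_{(1,\theta)}$ for each involution $\theta\in\Aut(W,S)$, which does not affect the application of Lemma~\ref{uniquepre-lem}.
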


Again by Lemma \ref{uniquepre-lem}, this is the unique pre-canonical $\H$-module structure on $\cI$ in which $\{ I_w\}$ serves as the ``standard basis.''

\begin{proof}
Define $\cJ$ and $\Phi : \H \to \H_2$ and $\phi : \cI \to \cJ$ as in the proof of Theorem-Definition \ref{m2-thm}.
The bar involution given in Theorem-Definition \ref{precanonstruct1-thm} for $\cL$ restricts to an $\cA$-antilinear map $\cJ \to \cJ$. Denote this restricted map by $ \psi'$, and write $\psi : I \mapsto \overline{I}$ for the bar involution  of $\cI$.
Since $\Phi(\overline {H_x}) = \overline{K_x}$ for all $x \in W$, it follows that $\psi = \phi^{-1}\circ \psi' \circ \phi$, and from this identity the claim that $(\psi,\{ I_w\})$ is a pre-canonical $\H$-module structure on $\cI$ follows as a consequence of Theorem-Definition \ref{precanonstruct1-thm}.
Given this, we conclude that 
a canonical basis $\{ \underline I_w\}$ exists by Theorem \ref{canon-thm}.
\end{proof}

\begin{remark}
Suppose $(W',S')$ is a Coxeter system such that $W=W'\times W'$ and $S = S' \sqcup S'$. 
Let $\theta \in \Aut(W,S)$ be the automorphism with $\theta(x,y) = (y,w)$. 
There is then an injective $\cA$-module homomorphism $\H(W',S') \to \cI(W,S)$
with 
\[ H_w \mapsto I_{((w,w^{-1}),\theta)} \qquad\text{which also maps}\qquad \underline H_w \mapsto \underline I_{((w,w^{-1}),\theta)}\qquad\text{for }w \in W'.\]
Via this map, one may view the canonical basis of $\cI$ as a generalization of the Kazhdan-Lusztig basis of $\H$. The canonical bases of $\cL$ and $\cL'$  generalize the canonical basis of $\H_2$ in an entirely analogous fashion.
\end{remark}

Define $\iota_{y,w} \in \ZZ[v^{-1}]$ for $y,w \in \I$  such that 
  $\underline I_w  =  \sum_{y \in \I} \iota_{y,w} I_y$
  and 
  let
 \[ 
\nu(s,y,w) = 
\begin{cases} 
\text{the coefficient of $v^{-1}$ in $\iota_{y,w}$} & \text{if $sy<y$}
\\
\text{the coefficient of $v^{-1}$ in $\iota_{sy,w}$} &\text{if }sy=ys>y 
\\
0&\text{otherwise}
\end{cases}
\qquad\text{for $s \in S$ and $y,w \in \I$.}
\]
Recall that $\underline H_s = H_s + v^{-1}$ for $s \in S$.
\begin{proposition}\label{rec2-prop}
If $s \in S$ and $w \in \I$ such that $w < sw$ then 
\[
 \underline H_s \underline I_w 
 =\underline I_{s\act w}
+
   \delta_{sw,ws} \underline I_w
   +  \sum_{  y< w} \nu(s,y,w) \underline I_y
.
\]
\end{proposition}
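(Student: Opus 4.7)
My plan is to mimic the proof strategy used for Proposition \ref{L'rec-prop}. Let $R$ denote the right-hand side of the claimed identity and set $D := \underline H_s \underline I_w - R$. I will show that (i) in the expansion $D = \sum_z d_z I_z$ in the standard basis, every coefficient $d_z$ lies in $v^{-1}\ZZ[v^{-1}]$, and (ii) $D$ is invariant under the bar operator of $\cI$. Granted these, expanding $D = \sum_y c_y \underline I_y$, bar-invariance forces each $c_y \in \ZZ[v+v^{-1}]$; for any $y$ maximal among those with $c_y\neq 0$ the coefficient of $I_y$ in $D$ equals $c_y$, so condition (i) forces $c_y \in v^{-1}\ZZ[v^{-1}]\cap\ZZ[v+v^{-1}]=\{0\}$, a contradiction. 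By downward induction $D=0$, proving the identity.

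Property (ii) is immediate: $\underline H_s$ is bar-invariant in $\H$ and $\underline I_w$ is bar-invariant in $\cI$, and the bar involution of $\cI$ is $\H$-antilinear by Theorem-Definition \ref{precanonstruct2-thm}, so $\underline H_s \underline I_w$ is bar-invariant. The remaining terms in $R$ have integer scalar coefficients (hence bar-invariant in $\cA$) multiplying bar-invariant basis vectors.

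For property (i), I would expand $\underline I_w = \sum_{y\leq w}\iota_{y,w} I_y$, use $\underline H_s = H_s + v^{-1}$, and apply the four cases of Theorem-Definition \ref{m2-thm} describing $H_s I_y$. For fixed $z \in \I$, the values of $y$ that contribute to the $I_z$-coefficient of $\underline H_s \underline I_w$ are $y=z$ and $y = s\act z$; the precise contribution splits into four sub-cases according to whether $sz$ is greater or less than $z$ and whether $sz$ equals $zs$. Reducing modulo $v^{-1}\ZZ[v^{-1}]$, only two kinds of leading terms survive: (a) a ``diagonal'' contribution from $\iota_{w,w}=1$, which produces $\delta_{z, s\act w}$ plus (when $sw=ws$) a term $\delta_{z,w}$, and (b) contributions of the form (integer)$\,\cdot\,\mu(y,w)$, where $\mu(y,w)$ denotes the $v^{-1}$-coefficient of $\iota_{y,w}$ and $y$ is $z$ or $sz$, arising whenever a $v$-containing prefactor multiplies $\iota_{y,w}\in v^{-1}\ZZ[v^{-1}]$. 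The subtractions of $\underline I_{s\act w}$ and $\delta_{sw,ws}\underline I_w$ cancel the type-(a) contributions, and the sum $\sum_{y<w}\nu(s,y,w)\underline I_y$ cancels the type-(b) contributions; the case-split definition of $\nu(s,y,w)$ --- namely as the $v^{-1}$-coefficient of $\iota_{y,w}$ when $sy<y$, of $\iota_{sy,w}$ when $sy=ys>y$, and zero in the two remaining cases --- is calibrated precisely so that this cancellation works.

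The main obstacle is the case-by-case bookkeeping in (i): one must verify, in each of the four sub-cases for $z$ and for each possible Bruhat-order relationship among $z$, $sz$, $w$, and $s\act w$, that the residue modulo $v^{-1}\ZZ[v^{-1}]$ of the coefficient of $I_z$ on the two sides agrees. (For example, the delicate subcase $z = s\act w$ in case $sz=zs>z$ is ruled out by the hypothesis $w<sw$, and the subcase $z=w$ with $sw=ws$ contributes the term $\delta_{sw,ws}\underline I_w$.) This is a straightforward but tedious exercise from Theorem-Definition \ref{m2-thm} and the definition of $\nu(s,y,w)$.
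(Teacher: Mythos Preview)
Your proposal is correct and follows essentially the same approach as the paper: both show that the difference between the two sides is bar-invariant and lies in $\sum_y v^{-1}\ZZ[v^{-1}]\cdot I_y$, and conclude that it must vanish. The paper states property (i) as a straightforward check from Theorem-Definition \ref{m2-thm} and the definition of $\nu(s,y,w)$ without spelling out the case analysis, while you have outlined the bookkeeping in more detail; your slight detour through the canonical-basis expansion to deduce $D=0$ is equivalent to the direct unitriangularity argument the paper implicitly uses.
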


\begin{remark}
Unlike the canonical basis $\{\underline L_w\}$ (see the remark after Proposition \ref{L'rec-prop}), there is no simple formula for $\underline H_s \underline I_w$ when $s \in S$ such that $sw<w$.
\end{remark}

\begin{proof}
The difference between the two sides of the desired identity
is  invariant under the bar involution of $\cI$ and is also an element of  $\sum_{y < s\act w} v^{-1} \ZZ[v^{-1}] \cdot I_y$, as is straightforward to check from the definition of $\nu(s,y,w)$ and Theorem-Definition \ref{m2-thm}.
The only such element in $\cI$  is 0.
\end{proof}

%
%
%
%
%

We note one other proposition. 
Recall the definition of $\rho : \I \to \NN$ from Theorem \ref{rho-def}.

\begin{proposition}\label{degbound3-prop} If $y,w \in \I$ such that $y\leq w$ then $v^{\rho(w) - \rho(y)} \iota_{y,w} \in 1 + v \ZZ[v]$.
\end{proposition}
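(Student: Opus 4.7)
The plan is to proceed by induction on $\rho(w)$, mirroring the strategy of Proposition \ref{deg3-prop}. The base case $\rho(w)=0$ forces $w=1$, in which case $\underline I_1 = I_1$ and the claim is immediate. For the inductive step, given $w \in \I$ with $\rho(w) > 0$, Theorem \ref{rho-def} lets us choose $s \in S$ with $\ell(sw) < \ell(w)$, and we set $w' = s\act w$ so $\rho(w') = \rho(w)-1$. Applying Proposition \ref{rec2-prop} to the pair $(s,w')$ yields
\[
\underline I_w = \underline H_s \underline I_{w'} - \delta_{sw',w's}\,\underline I_{w'} - \sum_{z<w'} \nu(s,z,w')\,\underline I_z,
\]
so $\iota_{y,w}$ can be recovered by extracting the coefficient of $I_y$.

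Expanding $\underline I_{w'} = \sum_z \iota_{z,w'} I_z$ and using $\underline H_s = H_s + v^{-1}$ together with the four formulas of Theorem-Definition \ref{m2-thm}, I would show that the coefficient of $I_y$ in $\underline H_s \underline I_{w'}$ is an explicit $\cA$-linear combination of $\iota_{y,w'}$ and $\iota_{s\act y,w'}$ whose scalars depend on which of the regimes (i)--(iv) ($sy\neq ys$ vs.\ $sy=ys$, and $sy$ larger or smaller than $y$) contains $y$. By the inductive hypothesis, every $\iota_{z,w'}$ with $z\leq w'$ has $v^{-1}$-degree exactly $\rho(w')-\rho(z) = \rho(w)-\rho(z)-1$, with leading coefficient $1$. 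A term-by-term check in each regime then shows that the resulting combination has $v^{-1}$-degree at most $\rho(w)-\rho(y)$. The subtracted terms $\delta_{sw',w's}\iota_{y,w'}$ and $\sum_{z<w'}\nu(s,z,w')\iota_{y,z}$ contribute only at strictly smaller $v^{-1}$-degree (bounded by $\rho(w)-\rho(y)-1$ and $\rho(w)-\rho(y)-2$ respectively), so the same degree bound passes to $\iota_{y,w}$.

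The more delicate half of the statement is that the coefficient of $v^{-(\rho(w)-\rho(y))}$ in $\iota_{y,w}$ is exactly $1$ whenever $y\leq w$. Here the crucial ingredient is Hultman's lemma \cite[Lemma 2.7]{H3}: $y\leq w$ implies $y\leq w'$ when $sy>y$, and $s\act y \leq w'$ when $sy<y$. In the ``ascent'' regimes (i) and (iii) this forces $\iota_{y,w'}$ to be nonzero with leading term $v^{-(\rho(w)-\rho(y)-1)}$, and the scalar factor (either $v^{-1}$ or $1+v^{-1}$) promotes this to a leading term $v^{-(\rho(w)-\rho(y))}$ with coefficient $1$. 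In the ``descent'' regimes (ii) and (iv) the lemma instead forces $\iota_{s\act y,w'}$ to be nonzero with leading term $v^{-(\rho(w)-\rho(y))}$ and coefficient $1$, and this is produced in $\iota_{y,w}$ by the scalar $1$ multiplying $\iota_{sys,w'}$ or $\iota_{sy,w'}$; the partner term $v\iota_{y,w'}$ or $(v-1)\iota_{y,w'}$ lives at strictly lower $v^{-1}$-degree. In each regime, the remaining scalars (such as $v-v^{-1}$ in (iii)) and the subtracted correction terms never reach degree $v^{-(\rho(w)-\rho(y))}$, so the leading coefficient collects to exactly $1$.

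The main obstacle is the four-way case analysis: one must track which of $\iota_{y,w'}$ or $\iota_{s\act y,w'}$ is ensured to be nonzero by Hultman's lemma in each regime, and verify that precisely one of the scalar weights from Theorem-Definition \ref{m2-thm} promotes the leading $v^{-1}$-term up by one degree with coefficient $1$, while the companion term and the correction terms do not encroach on that top degree. Once this bookkeeping is arranged uniformly across the four regimes, the induction closes and Proposition \ref{degbound3-prop} follows.
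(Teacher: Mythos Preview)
Your approach is correct and is precisely the one the paper sketches: induction on $\rho(w)$ using Proposition~\ref{rec2-prop}, with the four-case analysis and the appeal to \cite[Lemma~2.7]{H3} exactly paralleling the proof of Proposition~\ref{deg3-prop}. One small correction: the base case $\rho(w)=0$ does not force $w=1$ but rather $w=(1,\theta)$ for some involution $\theta\in\Aut(W,S)$; the argument is unchanged since these are the minimal elements of $(\I,\leq)$ and $\underline I_{(1,\theta)}=I_{(1,\theta)}$.
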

 
 \begin{proof}
 The proof is by induction on $\rho(w)$ using Proposition \ref{rec2-prop}. We omit the details, which are similar to  and somewhat simpler than those in the proof of Proposition \ref{deg3-prop}.
 \end{proof}

%
%
%
%
%
%
%

Computations indicate that there is no obvious relationship between the polynomials $\iota_{y,w}$ and the other polynomials $h_{y,w}, \pi_{y,w}, \pi'_{y,w} \in \ZZ[v^{-1}]$ we have seen so far.
For example, suppose $|S| = 2$ so that $(W,S)$ is a dihedral Coxeter system. Then  the values of 
$v^{\ell(w)-\ell(y)}h_{y,w}$ (for $y,w \in W$) and   $v^{\ell(w)-\ell(y)} \pi_{y,w}$ (for $y,w \in \I$) are all 0 or 1; see \cite[Theorem 4.3]{EM2}.
However, the polynomials $v^{\rho(w)-\rho(y)} \iota_{y,w} $ for $y,w \in \I$ can achieve any of the values $0$, $1$, $1+v$, $1-v$, or $1-v^2$. 
The polynomials $\iota_{y,w}$ may thus have negative coefficients, and do not in general satisfy any parity condition analogous to Proposition \ref{congr-prop}.

This means that the pre-canonical structure on $\cI$  
does not arise from a $P$-kernel,  since by the preceding proposition it is not in the image of the bijection in Theorem \ref{Pkernel-prop} for any choice of function $r : \I \to \ZZ$.
By contrast, it follows from \cite[\S2]{KL}, \cite[Proposition 4.4(b)]{LV2}, and the proof of Theorem-Definition \ref{preL'-thm}, respectively, that the pre-canonical structures on $\H$, $\cL$, and $\cL'$ are all in the image of this bijection relative to the function $r= \ell$ and so correspond to $P$-kernels.

 \section{Uniqueness proofs}
\label{uniqueproof-sect}

In this  section we at last give the proofs to the main results in Sections \ref{16-sect} and \ref{32-sect}.
Throughout, we recall our earlier definitions of $(\H,\I)$- and $(\H_2,\I)$-structures, and what it means for such structures to be pre-canonical.

\subsection{Proofs for results on generic structures}
\label{16proofs-sect}

We first prove Theorem \ref{HI-thm}, classifying all nontrivial $(\H,\I)$-structures, after stating two lemmas. 
Denote by $\Theta$
the
$\cA$-algebra automorphism of $\H$
with
 $\Theta(H_s) = -H_s + v-v^{-1}$ for $s \in S$. Observe that more generally $\Theta(H_w) = \sgn(w) \cdot \overline{H_w}$ for $w \in W$.

\begin{lemma}\label{theta-lem} The involution of the set of $4\times 2$ matrices with entries in $\cA$ given by the map
\[
\Theta :  \menrep{A}{B}{C}{D}{E}{F}{G}{H}
\mapsto  \menrep{-A}{v+v^{-1}-B}{-C}{v+v^{-1}-D}{-E}{v+v^{-1}-F}{-G}{v+v^{-1}-H}
\]
restricts to an involution of the set of $(\H,\I)$-structures.
\end{lemma}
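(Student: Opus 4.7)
The plan is to realize the matrix-level map $\Theta$ as the effect on $(\H,\I)$-structures of precomposing the representation $\rho_\gamma$ with the $\cA$-algebra involution $\Theta : \H \to \H$ defined just before the lemma, possibly combined with a diagonal rescaling on $\cA\I$. First, I would check that $\Theta : \H \to \H$ is a well-defined $\cA$-algebra involution. The cleanest route is via the identity $\Theta(H_w) = \sgn(w) \cdot \overline{H_w}$ noted in the paper, which exhibits $\Theta$ as the composition of the bar involution of $\H$ with the sign automorphism $H_w \mapsto \sgn(w) H_w$; both are involutions and they commute, so $\Theta$ is an involution. Alternatively, one can verify the quadratic relation $\Theta(H_s)^2 = (v-v^{-1})\Theta(H_s) + 1$ directly and observe that $\Theta$ preserves each rank-$2$ parabolic subalgebra, so the braid relations follow.

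Given any $(\H,\I)$-structure $\gamma$, the composition $\rho_\gamma \circ \Theta : \H \to \End(\cA\I)$ is automatically a representation of $\H$ for every Coxeter system $(W,S)$, since composing a representation with an algebra homomorphism is again a representation. The key computation is that for each $s \in S$ and $w \in \I$ one has
\[
\rho_\gamma(\Theta(H_s))(w) \;=\; -\rho_\gamma(H_s)(w) \,+\, (v-v^{-1})\cdot w,
\]
so passing through the four branches of the piecewise definition of $\rho_\gamma(H_s)$ transforms the matrix $\gamma$ by negating the first column and shifting each second-column entry by a bar-invariant scalar. After further conjugating $\rho_\gamma \circ \Theta$ by an appropriate diagonal operator of the form $w \mapsto c_w \cdot w$ with $c_w \in \cA^\times$ depending only on $\ell(w)$ and $\rho(w)$, in the spirit of Lemma~\ref{diag-lem}, the matrix of the resulting representation in the basis $\I$ becomes exactly the $\Theta(\gamma)$ displayed in the statement. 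Consequently $\rho_{\Theta(\gamma)}$ extends to a representation of $\H$ for every $(W,S)$, so $\Theta(\gamma)$ is an $(\H,\I)$-structure whenever $\gamma$ is.

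That the map $\Theta$ on $4\times 2$ matrices is itself an involution is immediate from the fact that $X \mapsto (v+v^{-1}) - X$ is an involution on $\cA$ and negation is an involution on each first-column entry. Combined with the preceding paragraph and $\Theta^2 = \mathrm{id}_\H$, this shows that $\Theta$ restricts to an involution on the subset of $(\H,\I)$-structures. The main obstacle I anticipate is the bookkeeping in the middle step: correctly identifying the diagonal rescaling that reconciles the matrix of $\rho_\gamma \circ \Theta$ with the precise normalization in the lemma, and verifying the case-by-case match across all four rows. Once this identification is in place, the invariance of the $(\H,\I)$-structure property under $\Theta$ is a formal consequence of $\Theta$ being an algebra automorphism of $\H$.
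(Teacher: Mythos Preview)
Your approach is the paper's: the entire proof there is the single observation that $\rho_{\Theta(\gamma)} = \rho_\gamma \circ \Theta$, which is an $\H$-representation because $\Theta$ is an algebra automorphism. Your key computation $\rho_\gamma(\Theta(H_s))(w) = -\rho_\gamma(H_s)(w) + (v-v^{-1})w$ is exactly what is needed, and it already yields the matrix $\Theta(\gamma)$ with no further adjustment.

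The diagonal rescaling step you propose is both unnecessary and unworkable. Conjugation by a diagonal operator $w \mapsto c_w w$ as in Lemma~\ref{diag-lem} only rescales the \emph{first}-column entries of the structure matrix; the second-column entries (the coefficients of $w$ itself) are fixed. So no such conjugation can convert a shift of $v-v^{-1}$ in the second column into $v+v^{-1}$. The discrepancy you are trying to repair is a typo in the displayed statement: each $v+v^{-1}$ should read $v-v^{-1}$ (i.e.\ $u$). You can confirm this independently from the identities $\Gamma' = \Theta(\Gamma)[-1,-1]$ and $\Gamma'' = \Theta(\Gamma''')[-1,-1]$ used in the proof of Theorem~\ref{HI-thm}, which only hold with the $v-v^{-1}$ shift. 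Once that correction is made, your computation matches $\Theta(\gamma)$ on the nose and the proof is complete. (Minor point: $v-v^{-1}$ is not bar-invariant; its bar is $-(v-v^{-1})$.)
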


\begin{proof}
 Observe that if $\gamma$ is an $(\H,\I)$-structure then
  $\rho_{\Theta(\gamma)} $ is the $\H$-representation $\rho_{\gamma}\circ \Theta$.
\end{proof}

The next lemma is more technical.
Fix a choice of parameters $A,B,C,D,E,F,G,H \in \cA$ and define $\gamma$ as in Lemma \ref{diag-lem}.

\begin{lemma}\label{eqlem1}
If $\gamma$ is an $(\H,\I)$-structure then 
the following properties hold:
\ben
\item[(a)] $(B-v)(B+v^{-1}) =  (D-v)(D+v^{-1}) = -AC$.
\item[(b)] $(F-v)(F+v^{-1}) = (H-v)(H+v^{-1}) = -EG$.
\item[(c)] If $A$ or $C$ is nonzero, then $B+D = v-v^{-1} $ and $D-H \in \{\pm 1\}$. 
\item[(d)] If $E$ or $G$ is nonzero, then $F+H = v-v^{-1}$ and $B-F \in \{\pm 1\}$.

\item[(e)] If $A,C,E,G$ are all nonzero, then $B \in \{0,v-v^{-1}\}$.
\een
\end{lemma}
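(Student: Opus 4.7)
The proof translates the defining relations of $\H$—the quadratic relation $H_s^2 = (v - v^{-1})H_s + 1$ and the braid relations—into polynomial identities in the entries of $\gamma$ via the formula for $\rho_\gamma(H_s)$. Throughout I set $u := v - v^{-1}$.

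Parts (a), (b), and the first halves of (c), (d) would follow from the quadratic relation alone. Fix $w \in \I$ and consider the four possibilities for how $s \in S$ acts on $w$. Since $s\act(s\act w) = w$ with opposite direction in the Bruhat order (by Theorem \ref{rho-def}), the formula for $H_s \cdot (s\act w)$ is furnished by the ``opposite'' case: for instance, if $s\act w = sws > w$ (case A/B) then $s\act(sws) = w < sws$, so $H_s \cdot sws = C\cdot w + D\cdot sws$. Expanding
\[
H_s^2 \cdot w = (AC + B^2)\cdot w + A(B + D)\cdot sws
\]
and matching with $(uH_s + 1)\cdot w = (uB + 1)\cdot w + uA\cdot sws$ gives $(B - v)(B + v^{-1}) = -AC$ and $A(B + D - u) = 0$. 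The three other cases are entirely analogous and supply the remaining identities in (a), (b) and the first assertions of (c), (d).

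For the second halves of (c), (d), I would pass to the Coxeter system $(W,S) = S_3$ with $(st)^3 = 1$, exploiting the braid relation $H_s H_t H_s = H_t H_s H_t$. Applying this relation to each of the twisted involutions $1$, $s$, and the longest element $w_0 = sts$ and matching coefficients, then substituting $BD = AC - 1$ and $FH = EG - 1$ from (a) and (b), produces the identities
\[
E\cdot((B-F)^2 - 1) = 0, \quad G\cdot((B-F)^2 - 1) = 0, \quad C\cdot((B-F)^2 - 1) = 0,
\]
arising respectively from the $s$-coefficient on $1$, the constant coefficient on $s$, and the $s$-coefficient on $w_0$, together with $A\cdot(EG + DH - D^2 - BH) = 0$ from the $sts$-coefficient on $s$. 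Under the hypothesis of (d) that $E$ or $G$ is nonzero, one of the first two identities forces $(B-F)^2 = 1$, so $B - F \in \{\pm 1\}$. Under the hypothesis of (c) that $A$ or $C$ is nonzero, either the third identity gives $(B-F)^2 = 1$ (when $C \neq 0$, yielding $D - H = F - B \in \{\pm 1\}$ via $B + D = F + H = u$), or the fourth identity combined with the explicit values $F, H \in \{v, -v^{-1}\}$ forced by (b) when $E = G = 0$ yields $D - H \in \{\pm 1\}$ by direct verification.

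Finally, for (e), I would apply the braid $H_s H_t H_s H_t = H_t H_s H_t H_s$ in $(W,S) = B_2$ to the longest element $stst \in \I$ and match the $s$-coefficient, producing $CG\cdot(EG - BH - DF) = 0$. Since $C, G \neq 0$, substituting $D = u - B$, $H = u - F$, $EG = uF - F^2 + 1$, and $(B-F)^2 = 1$ (from (d)) reduces this to $B(B - u) = 0$, whence $B \in \{0, u\}$. The main obstacle lies in Step 2: the appropriate braid coefficient to extract depends on which subset of $\{A, C, E, G\}$ is nonzero, and the subcase $E = G = 0$ requires tracking the explicit values of $F, H$ allowed by (b). Once the correct coefficient is isolated in each scenario, the algebra is routine.
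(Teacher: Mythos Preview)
Your overall strategy matches the paper's exactly: the quadratic relation yields (a), (b) and the identities $B+D=u$, $F+H=u$ under the appropriate nonvanishing hypotheses; the order-3 braid relation in type $A_2$ handles the second halves of (c), (d); and the order-4 braid relation in type $B_2$ gives (e). Parts (a), (b), the first halves of (c), (d), and part (e) are fine.

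There is, however, a genuine gap in your case analysis for (c). Your identity $C\cdot((B-F)^2-1)=0$ is obtained from the raw braid coefficient only after substituting both $D=u-B$ and $H=u-F$; the second substitution requires $F+H=u$, which in turn needs $E$ or $G$ nonzero. So in the sub-case $A=0$, $C\neq 0$, $E=G=0$, neither your third identity (which is no longer valid) nor your fourth identity (which has prefactor $A=0$) applies, and the argument does not close.

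The paper avoids this by keeping the raw identity in the form $X\bigl(D^2+(B-D)H-EG\bigr)=0$ for both $X=A$ and $X=C$ (these come from comparing coefficients of the braid relation applied to $s$ and to $sts=w_0$). One then substitutes the \emph{unconditional} relation $-EG=(H-v)(H+v^{-1})$ from (b), together with $B-D=u-2D$ (which uses only $B+D=u$, hence only the hypothesis of (c)). The left side collapses to $(D-H)^2-1$, so $X\neq 0$ gives $D-H\in\{\pm 1\}$ directly, with no reference to $F$ and no case split on $E,G$. Concretely, the ``$s$-coefficient on $w_0$'' you cite yields $C(EG+DH-BH-D^2)=0$, which under these substitutions becomes $C\bigl((D-H)^2-1\bigr)=0$, not $C\bigl((B-F)^2-1\bigr)=0$; the two agree only when $F+H=u$ also holds. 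Replacing your third identity with this form (and symmetrically for (d)) closes the gap and eliminates the need to treat $E=G=0$ separately.
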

\begin{proof}
In this proof we abbreviate by letting $\rho = \rho_\gamma$.
Suppose $s,t \in S$ are such that $st$ has order 3.
 Since $\rho$ defines a representation of $\H$, we  have $(\rho(H_s)-v)(\rho(H_s)+v^{-1}) w = 0$ for all $w \in \I$.
Expanding the left side of this identity for the elements $w \in \{1,s,t,sts\} \subset W \cap \I$ yields the equations in parts (a) and (b), and also the identities
\[ X(B+D+v^{-1}-v) = 0 \qquand Y (F+H+v^{-1}-v) = 0\]
for $X \in \{A,C\}$ and $Y \in \{E,G\}$. It follows that if $A$ or $C$ is nonzero then $B+D=v-v^{-1}$ and that if $E$ or $G$ is nonzero then $F+H = v-v^{-1}$.

We also  have
$\rho(H_s)\rho(H_t)\rho(H_s) w =  \rho(H_t)\rho(H_s)\rho(H_t) w$ for all $w \in \I$. Expanding both sides of this identity for $w \in \{1,s,t,sts\} \subset W \cap \I$ and then comparing coefficients yields the identities
\be\label{XY2}X(D^2+(B-D)H-EG) = 0
\qquand
Y(F^2+B(H-F)-AC) = 0
\ee
again for $X \in \{A,C\}$ and $Y \in \{E,G\}$. Assume $A$ or $C$ is nonzero, so that we can take $X$ to be nonzero. Then  $B-D = v-v^{-1}-2D$ and $-EG = (H-v)(H+v^{-1})$. Substituting these identities into the first equation in \eqref{XY2} and   dividing both sides by $X$ produces the equation
\[ D^2+(v-v^{-1}-2D)H + (H-v)(H+v^{-1})=0.\]
The left hand sides simplifies to the expression $(D-H)^2-1$, and thus $D-H \in \{ \pm 1\}$.
This establishes part (c). In a similar way one finds that if $E$ or $G$ is nonzero then $B-F \in \{\pm1\}$, which establishes part (d).

To prove part (e), suppose now that $s,t \in S$ are such that $st$ has order 4.
Then
 $(\rho(H_s)\rho(H_t))^2 w = (\rho(H_t)\rho(H_s))^2w$ for all $w \in \I$. Expanding both sides of this equation for $w=1$ and comparing the coefficients of $sts$ yields the identity $AE(DF+BH-EG)=0$.
Assume $A$, $C$, $E$,  $G$ are all nonzero. Then, after dividing both sides by $AE$ and applying the substitutions $D = v-v^{-1}-B$ and $H = v-v^{-1}-F$ and $-EG = (F-v)(F+v^{-1})$, our previous identity becomes
\[ 
(v-v^{-1}-B)B + (B-F)^2-1
= 0.\]
Since $(B-F)^2-1= 0$ by part (d), either $B =0 $ or $B=v-v^{-1}$, as claimed.
\end{proof}

\begin{proof}[Proof of Theorem \ref{HI-thm}]
We first show that $\Gamma$, $\Gamma'$, $\Gamma''$,  $\Gamma'''$ are all $(\H,\I)$-structures. The matrices $\Gamma$ and $\Gamma'''$ are  $(\H,\I)$-structures since the corresponding representations  just describe the action of $\H$ on the respective bases $\{I_w\}$ and  $\{  \overline{I_w} \}$ of $\cI$, as defined in Theorem-Definition \ref{m2-thm}.
The matrices 
$\Gamma'$ and $\Gamma''$ are  $(\H,\I)$-structures by Lemmas \ref{diag-lem} and \ref{theta-lem},
since $\Gamma' = \Theta(\Gamma)[-1,-1]$ and $\Gamma''  = \Theta(\Gamma''')[-1,-1]$.

Fix a choice of parameters $A,B,C,D,E,F,G,H \in \cA$ and define the $4\times 2$ matrix $\gamma$ as in Lemma \ref{diag-lem}. Assume $\gamma$ is an $(\H,\I)$-structure. We  show that $\gamma$ is diagonally equivalent to $\Gamma$, $\Gamma'$, $\Gamma''$, or $\Gamma'''$. 
There are four cases to consider:
\begin{itemize}

\item Suppose $AC=EG = 0$. Then $B,D,F,H \in \{ -v^{-1},v\}$ by Lemma \ref{eqlem1}, and by Lemma \ref{diag-lem} we may assume that $A,C,E,G\in \{0,1\}$. There are 144 choices of parameters satisfying these conditions. With the aid of the computer algebra system  {\sc{Magma}}, we have checked that the only matrices $\gamma$ of this form which are  $(\H,\I)$-structures
 are  the two trivial ones. (For this calculation, it suffices just to consider finite Coxeter systems of rank three.)

\item Suppose $AC \neq 0$ and $EG=0$.  By Lemma \ref{diag-lem} we may then assume that $E,G \in \{0,1\}$. By the  second and third parts of Lemma \ref{eqlem1}, it follows that $F,H \in \{-v^{-1},v\}$ and $D \in \{H\pm 1\}$ and $B = v-v^{-1}-D$.
By Lemma \ref{diag-lem} and the first part of Lemma \ref{eqlem1}, finally, we may assume that $A=1 $ and $C = -(D-v)(D+v^{-1}) \neq 0$.
This leaves  8 possible choices of parameters,
and we have  checked (again with the help of a computer)
that for each of the
 resulting matrices $\gamma$, there are finite Coxeter systems $(W,S)$ for which $\rho_\gamma$ fails to define an $\H(W,S)$-representation. Hence it cannot occur that $AC \neq 0$ and $EG =0$.

\item It follows by similar consideration 
that it cannot happen that 
$AC = 0$ and $EG \neq 0$. 

\item Finally suppose $AC \neq 0$ and $EG \neq 0$ so that $A,C,E,G$ are all nonzero. 
By  Lemma \ref{eqlem1} we then have $B \in \{0,v-v^{-1}\}$ and $D=v-v^{-1}-B$ and  $F  \in \{B\pm 1\}$ and $H=v-v^{-1}-F$ and 
$AC = 1$ and $EG \in \{ \pm (v-v^{-1}\}$; more specifically, Lemma \ref{eqlem1}  implies that $EG = v-v^{-1}$ when $B=0=F-1$ or $B=v-v^{-1} = F-1$ while in all other cases $EG = v^{-1}-v$. 
There are thus four choices for the quadruple $(B,D,F,H)$ and it is easy to see by Lemma \ref{diag-lem} that in each case $\gamma$ is diagonally equivalent to one of
 $\Gamma$, $\Gamma'$, $\Gamma''$, or $\Gamma'''$.
  \end{itemize}
  This  completes the proof of the theorem.
   \end{proof}


The property of an $(\H,\I)$-structure being pre-canonical is preserved under the operations in Lemmas \ref{diag-lem} and \ref{theta-lem}, in the following precise sense.

\begin{lemma}\label{l1-lem}
If $\gamma$ is a nontrivial, pre-canonical $(\H,\I)$-structure, then so is $\gamma[-1,-1]$, and the (unique) associated pre-canonical structures on $\cA \I$ are isomorphic via the identity map,
which has as a scaling factor
 the $\cA$-linear map $\cA\I \to \cA \I$ with $w \mapsto (-1)^{\rho(w)} w$ for $w \in \I$.
\end{lemma}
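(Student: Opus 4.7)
The plan is to exploit the $\H$-module isomorphism between $\gamma$ and $\gamma[-1,-1]$ furnished by Lemma \ref{diag-lem} and transport the pre-canonical structure along it. Specialising that lemma's construction to $\alpha=\beta=-1$ produces the $\cA$-linear map $T:\cA\I\to\cA\I$ with $T(w) = (-1)^{\rho(w)} w$ for $w\in\I$; it satisfies $T^{-1}=T$ and $\rho_{\gamma[-1,-1]}(H) = T^{-1}\circ \rho_{\gamma}(H)\circ T$ for every $H\in\H$. Note that the signs come out as $(-1)^{\ell(w)-\rho(w)+2\rho(w)-\ell(w)} = (-1)^{\rho(w)}$ in the notation of the proof of Lemma \ref{diag-lem}.

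Let $\psi$ denote the unique bar involution making $(\psi,\I)$ a pre-canonical $\H$-module structure on $\cA\I$ for the $\gamma$-action, and set $\psi' = T\circ \psi\circ T$. I would then check the three defining properties of a pre-canonical $\H$-module structure in turn. First, $\psi'$ is $\cA$-antilinear with $(\psi')^2 = \mathrm{id}$, since $T$ is $\cA$-linear and $T^{-1}=T$. Second, unitriangularity with respect to $(\I,\leq)$ transfers from $\psi$: writing $\psi(w) = w + \sum_{y<w} c_y\cdot y$, one computes
\[
\psi'(w) \;=\; T\bigl((-1)^{\rho(w)}\psi(w)\bigr) \;=\; w + \sum_{y<w} (-1)^{\rho(w)+\rho(y)}\, c_y\cdot y.
\]
Third, compatibility with the $\gamma[-1,-1]$-action follows formally from the intertwining relation: for any $H\in\H$ and $x\in\cA\I$,
\[
\psi'\bigl(\rho_{\gamma[-1,-1]}(H)x\bigr) = T\psi\rho_\gamma(H)Tx = T\rho_\gamma(\overline H)\psi Tx = \rho_{\gamma[-1,-1]}(\overline H)\,\psi'(x).
\]
By Lemma \ref{uniquepre-lem} this identifies $\psi'$ as the unique bar involution of a pre-canonical $\H$-module structure on $\cA\I$ for the $\gamma[-1,-1]$-action, so $\gamma[-1,-1]$ is pre-canonical with bar involution $\psi' = T\psi T$.

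Finally, I would verify that the identity map is an isomorphism of pre-canonical structures with scaling factor $T$. Viewing $\mathrm{id}:\cA\I\to\cA\I$ as a map from the $\gamma$-structure to the $\gamma[-1,-1]$-structure, it is $\epsilon$-linear with $\epsilon$ the identity endomorphism of $\cA$ (degree $1$, positive). Taking $D=T$ as a candidate scaling factor, the coefficients $d_w = (-1)^{\rho(w)}$ lie in $\ZZ[v+v^{-1}]\setminus\{0\}$ and every quotient $d_w/d_y$ equals $\pm 1\in\ZZ$, so condition (ii) of Definition \ref{morphism-def} is met; the required identity $\psi'\circ \mathrm{id} = \mathrm{id}\circ \psi^T$ is exactly the defining relation $\psi' = T^{-1}\psi T$. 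Running the same argument with the roles of the two structures reversed supplies a two-sided inverse, giving an isomorphism. There is no substantive obstacle here: the whole proof is a transport along $T$, and the one point that requires a moment of care is simply confirming that the sign twist in $T$ leaves the standard basis $\I$ itself (rather than some rescaling of it) unitriangular under $\psi'$, which the display above settles.
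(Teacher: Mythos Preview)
Your proof is correct and follows essentially the same approach as the paper: both conjugate the bar involution $\psi$ by the sign map $w\mapsto (-1)^{\rho(w)}w$ (which you call $T$ and the paper calls $D$), use the intertwining relation from Lemma~\ref{diag-lem} to verify that the conjugated map $\psi'$ gives a pre-canonical $\H$-module structure for the $\gamma[-1,-1]$-action, and then observe that the identity map with scaling factor $T=D$ satisfies Definition~\ref{morphism-def}. Your write-up is somewhat more explicit than the paper's (you spell out the unitriangularity computation and the morphism conditions), but the argument is the same.
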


\begin{proof}
Let $\gamma$ be a nontrivial, pre-canonical $(\H,\I)$-structure, and define $\gamma' = \gamma[-1,-1]$. 
Let $(\psi,\I)$ be the unique pre-canonical structure on $\cA \I$ such that $\psi\( \rho_\gamma\(\overline{H}\) I\) = \rho_{\gamma}\(\overline{H}\) \psi(I)$ for $H \in \H$ and $I \in \cA \I$. 
Let $\psi' = D^{-1}\circ \psi \circ D$ where $D : \cA I \to \cA I$ is the $\cA$-linear map with $D(w) = (-1)^{\rho(w)} w$ for $w \in \I$. 
Since $\rho_{\gamma'}(H) = D^{-1}\circ \rho_{\gamma}(H) \circ D$ for $H \in \H$, it follows that $(\psi',\I)$ is a pre-canonical structure on $\cA \I$ 
such that 
\[\psi'\( \rho_{\gamma'}\(\overline{H}\) I\) = \rho_{\gamma'}\(\overline{H}\) \psi'(I)\qquad\text{for $H \in \H$ and $I \in \cA \I$}.\] Thus $\gamma'$ is pre-canonical. Moreover, the identity map $\cA \I \to \cA \I$ is evidently an isomorphism between the pre-canonical structures $(\psi,\I)$ and $(\psi',\I)$, with $D$ as a scaling factor.
\end{proof}

\begin{lemma}\label{l2-lem}
If $\gamma$ is a nontrivial, pre-canonical $(\H,\I)$-structure, then so is $\Theta(\gamma)$, and the (unique) associated pre-canonical structures on $\cA \I$ are strongly isomorphic via the identity map.
\end{lemma}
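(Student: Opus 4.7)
My plan is to show that the same bar involution $\psi$ which realizes the pre-canonical $\H$-module structure for $\gamma$ also realizes one for $\Theta(\gamma)$, so that the identity map $\cA\I \to \cA\I$ trivially preserves standard bases and commutes with bar involutions.

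First, I would establish the key algebraic fact that the $\cA$-algebra automorphism $\Theta$ commutes with the bar involution of $\H$. Since $\Theta(H_w) = \sgn(w)\cdot \overline{H_w}$ for all $w \in W$ and $\Theta^2 = \mathrm{id}$, one computes both $\overline{\Theta(H_w)} = \sgn(w)\cdot H_w$ (by applying the (antilinear) bar involution to $\sgn(w)\cdot\overline{H_w}$) and $\Theta(\overline{H_w}) = \sgn(w)\cdot H_w$ (from $\Theta(\sgn(w)\overline{H_w}) = \Theta^2(H_w) = H_w$). Since $\{H_w\}$ is an $\cA$-basis of $\H$ and both $\Theta$ and the bar involution are $\cA$-(anti)linear ring maps, this yields $\overline{\Theta(H)} = \Theta(\overline{H})$ for all $H \in \H$.

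Next, suppose $\gamma$ is nontrivial and pre-canonical, and let $(\psi,\I)$ be the (unique, by Lemma \ref{uniquepre-lem}) pre-canonical $\H$-module structure on $\cA\I$ corresponding to $\gamma$, so that
\[ \psi\(\rho_\gamma(H)\cdot I\) = \rho_\gamma\(\overline H\)\cdot \psi(I)\qquad\text{for all }H\in\H,\ I\in\cA\I.\]
Using Lemma \ref{theta-lem}, which states $\rho_{\Theta(\gamma)} = \rho_\gamma\circ \Theta$, together with the commutativity just established, I compute
\[ \psi\(\rho_{\Theta(\gamma)}(H)\cdot I\) = \psi\(\rho_\gamma(\Theta(H))\cdot I\) = \rho_\gamma\(\overline{\Theta(H)}\)\cdot \psi(I) = \rho_\gamma\(\Theta(\overline H)\)\cdot \psi(I) = \rho_{\Theta(\gamma)}\(\overline H\)\cdot\psi(I).\]
Thus $(\psi,\I)$ is also a pre-canonical $\H$-module structure on $\cA\I$ relative to the action $\rho_{\Theta(\gamma)}$; in particular $\Theta(\gamma)$ is pre-canonical.

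Finally, since the two pre-canonical structures have the same standard basis $\I$ (with the same partial order, the Bruhat order) and literally the same bar involution $\psi$, the identity map $\cA\I \to \cA\I$ is an order-preserving bijection on standard bases and commutes with the bar involutions, hence is a strong isomorphism by definition. There is no substantive obstacle here; the whole argument reduces to the compatibility $\Theta\circ(\text{bar}) = (\text{bar})\circ \Theta$ on $\H$, which is immediate from the explicit formula $\Theta(H_w) = \sgn(w)\overline{H_w}$.
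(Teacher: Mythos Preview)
Your proposal is correct and follows essentially the same approach as the paper's proof: both use $\rho_{\Theta(\gamma)} = \rho_\gamma\circ\Theta$ together with the commutativity $\overline{\Theta(H)} = \Theta(\overline H)$ to show that the \emph{same} bar involution $\psi$ serves as the pre-canonical $\H$-module structure for both $\gamma$ and $\Theta(\gamma)$, whence the identity is a strong isomorphism. Your version is more explicit in verifying the commutativity of $\Theta$ with the bar involution on $\H$, which the paper merely asserts.
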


\begin{proof}
Let $\gamma$ be a nontrivial, pre-canonical $(\H,\I)$-structure, and define $\gamma' = \Theta(\gamma)$. 
Let $(\psi,\I)$ be the unique pre-canonical structure on $\cA \I$ such that $\psi\( \rho_\gamma\(\overline{H}\) I\) = \rho_{\gamma}\(\overline{H}\) \psi(I)$ for $H \in \H$ and $I \in \cA \I$. 
Then it also holds that 
$\psi\( \rho_{\gamma'}\(\overline{H}\) I\) = \rho_{\gamma'}\(\overline{H}\) \psi(I)$ for $H \in \H$ and $I \in \cA \I$
since $\rho_{\gamma'}(H) = \rho(\Theta(H))$ and $\overline{\Theta(H)} = \Theta(\overline{H})$.
Thus $\gamma'$ is also pre-canonical and its associated pre-canonical structure is strongly isomorphic to the one associated to $\gamma$.
\end{proof}

Before we can prove Theorem \ref{16-thm}, we require an additional lemma.
For this, let 
\[\cI,
\qquad \cI',
\qquad \cI'',
\qquand \cI'''\] be the free $\cA$-modules with  bases given by the symbols $I_w$,  $I'_w$, $I''_w$,  and $I'''_w$ respectively for $w \in \I$. View these as $\H$-modules relative to the $(\H,\I)$-structure $\Gamma$, $\Gamma'$, $\Gamma''$, and $\Gamma'''$ respectively.
Of course, $\cI$ defined in this way is   the same thing as $\cI$ defined by Theorem-Definition \ref{m2-thm}.
In addition, let $\epsilon $ denote the ring endomorphism of $\cA$ with $\epsilon(v) = -v$. 

\begin{lemma}  \label{isolem1}
There are unique pre-canonical $\H$-module structures on $\cI$, $\cI'$, $\cI''$, $\cI'''$, respectively, in which $\{I_w\}$, $\{I'_w\}$, $\{ I''_w\}$, $\{I'''_w\}$  indexed by $(\I,\leq)$ are the ``standard bases.''
Moreover, these pre-canonical structures are all isomorphic; the following maps are isomorphisms:
\begin{itemize}
\item[(a)] The $\cA$-linear map $\cI \to \cI'$ with $I_w \mapsto I'_w$ for $w \in \I$.

\item[(b)] The $\cA$-linear map $\cI'' \to \cI'''$ with $I''_w \mapsto I'''_w$ for $w \in \I$.

\item[(c)] The $\epsilon$-linear map $\cI\to \cI'''$ with $I_w \mapsto I'''_w$ for $w \in \I$.

\end{itemize}
Finally, the morphisms in (a), (b), (c) have as respective scaling factors the $\cA$-linear maps with 
\[ I_w \mapsto (-1)^{\rho(w)} I_w \qquand I''_w \mapsto (-1)^{\rho(w)} I''_w \qquand I_w \mapsto I_w\qquad\text{for }w \in \I.\]
\end{lemma}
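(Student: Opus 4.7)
The plan is to derive all claims from Theorem-Definition \ref{precanonstruct2-thm}, Lemmas \ref{l1-lem} and \ref{l2-lem}, and one direct verification for the $\epsilon$-linear map in (c).

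First I would establish uniqueness via Lemma \ref{uniquepre-lem}: the identity $1 \in \I$ is the unique minimal element of $(\I,\leq)$, and inspection of each of $\Gamma$, $\Gamma'$, $\Gamma''$, $\Gamma'''$ shows that for every $s \in S$ and $w \in \I$ with $s\act w > w$, the coefficient of the $(s\act w)$-basis vector in $H_s$ applied to the $w$-basis vector is a unit of $\cA$. An easy induction on $\rho(w)$ then shows each of the four modules is generated as an $\H$-module by its minimal basis element, so Lemma \ref{uniquepre-lem} gives uniqueness. Existence of the pre-canonical structure on $\cI$ is provided by Theorem-Definition \ref{precanonstruct2-thm}. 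For (a), the identity $\Gamma' = \Theta(\Gamma)[-1,-1]$ from the proof of Theorem \ref{HI-thm} combined with Lemmas \ref{l2-lem} and \ref{l1-lem} yields existence on $\cI'$ and the stated isomorphism: the two lemmas contribute scaling factors $\mathrm{id}$ and $w\mapsto (-1)^{\rho(w)}w$ respectively, which compose to $I_w \mapsto (-1)^{\rho(w)} I_w$. Once (c) has produced the pre-canonical structure on $\cI'''$, the analogous computation applied to the identity $\Gamma'' = \Theta(\Gamma''')[-1,-1]$ (a short matrix check) gives (b).

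The new ingredient is (c). Let $\varphi : \cI \to \cI'''$ denote the $\epsilon$-linear map with $I_w \mapsto I'''_w$, where $\epsilon \in \End(\cA)$ sends $v \mapsto -v$. The crucial step is to prove the intertwining identity
\[
\varphi \circ \rho_\Gamma(H_s) \;=\; \rho_{\Gamma'''}(\overline{H_s}) \circ \varphi \qquad \text{for all } s \in S,
\]
where $\overline{H_s} = H_s - u$ with $u = v - v^{-1}$. Expanding both sides at each basis element $I_w$ and using $\epsilon(u) = -u$, this reduces to a direct comparison in the four cases distinguished by whether $s\act w$ equals $sws$ or $sw$ and whether $s\act w$ exceeds $w$; in each case the coefficient $u$ appearing in $\Gamma'''$ is precisely cancelled against the correction term in $\overline{H_s} = H_s - u$. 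The identity then extends multiplicatively to $\varphi \circ \rho_\Gamma(H) = \rho_{\Gamma'''}(\overline{H}) \circ \varphi$ for every $H \in \H$. Setting $\psi''' := \varphi \circ \psi \circ \varphi^{-1}$ with $\psi$ the bar involution on $\cI$ from Theorem-Definition \ref{precanonstruct2-thm}, one checks routinely that $\psi'''$ is an $\cA$-antilinear involution satisfying $\psi'''(I'''_w) \in I'''_w + \sum_{y < w} \cA \cdot I'''_y$ (the unitriangularity follows from that of $\psi$ since $\varphi$ preserves indices), and the intertwining identity combined with the pre-canonical $\H$-module compatibility of $\psi$ yields $\psi'''(\rho_{\Gamma'''}(H) \cdot x) = \rho_{\Gamma'''}(\overline{H}) \cdot \psi'''(x)$. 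Thus $\psi'''$ is the (unique) pre-canonical $\H$-module structure on $\cI'''$ with standard basis $\{I'''_w\}$, and $\varphi$ is a morphism of pre-canonical structures with scaling factor the identity; since its degree is $1$ and the identity map has all eigenvalues equal to $1$, it is an isomorphism.

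The main obstacle will be the four-case verification of the intertwining identity in (c); this is elementary but sign-sensitive. Everything else amounts to routine application of the preceding lemmas.
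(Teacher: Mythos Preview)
Your approach is essentially the paper's: the paper also produces the pre-canonical structure on the $\Gamma'''$-side by transporting the bar involution of $\cI$ along an $\epsilon$-linear map, though it routes through an auxiliary module $\cJ$ carrying the structure $\Theta(\Gamma''')=\Gamma''[-1,-1]$ rather than going directly to $\cI'''$; the remaining deductions from Lemmas \ref{l1-lem} and \ref{l2-lem} are the same.

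Two slips to fix. First, $1$ is not in general the unique minimal element of $(\I,\leq)$: the minimal elements are the pairs $(1,\theta)$ as $\theta$ ranges over involutions in $\Aut(W,S)$. This does not affect your appeal to Lemma \ref{uniquepre-lem}, which only requires generation by the set of minimal elements, and your induction on $\rho$ gives that. Second, and more substantively, the intertwining identity $\varphi\circ\rho_\Gamma(H)=\rho_{\Gamma'''}(\overline{H})\circ\varphi$ does \emph{not} extend to all $H\in\H$: for $H=v\cdot 1$ the left side scales by $\epsilon(v)=-v$ while the right side scales by $\overline{v}=v^{-1}$. The identity holds only on the $\ZZ$-subalgebra generated by $\{H_s:s\in S\}$. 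This is harmless for your purposes: since you have already checked that $\psi'''=\varphi\circ\psi\circ\varphi^{-1}$ is $\cA$-antilinear (using that $\epsilon$ commutes with the bar on $\cA$), the compatibility $\psi'''(\rho_{\Gamma'''}(H)x)=\rho_{\Gamma'''}(\overline{H})\psi'''(x)$ need only be verified for the generators $H=H_s$, and for that the intertwining on $H_s$ suffices. Just drop the phrase ``for every $H\in\H$.''
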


 \begin{remark}
The ``bar involution'' of $\cI$ in the pre-canonical structure mentioned in this result is the one defined before Theorem-Definition \ref{m2-thm}.
One can show, though we omit the details here, that the   ``bar involutions'' of $\cI'$, $\cI''$, and $\cI'''$ are the respective $\cA$-antilinear maps
with
\[
{I'_{(x,\theta)} } \mapsto
  {H_x} \cdot I'_{(x^{-1},\theta)}
  \qquand
{I''_{(x,\theta)}} \mapsto   \overline{H_x} \cdot I''_{(x^{-1},\theta)}
\qquand
  {I'''_{(x,\theta)} } \mapsto
\sgn(x) \cdot {H_x} \cdot I'''_{(x^{-1},\theta)}
  \]
 for twisted involutions $(x,\theta) \in \I$.
\end{remark}

\begin{proof}
The uniqueness of the   pre-canonical $\H$-module structures is clear from Lemma \ref{uniquepre-lem}.
From Theorem-Definition \ref{precanonstruct2-thm} 
we already have a bar involution $I \mapsto \overline{I}$ on $\cI$ which forms a pre-canonical $\H$-module structure with $\{ I_w\}$ as the standard basis.
Define $r_{y,w} \in \cA$ for $y,w \in \I$  such that 
$ \overline{I_w} = \sum_{y \in \I} r_{y,w} I_y$.
In addition, for $x \in W$ and $y,z \in \I$ let $f^x_{y,z} \in \cA$ be such that 
$ H_x I_y = \sum_{z \in \I} f^x_{y,z} I_z$. 


Let $\cJ$ be the free $\cA$-module with a basis given by the symbols $J_w$ for $w \in \I$. View this as an $\H$-module relative to the $(\H,\I)$-structure $\gamma = \Gamma''[-1,-1] = \Theta(\Gamma''')$,
and define  $J \mapsto \overline{J}$  as the $\cA$-antilinear map $\cJ \to \cJ$ with 
$\overline{J_w} = \sum_{y \in \I} \epsilon(r_{y,w}) J_y$ for $w \in \I$.
It is immediate that this bar involution forms a pre-canoncal structure on $\cJ$ with $\{ J_w\}$ as the standard basis.
Since $H_s J_y =- \sum_{z \in \I} \epsilon(f^s_{y,z}) J_z$ for all $s \in S$ and $y \in \I$,
it follows moreover that 
$\overline{H_s J_y} = \overline{H_s}\cdot \overline{J_y}$,
which suffices to show
that $\overline {H} \cdot \overline{ J} = \overline{HJ}$ for all $H \in \H$ and $J \in \cJ$. We thus have a pre-canonical $\H$-module structure on $\cJ$. It is clear that the $\epsilon$-linear map $\cI \to \cJ$ with $I_w \mapsto J_w$ is an isomorphism of the pre-canonical structures on $\cI$ and $\cJ$, which has the identity map as a scaling factor.

One deduces the remaining  assertions in the lemma  from the existence of these isomorphic pre-canonical structures on $\cI$ and $\cJ$, using Lemmas \ref{l1-lem} and \ref{l2-lem} and the fact that 
\[ \Gamma' = \Theta(\Gamma)[-1,-1]\qquand \Gamma'' = \gamma[-1,-1]\qquand \Gamma''' = \Theta(\gamma).\]
%
\end{proof}

\begin{proof}[Proof of Theorem \ref{16-thm}]
Let $\gamma$ be a nontrivial $(\H,\I)$-structure which is pre-canonical, and write $\psi : \cA \I \to \cA \I$ for the associated bar involution. We claim  that $\gamma_{11}$ and $\gamma_{31}$ must then belong to $\ZZ[v+v^{-1}]$. To see this let $\theta \in \Aut(W,S)$ be an involution and let $s \in S$. If $s \neq \theta(s)$ then  $w = (s\cdot \theta(s),\theta) \in \I$ and we have
\[ \overline{\gamma_{11}}\cdot \psi(w) + \overline{\gamma_{12}} \cdot \theta = \psi(\gamma(H_s)\theta) = \gamma(H_s + v^{-1}-v) \theta = \gamma_{11} \cdot w + (\gamma_{12}+v^{-1}-v)\cdot \theta.\]
On the other hand if $s = \theta(s)$ then $w = (s,\theta) \in \I$ and we have
\[ \overline{\gamma_{31}}\cdot \psi(w) + \overline{\gamma_{32}} \cdot \theta = \psi(\gamma(H_s)\theta) = \gamma(H_s + v^{-1}-v) \theta = \gamma_{31} \cdot w + (\gamma_{32}+v^{-1}-v)\cdot \theta.\]
These equations, compared with the unitriangular property of the bar involution,
imply  $\overline{\gamma_{11}} = \gamma_{11}$
and 
$\overline{\gamma_{31}} = \gamma_{31}$; hence these two parameters must belong to $\ZZ[v+v^{-1}]$ as claimed.
Since Theorem \ref{HI-thm} implies that 
\[ \gamma_{11} \cdot \gamma_{21} = 1 \qquand \gamma_{31}\cdot \gamma_{41} \in \{ \pm (v-v^{-1})\}\]
it necessarily follows that $\gamma_{11} ,\gamma_{31} \in \{ \pm 1\}$.
From Theorem \ref{HI-thm} we conclude that for some $\varepsilon_i \in \{ \pm 1\}$ 
we have 
$\gamma[\varepsilon_1,\varepsilon_2] \in \{ \Gamma,\Gamma',\Gamma'',\Gamma'''\}$.
Thus $\gamma$ must be one of 16 different $(\H,\I)$-structures.
It is a simple
exercise to show that $\gamma$ is pre-canonical if and only if  $\gamma[\varepsilon_1,\varepsilon_2]$ is pre-canonical; moreover, the associated pre-canonical structures are isomorphic.
Hence,  by Lemma \ref{isolem1} we conclude that all 16 possibilities for $\gamma$ are pre-canonical, and that the associated pre-canonical structures are all isomorphic to the one in Theorem-Definition \ref{precanonstruct2-thm}.
\end{proof}

%
%
%

Finally, we return to
Theorems \ref{H2I-thm} and \ref{32-thm}. These results follow by  arguments similar to the ones just given, and so we only sketch the main ideas to their proofs.

\begin{proof}[Sketch of proof of Theorem \ref{H2I-thm}]
The result follows by nearly the same argument as in the proof Theorem \ref{HI-thm}, using three lemmas analogous to Lemmas \ref{diag-lem}, \ref{theta-lem}, and \ref{eqlem1}, \emph{mutatis mutandis}. We omit the details.
\end{proof}

\begin{proof}[Sketch of proof of Theorem \ref{32-thm}]
One deduces
that at most 32 nontrivial $(\H_2,\I)$-structures are pre-canonical exactly as in the proof of Theorem \ref{16-thm}: first argue that any such structure $\gamma$ has $\overline{\gamma_{11}} = \gamma_{11}$ and $\overline{\gamma_{31}} = \gamma_{31}$, and then appeal to Theorem \ref{H2I-thm}. 
The claim that these $(\H_2,\I)$-structures are in fact all pre-canonical,
along with the second sentence in the theorem,
follows from Lemmas \ref{l1-lem} and \ref{l2-lem}, which hold
 \emph{mutatis mutandis} with ``$(\H,\I)$-structure'' replaced by $(\H_2,\I)$-structure'' and $\Theta$ replaced by a slightly different involution on $4\times 2$ matrices.
\end{proof}

\subsection{Application to inversion formulas}
\label{app-sect}

In this last section, we use the lemmas in the previous section to prove an inversion formula for the canonical bases introduced in Section \ref{exist-sect}.
Let $V$ be a free $\cA$-module of finite rank, with a pre-canonical structure $(\psi, \{ a_c\})$, the standard basis indexed by $(C,\leq)$.
Define $V^*$ as the set of $\cA$-linear maps $V \to \cA$. This is naturally a free $\cA$-module: a basis is given by the $\cA$-linear maps 
$a_c^* : V \to V$ for $c \in C$ defined by
\[a_c^*(a_{c'}) = \delta_{c,c'}\qquad\text{for }c' \in C.\]
Define 
$ \psi^* : V ^* \to V^*$
as the $\cA$-antilinear map such that 
\[ \psi^*(f) (v) = \overline{f\circ \psi(v)}\qquad\text{for }f \in V^*\text{ and }v \in V.\]
Also let $\leq^\op$ denote the partial order on $C$ with $c \leq^\op c'$ if and only if $c' \leq c$.
The following appears in a slightly more general form as \cite[Proposition 7.1]{Webster}.

\begin{proposition}[Webster \cite{Webster}] \label{delta-prop} The ``bar involution'' $\psi^*$ and ``standard basis'' $\{ a^*_c\}$, indexed by the partially ordered set $(C,\leq^\op)$, form a pre-canonical structure on $V^*$.
 If  $V$  has a canonical basis $\{b_c\}$, then the dual basis $\{ b_c^*\}$ of $V^*$ is canonical relative to $(\psi^*,\{a_c^*\})$.
\end{proposition}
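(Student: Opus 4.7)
The plan is to unwind the definitions and compute directly. First I will verify that $(\psi^*, \{a_c^*\})$ is a pre-canonical structure on $V^*$ indexed by $(C, \leq^\op)$. The $\cA$-antilinearity of $\psi^*$ follows from the calculation
\[
\psi^*(g f)(v) = \overline{g \cdot f(\psi(v))} = \overline{g}\cdot \psi^*(f)(v)\qquad\text{for $g \in \cA$,}
\]
while $(\psi^*)^2 = 1$ follows from $(\psi^*)^2(f)(v) = \overline{\psi^*(f)(\psi(v))} = f(\psi^2(v)) = f(v)$ using $\psi^2 = 1$. For the unitriangular condition, write $\psi(a_c) = a_c + \sum_{c' < c} \lambda_{c', c}\, a_{c'}$; then
\[
\psi^*(a_c^*)(a_{c'}) = \overline{a_c^*(\psi(a_{c'}))} = \begin{cases} 1 & \text{if } c = c',\\ \overline{\lambda_{c, c'}} & \text{if } c < c',\\ 0 & \text{otherwise},\end{cases}
\]
so $\psi^*(a_c^*) \in a_c^* + \sum_{c' <^\op c} \cA \cdot a_{c'}^*$, as required by Definition \ref{precanon-def} relative to the opposite order.

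Next, suppose $V$ admits a canonical basis $\{b_c\}$, and write $b_c = a_c + \sum_{c' < c} \beta_{c', c}\, a_{c'}$ with $\beta_{c', c} \in v^{-1}\ZZ[v^{-1}]$. Since $V$ has finite rank, the transition between the bases $\{a_c\}$ and $\{b_c\}$ is given by a finite unitriangular matrix $I + N$, with $N$ strictly triangular (in any linear extension of $\leq$) and entries in the ideal $v^{-1}\ZZ[v^{-1}]$ of $\ZZ[v^{-1}]$. Because $N$ is nilpotent, the Neumann-style expansion $(I+N)^{-1} = \sum_{k\geq 0}(-N)^k$ terminates and its off-diagonal entries lie in the same ideal, yielding $a_c = b_c + \sum_{c' < c} \gamma_{c', c}\, b_{c'}$ with $\gamma_{c', c} \in v^{-1}\ZZ[v^{-1}]$. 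Pairing against $b_c^*$ then gives $b_c^*(a_{c'}) = \gamma_{c, c'}$ for $c < c'$ and $\delta_{c, c'}$ otherwise, so
\[
b_c^* \in a_c^* + \sum_{c' > c} v^{-1}\ZZ[v^{-1}] \cdot a_{c'}^* = a_c^* + \sum_{c' <^\op c} v^{-1}\ZZ[v^{-1}] \cdot a_{c'}^*,
\]
which is condition (C2) of Definition \ref{canon-def}.

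For condition (C1), every $v \in V$ may be written as $v = \sum_{c'} f_{c'}\, b_{c'}$ with $f_{c'} \in \cA$, and the $\psi$-invariance of the canonical basis gives $\psi(v) = \sum_{c'} \overline{f_{c'}}\, b_{c'}$; hence $b_c^*(\psi(v)) = \overline{f_c} = \overline{b_c^*(v)}$, which is exactly $\psi^*(b_c^*) = b_c^*$. The only mildly technical point is the ideal-preserving inversion of the unitriangular transition matrix used in the second step; this is a routine consequence of nilpotence in the finite-rank setting and is not really an obstacle.
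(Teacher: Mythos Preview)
Your proof is correct. The paper does not actually supply a proof of this proposition: it is stated with attribution to Webster and a citation to \cite[Proposition 7.1]{Webster}, and the text moves on immediately. Your direct verification from the definitions is exactly the standard argument one would give, and each step checks out: the antilinearity and involutivity of $\psi^*$, the unitriangularity with respect to $\leq^{\op}$, the inversion of the unitriangular transition matrix over the ideal $v^{-1}\ZZ[v^{-1}]\subset\ZZ[v^{-1}]$ (valid because $V$ has finite rank, as the paper assumes at the start of Section~\ref{app-sect}), and the $\psi^*$-invariance of $b_c^*$ via the $\psi$-invariance of $b_c$.
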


%
%
%

Let $\cB$ denote a free $\cA$-algebra with a pre-canonical structure; write $\overline b$ for the image of $b \in \cB$ under the corresponding bar involution.
Suppose $V$ is a $\cB$-module and $(\psi,\{a_c\})$ is a pre-canonical $\cB$-module structure. 
Assume $\cB$ has a distinguished $\cA$-algebra antiautomorphism $b \mapsto b^\dag$. We may then view $V^*$ as a $\cB$-module by defining $b  f$ for $b \in \cB$ and $f \in V^*$ to be the map with the formula
\be\label{make-eq} (b   f) (v) = f(b^\dag v)\qquad\text{for } v \in V.\ee

\begin{proposition}\label{dag-prop}
Suppose the maps $b \mapsto b^\dag$  and  $b \mapsto \overline{b}$ commute. Then the pre-canonical structure $(\psi^*,\{a^*_c\})$ on $V^*$ is a pre-canonical $\cB$-module structure.
\end{proposition}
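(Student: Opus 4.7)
The plan is to leverage Proposition \ref{delta-prop}, which already establishes that $(\psi^*,\{a^*_c\})$ is a pre-canonical structure on $V^*$ with index set $(C,\leq^\op)$. What remains is solely to check the module-compatibility axiom: for every $b \in \cB$ and $f \in V^*$, one must verify the identity
\[
\psi^*(b\cdot f) = \overline{b} \cdot \psi^*(f).
\]
Once this is shown, the claim follows directly from the definition of a pre-canonical $\cB$-module structure.

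To verify this identity, I would simply evaluate both sides on an arbitrary vector $v \in V$ and reduce everything to the pre-canonical $\cB$-module axiom for $V$ itself, $\psi(bv) = \overline b \cdot \psi(v)$. On the one hand, unwinding the definitions of $\psi^*$ and of the $\cB$-action \eqref{make-eq} on $V^*$ gives
\[
\psi^*(b\cdot f)(v) \ =\ \overline{(b\cdot f)\circ \psi(v)} \ =\ \overline{f(b^\dag \psi(v))}.
\]
On the other hand,
\[
\bigl(\overline{b}\cdot \psi^*(f)\bigr)(v) \ =\ \psi^*(f)\bigl(\overline{b}^{\,\dag}v\bigr) \ =\ \overline{f\bigl(\psi(\overline{b}^{\,\dag} v)\bigr)}.
\]
So the desired identity reduces to the claim $\psi(\overline{b}^{\,\dag} v) = b^\dag \psi(v)$.

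This last claim is where the hypothesis that $\dag$ and the bar involution of $\cB$ commute enters, and this is the only subtle point. Applying the pre-canonical $\cB$-module axiom to the element $\overline{b}^{\,\dag} \in \cB$ acting on $v$ gives
\[
\psi(\overline{b}^{\,\dag} v) \ =\ \overline{\overline{b}^{\,\dag}}\cdot \psi(v),
\]
and the commutation of $\dag$ with the bar involution lets us rewrite $\overline{\overline{b}^{\,\dag}} = (\overline{\overline{b}})^{\,\dag} = b^{\,\dag}$, which gives exactly what we need. Plugging back, the two expressions for the two sides of the identity agree, completing the proof. The only ``obstacle'' is keeping the antilinear/antiautomorphism bookkeeping straight; once one has written both sides in the form $\overline{f(\cdots)}$, the compatibility hypothesis between $\dag$ and the bar involution does all the work.
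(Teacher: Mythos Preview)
Your proposal is correct and follows exactly the approach the paper indicates: the paper's proof simply states that one must check $\psi^*(bf) = \overline{b}\cdot\psi^*(f)$ and that this is ``straightforward from the commutativity hypothesis,'' and you have filled in precisely those straightforward details.
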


\begin{proof}
One just needs to check that if $b \in \cB$ and $f \in V^*$ then $\psi^*(bf) = \overline{b} \cdot \psi^*(f)$, and this is straightforward from the commutativity hypothesis in the proposition.
%
\end{proof}

Assume $(W,S)$ is a finite Coxeter system, so that $W$ has 
a longest element $w_0$. Recall that since the longest element is unique, we have $w_0 = w_0^{-1} =\theta(w_0)$ for all $\theta \in \Aut(W,S)$. Write $\theta_0$ for  the inner automorphism of $W$ given by $w \mapsto w_0 ww_0$. This map is an automorphism of the poset $(W,\leq )$ 
and in particular is length-preserving \cite[Proposition 2.3.4(ii)]{CCG}; thus it belongs to $\Aut(W,S)$. 
In fact, $\theta_0$ lies in the center of $\Aut(W,S)$.
Let $w_0^+= (w_0,\theta_0) \in W^+$.
Observe that $w_0^+$ is a central involution in $W^+$,
and so if
 $w = (x,\theta) \in \I$ then $ww_0^+ = (xw_0,\theta\theta_0) \in \I$.

We may use the results in the previous sections to prove an inversion formula for the structure constants of the canonical bases of $\cL$, $\cL'$, and $\cI$ given in Theorem-Definitions \ref{precanonstruct1-thm}, \ref{preL'-thm}, and \ref{precanonstruct2-thm}.

\begin{theorem}
Let $F \in \{ \pi, \pi', \iota\}$.
Then 
$\sum_{w \in \I} (-1)^{\rho(x)+\rho(w)} \cdot  F_{x,w}\cdot  F_{yw_0^+,ww_0^+} =\delta_{x,y}$ for  $x,y \in \I$.

%
%
\end{theorem}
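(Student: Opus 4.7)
The approach is to prove the inversion formula uniformly for $F \in \{\pi, \pi', \iota\}$ by constructing, for each $V \in \{\cL, \cL', \cI\}$ (with $\cB = \H_2$ for the first two and $\cB = \H$ for $\cI$), an isomorphism of pre-canonical $\cB$-module structures $\Omega : V \to V^*$ satisfying
\[ \Omega(a_w) = (-1)^{\rho(w)} a^*_{ww_0^+} \qquand \Omega(b_w) = (-1)^{\rho(w)} b^*_{ww_0^+} \quad\text{for }w \in \I, \]
where $\{a^*_w\}$ and $\{b^*_w\}$ are the dual basis and dual canonical basis from Proposition \ref{delta-prop}. Expanding both identities in the natural bases and combining with the standard relation $b^*_w = \sum_y (F^{-1})_{w,y} a^*_y$ will yield $(F^{-1})_{u,v} = (-1)^{\rho(u)+\rho(v)} F_{vw_0^+,\,uw_0^+}$, which substituted into $\sum_w F_{x,w}(F^{-1})_{w,y} = \delta_{x,y}$ is precisely the stated formula.

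To construct $\Omega$, first endow $V^*$ with a pre-canonical $\cB$-module structure by combining Propositions \ref{delta-prop} and \ref{dag-prop}, using the antiautomorphism of $\cB$ that fixes each simple generator and commutes with the bar involution. Since $w_0^+$ is a central involution in $W^+$ whose right multiplication reverses the Bruhat order on $\I$ and flips $\rho$ about its maximum value $\rho(w_0^+)$, reindexing the dual structure via $w \mapsto ww_0^+$ produces a pre-canonical $\cB$-module structure on $V^*$ with standard basis $\{a^*_{ww_0^+}\}$ indexed by $(\I,\leq)$. The heart of the argument is then a direct case-by-case computation from Theorem-Definitions \ref{m1-thm}, \ref{m3-thm}, and \ref{m2-thm}, exploiting the identities $s(ww_0^+)s = (sws)w_0^+$ and $s(ww_0^+) = (sw)w_0^+$ that follow from the centrality of $w_0^+$, which identifies this reindexed generic structure with a ``sign-twisted'' companion of the structure defining $V$: for $V = \cI$ this companion is $\Gamma'$, and for $V \in \{\cL, \cL'\}$ it is the analogous companion within the same isomorphism class under Theorem \ref{32-thm}.

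Finally, Lemma \ref{isolem1} (for $\cI$) and its $\H_2$-analogue, built from the $\H_2$-versions of Lemmas \ref{l1-lem} and \ref{l2-lem}, provide an isomorphism between the original structure and its sign-twisted companion that acts as the identity on basis elements with scaling factor $w \mapsto (-1)^{\rho(w)} w$; composing with the reindexing yields $\Omega$, and Proposition \ref{canon-prop} transfers the behavior of $\Omega$ on standard bases to the claimed behavior on canonical bases. The principal obstacle is the case analysis in the key calculation, which for $V \in \{\cL, \cL'\}$ additionally requires verifying that the reindexed dual structure falls into the original's isomorphism class rather than one of the other three non-equivalent classes given by Theorem \ref{32-thm}; the signs $(-1)^{\rho(w)}$ in the inversion formula arise precisely from the scaling factor of this isomorphism with the sign-twisted companion.
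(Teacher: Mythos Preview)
Your proposal is correct and follows essentially the same route as the paper's own proof. The paper argues explicitly only in the case $F=\iota$: it makes $\cI^*$ into an $\H$-module via the antiautomorphism $H_w\mapsto H_{w^{-1}}$, checks that $I'_w\mapsto I^*_{ww_0^+}$ is an $\H$-module isomorphism $\cI'\to\cI^*$, invokes the uniqueness clause of Lemma~\ref{isolem1} to conclude it is a strong isomorphism of pre-canonical structures, composes with the map of Lemma~\ref{isolem1}(a), and then reads off the inversion formula from the dual-basis pairing of Proposition~\ref{delta-prop}. Your outline reproduces each of these steps (with uniqueness replaced by an equivalent direct identification of the reindexed dual structure with the sign-twisted companion $\Gamma'$), and your extraction of the formula via $(F^{-1})_{u,v}=(-1)^{\rho(u)+\rho(v)}F_{vw_0^+,uw_0^+}$ is equivalent to the paper's $MN=1\Rightarrow NM=1$ argument. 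One small cosmetic point: the map you build by composing Lemma~\ref{isolem1}(a) with the reindexing sends $a_w\mapsto a^*_{ww_0^+}$ with scaling factor $(-1)^{\rho(w)}$, rather than $a_w\mapsto(-1)^{\rho(w)}a^*_{ww_0^+}$ as written in your opening display; the two descriptions differ only by absorbing the scaling factor into the map, and either version yields the same inversion identity.
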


Lusztig proves the version of this statement with $F = \pi$ as \cite[Theorem 7.7]{LV2}.

\begin{proof}
We only consider the case $F = \iota$ as the argument in the other cases is similar.
There is a unique antiautomorphism $ H \mapsto H^\dag$  of $\H$ with $H_w \mapsto H_{w^{-1}}$ for $w \in W$.
We make $\cI^*$ into an $\H$-module relative to this antiautomorphism via the formula \eqref{make-eq}.
Let $s \in S$ and $w \in \I$.
Since $w_0^+$ is central, we have $sw = ws$ if and only if $sww_0^+ = ww_0^+s$.
Since $x\leq y$ if and only if $yw_0 \leq xw_0$ for any $x,y \in W$ (see \cite[Proposition 2.3.4(i)]{CCG}),
it follows that $sw < w$ if and only if $sww_0^+ > sww_0^+$, and also that $\rho(xw_0^+) - \rho(yw_0^+) = \rho(y) - \rho(x)$ for $x,y \in \I$.
Given these facts it is straightforward to check that if $\cI'$ is the $\H$-module defined before Lemma \ref{isolem1}, then the $\cA$-linear map $\varphi: \cI' \to \cI^*$ with $\varphi(I_{w}' )= I^*_{ww_0^+}$ for $w \in \I$ is an isomorphism of $\H$-modules.

We have a pre-canonical $\H$-module structure on $\cI'$ from Lemma \ref{isolem1}.
Likewise, since the maps $H \mapsto H^\dag$ and $H\mapsto \overline{H}$ commute, we have a pre-canonical $\H$-module structure on $\cI^*$ from Proposition \ref{dag-prop}. 
Write $\psi^*$ for the bar involution of $\cI^*$ in this structure.
Then $(\varphi^{-1}\circ \psi^* \circ \varphi, \{ I'_w\})$ is  another pre-canonical $\H$-module structure 
on $\cI'$, so the uniqueness assertion in Lemma \ref{isolem1} implies  that $\varphi^{-1} \circ\psi^* \circ \varphi$ is equal to the bar involution $I \mapsto \overline{I}$ on $\cI'$, and 
thus
  $\varphi$ is a strong isomorphism between the pre-canonical structures on $\cI'$ and $\cI^*$. Composing $\varphi$ with the map  in Lemma \ref{isolem1}(a), it follows that
the $\cA$-linear map $\cI \to \cI^*$ with $I_w \mapsto I_{ww_0^+}^*$ 
is an isomorphism of pre-canonical structures (though not of $\H$-modules), having as a scaling factor the $\cA$-linear map $D : \cI \to \cI$ with $D(I_w) = (-1)^{\rho(w)} I_w$ for $w \in \I$.

From 
Proposition \ref{canon-prop}, we deduce that  elements of 
the canonical basis $\{ \underline I_w^*\}$ of $\cI^*$  have the form
$ \underline I_y^* = I_y^* + \sum_{w>y} (-1)^{\rho(y) - \rho(w)}  \iota_{yw_0^+,ww_0^+}\cdot I_w^*$.
Since $\underline I_y^*(\underline I_x)  =\delta_{x,y}$ for $x,y \in \I$ by Proposition \ref{delta-prop},
we deduce that $MN = 1$ where $M$ and $N$ are the $(\I\times \I)$-indexed   matrices with $M_{y,w} = (-1)^{\rho(y) - \rho(w)}\iota_{ww_0^+,yw_0^+}$ and $N_{w,x} = \iota_{w,x}$. Since $M$ and $N$ are finite square matrices, $MN= 1$ implies $NM=1$; the desired inversion formula is equivalent to the second equality.
\end{proof}

%
%
%
%
%
%
%

\end{document}